\theoremstyle{plain}
\newtheorem{thm}{Theorem}[section]
\newtheorem*{thm*}{Theorem}
\newtheorem*{cor*}{Corollary}
\newtheorem*{defn*}{Definition}
\newtheorem{prop}[thm]{Proposition}
\newtheorem{lem}[thm]{Lemma}
\newtheorem{cor}[thm]{Corollary}
\newtheorem{claim}{Claim}
\newtheorem*{claim*}{Claim}
\theoremstyle{definition}
\newtheorem{defn}[thm]{Definition}
\newtheorem{ex}[thm]{Example}
\newtheorem{rem}[thm]{Remark}
\newtheorem{fact}[thm]{Fact}
\newtheorem{conj}[thm]{Conjecture}
\newtheorem{setting}[thm]{Setting}
\newtheorem{condition}[thm]{Condition}
\theoremstyle{remark}
\theoremstyle{remark}
\newtheorem*{ac}{Acknowledgments}
\numberwithin{equation}{thm}
\newcommand{\rmc}{\mathrm{c}}
\newcommand{\rme}{\mathrm{e}}
\newcommand{\rmr}{\mathrm{r}}
\newcommand{\rmv}{\mathrm{v}}
\newcommand{\rmK}{\mathrm{K}}
\newcommand{\rmQ}{\mathrm{Q}}
\newcommand{\rmT}{\mathrm{T}}
\newcommand{\calC}{\mathcal{C}}
\newcommand{\calF}{\mathcal{F}}
\newcommand{\calM}{\mathcal{M}}
\newcommand{\calS}{\mathcal{S}}
\newcommand{\fkc}{\mathfrak{c}}
\newcommand{\fkm}{\mathfrak{m}}
\newcommand{\fkn}{\mathfrak{n}}
\newcommand{\fkp}{\mathfrak{p}}
\def\Spec{\operatorname{Spec}}
\def\Hom{\operatorname{Hom}}
\def\Max{\operatorname{Max}}
\def\Ext{\operatorname{Ext}}
\def\Tor{\operatorname{Tor}}
\def\Ker{\operatorname{Ker}}
\def\Im{\operatorname{Im}}
\def\a{\mathfrak a}
\def\c{\mathfrak c}
\def\e{\operatorname{e}}
\def\m{\mathfrak m}
\def\n{\mathfrak n}
\begin{document}

\setlength{\baselineskip}{14.95pt}

\title{Huneke--Wiegand conjecture and change of rings}
\pagestyle{plain}
\author{Shiro Goto}
\address{Department of Mathematics, School of Science and Technology, Meiji University, 1-1-1 Higashi-mita, Tama-ku, Kawasaki 214-8571, Japan}
\email{goto@math.meiji.ac.jp}
\author{Ryo Takahashi}
\address{Graduate School of Mathematics, Nagoya University, Furocho, Chikusaku, Nagoya 464-8602, Japan}
\email{takahashi@math.nagoya-u.ac.jp}
\urladdr{http://www.math.nagoya-u.ac.jp/~takahashi/}
\author{Naoki Taniguchi}
\address{Department of Mathematics, School of Science and Technology, Meiji University, 1-1-1 Higashi-mita, Tama-ku, Kawasaki 214-8571, Japan}\email{taniguti@math.meiji.ac.jp}
\author{Hoang Le Truong}
\address{Institute of Mathematics, Vietnam Academy of Science and Technology, 18 Hoang Quoc Viet Road, 10307 Hanoi, Vietnam}
\email{hltruong@math.ac.vn}
\thanks{2010 {\em Mathematics Subject Classification.} 13C12, 13H10, 13H15}
\thanks{{\em Key words and phrases.} torsionfree, Cohen--Macaulay ring, Gorenstein ring, multiplicity, numerical semigroup ring, canonical module} 
\thanks{The first author was partially supported by JSPS Grant-in-Aid for Scientific Research (C) 25400051. The second author was partially supported by JSPS Grant-in-Aid for Scientific Research (C) 25400038}
\begin{abstract}
Let $R$ be a Cohen--Macaulay local ring of dimension one with a canonical module $\rmK_R$.
Let $I$ be a faithful ideal of $R$.
We explore the problem of when $I \otimes_RI^{\vee}$ is torsionfree, where $I^{\vee} = \Hom_R(I, \rmK_R)$.
We prove that if $R$ has multiplicity at most $6$, then $I$ is isomorphic to $R$ or $\rmK_R$ as an $R$-module, once $I\otimes_RI^{\vee}$ is torsionfree.
This result is applied to monomial ideals of numerical semigroup rings.
A higher dimensional assertion is also discussed.
\end{abstract}

\maketitle




\section{Introduction}

Let $M$ and $N$ be finitely generated modules over an integral domain $R$ and assume that both modules $M$ and $N$ are torsionfree. The destination of this research is to get an answer for the question of when the tensor product $M\otimes_RN$ is torsionfree. Our interest dates back to the following conjecture.

\begin{conj}[Huneke--Wiegand conjecture \cite{HW}]\label{HWC}
Let $R$ be a Gorenstein local domain. Let $M$ be a maximal Cohen--Macaulay $R$-module. If $M \otimes_R \Hom_R(M, R)$ is torsionfree, then $M$ is free.
\end{conj}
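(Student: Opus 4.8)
The plan is to deduce Conjecture~\ref{HWC} from its one-dimensional case and then to attack that case by induction on the multiplicity, in the spirit of the argument that the body of this paper carries out completely for $\e(R)\le 6$. For the reduction to $\dim R=1$: assume $\dim R=d\ge 2$ and that $M\otimes_R\Hom_R(M,R)$ is torsionfree. Since $M$ is maximal Cohen--Macaulay over the Gorenstein ring $R$, we have $\Ext^i_R(M,R)=0$ for all $i>0$, so $\Hom_R(M,R)$ is again maximal Cohen--Macaulay and its formation commutes with reduction modulo a regular element. Choosing $x\in\m$ superficial for $R$ and simultaneously a nonzerodivisor on $R$, $M$, $\Hom_R(M,R)$ and $M\otimes_R\Hom_R(M,R)$, and passing to $\overline R=R/xR$ and $\overline M=M/xM$, base change gives that $\overline M\otimes_{\overline R}\Hom_{\overline R}(\overline M,\overline R)$ is torsionfree over the Gorenstein ring $\overline R$ of dimension $d-1$, with $\overline M$ maximal Cohen--Macaulay. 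Iterating --- and arranging along the way, as in \cite{HW}, to remain within the class of rings for which the one-dimensional argument applies --- it suffices to treat $d=1$. There, splitting off free summands (a direct summand of a torsionfree module is torsionfree) and using that a rank-one torsionfree module over a one-dimensional Cohen--Macaulay ring is an ideal, the assertion becomes: if $I$ is a faithful $\m$-primary ideal of a one-dimensional Gorenstein local ring $R$ and $I\otimes_R I^{-1}$ is torsionfree, where $I^{-1}=\Hom_R(I,R)$, then $I$ is principal. The higher-rank-to-rank-one passage is the delicate point here and is itself part of what a complete proof must supply.

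Next, a criterion for torsionfreeness. With $I$ as above, $I$ and $I^{-1}$ are faithful torsionfree modules of rank one. The multiplication map $\mu\colon I\otimes_R I^{-1}\to I\,I^{-1}\subseteq R$ becomes an isomorphism after localization at the minimal prime of $R$ and has torsionfree target, so $\Ker\mu$ is precisely the torsion submodule of $I\otimes_R I^{-1}$. Hence $I\otimes_R I^{-1}$ is torsionfree if and only if $\mu$ is injective, and --- by a standard computation with a free presentation of $I^{-1}$ --- this holds if and only if $\Tor_1^R(I,I^{-1})=0$. So the conjecture reduces to: the vanishing of $\Tor_1^R(I,I^{-1})$ forces $I$ to be principal. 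This is where Gorenstein duality enters: $R\cong\rmK_R$, so $I^{-1}=I^{\vee}=\Hom_R(I,\rmK_R)$, and $I$ and $I^{\vee}$ are linked through $\rmK_R$; one wants non-principality of $I$ to force nonzero torsion in $I\otimes_R I^{\vee}$.

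To run the induction I would use the birational module-finite extension $B=\Hom_R(I,I)$ of $R$ inside its total quotient ring: $B$ is again one-dimensional Cohen--Macaulay, $\Hom_R(B,R)$ is a canonical module of $B$, and $B\supsetneq R$ unless $I$ is stable. When $B\supsetneq R$, one pushes the hypothesis up to $B$ using $\Hom_R(I,R)\cong\Hom_B(IB,\Hom_R(B,R))$; the singularity degree has strictly dropped, so by induction $IB$ is principal over $B$, and it remains to descend this across the conductor $R:B$ to conclude $I\cong R$. When $B=R$, $I$ is stable, and one attacks $\Tor_1^R(I,I^{-1})$ directly from the minimal free resolution of $I$ in this ``maximally non-Gorenstein'' situation.

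The \textbf{main obstacle} is that this induction does not visibly close. Enlarging $R$ to $B$ destroys the Gorenstein hypothesis, so one is forced to prove at once the stronger statement studied in this paper --- that $I\otimes_R I^{\vee}$ torsionfree forces $I\cong R$ or $I\cong\rmK_R$ over a one-dimensional Cohen--Macaulay ring with canonical module $\rmK_R$ --- and even in that generality both the conductor-descent and the stable case currently require a hands-on dissection of the admissible value semigroups and Hilbert functions, whose number of cases grows with the multiplicity. Producing a uniform, multiplicity-free treatment of the stable case, together with a clean conductor-descent lemma, is precisely what is missing, and is where I expect the real difficulty to lie; it is also why the arguments of this paper stop at $\e(R)\le 6$. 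A tempting alternative is to bypass the induction by an asymptotic, Tor-vanishing argument in the spirit of Auslander--Reiten, deducing freeness of $I$ from the vanishing of the single $\Tor_1^R(I,I^{-1})$ together with the depth formula; this stalls for the same reason --- the lack of control on $\Tor_i^R(I,I^{-1})$ for $i\ge 2$ in the non-Gorenstein extensions the induction produces.
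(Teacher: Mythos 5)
The statement you were asked about is the Huneke--Wiegand conjecture itself: it is an open problem, the paper contains no proof of it, and your proposal does not prove it either --- as you yourself acknowledge when you write that the induction ``does not visibly close.'' So the honest verdict is that this is a strategy outline, not a proof, and the strategy you outline is essentially the one the paper pursues (pass to $B=\Hom_R(I,I)$, trade $\Hom_R(I,R)$ for $\Hom_B(I,\rmK_B)$, and try to induct on the singularity), which is exactly why the paper's results stop at $\e(R)\le 6$ and why Example \ref{7.1} shows the strengthened Cohen--Macaulay statement (Conjecture \ref{1.3}) is actually \emph{false} at $\e(R)=9$. Any successful induction through non-Gorenstein birational extensions must therefore somehow avoid, or exploit the special provenance of, the rings where the strengthened statement fails; your sketch does not address this.

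Two of your intermediate steps are also not merely ``delicate'' but wrong or unjustified as stated. First, the reduction to dimension one via a superficial element does not work the way you describe: torsionfreeness of $M\otimes_R\Hom_R(M,R)$ is not inherited by $\overline M\otimes_{\overline R}\Hom_{\overline R}(\overline M,\overline R)$, because the torsion submodule of the latter is governed by $\Tor$ modules that need not vanish; the actual reduction in \cite{HW} (and in Corollary \ref{1.5} of this paper) goes by localizing at height-one primes, applying the one-dimensional case there, and then invoking a reflexivity/depth argument (Auslander's theorem) --- and for that one needs the stronger hypothesis that $M\otimes_R\Hom_R(M,R)$ is \emph{reflexive}, not just torsionfree. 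Second, the passage from an arbitrary maximal Cohen--Macaulay module in dimension one to a single ideal is not available: a torsionfree module of rank $>1$ does not split into rank-one summands, and the conjecture for ideals (Conjecture \ref{1.2}) is already open; so even a complete solution of the ideal case would not, by the route you indicate, yield the module statement of Conjecture \ref{HWC}.
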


\noindent
Conjecture \ref{HWC} classically holds true, when the base ring $R$ is integrally closed (\cite[Proposition 3.3]{A}) and derived from the Auslander--Reiten conjecture (see \cite{CT}), which is one of the most important conjectures in ring theory. Conjecture \ref{HWC} is closely related to the Auslander--Reiten conjecture itself; in fact, Conjecture \ref{HWC} implies the Auslander--Reiten conjecture over Gorenstein local domains of positive dimension (\cite[Proposition 5.10]{CT}). C. Huneke and R. Wiegand \cite{HW} proved that it holds true if $R$ is a hypersurface, and showed also that Conjecture \ref{HWC} is reduced to the case where $\dim R = 1$. The problem is, however, still open in general, and no one has a complete answer to the following Conjecture \ref{1.2}, even in the case where $R$ is a complete intersection, or in the rather special case where $R$ is a numerical semigroup ring; see \cite{Constapel, H1, H2, HS}. The reader can consult \cite{GL, L} for the  recent major progress on numerical semigroup rings.

\begin{conj}{\label{1.2}}
Let $R$ be a Gorenstein local domain of dimension one and $I$ an ideal of $R$. If $I\otimes_R\Hom_R(I, R)$ is torsionfree, then $I$ is a principal ideal.
\end{conj}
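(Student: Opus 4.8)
The plan is to reduce Conjecture~\ref{1.2} to a statement about the trace ideal of $I$ and then to attack that statement by induction on the multiplicity of $R$. First I would make the harmless reductions: passing to the completion and adjoining indeterminates to the residue field changes neither the hypothesis nor the conclusion, so I may assume $R$ is a complete Gorenstein local domain of dimension one with infinite residue field; since every nonzero ideal of such a ring is $\m$-primary and the hypothesis concerns $I$ only up to isomorphism, I may assume $I \subsetneq R$ is a minimally generated $\m$-primary ideal. Because $R$ is Gorenstein, $\Hom_R(I, R)$ is canonically the fractional ideal $I^{-1} = \{x \in Q(R) : xI \subseteq R\}$, where $Q(R)$ is the quotient field, and the evaluation pairing gives a natural surjection $\mu \colon I \otimes_R I^{-1} \twoheadrightarrow II^{-1} =: \mathfrak{t}(I)$ onto the trace ideal of $I$. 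Localizing at the zero ideal makes $\mu$ an isomorphism (both sides become one-dimensional over $Q(R)$), so $\ker\mu$ is torsion; and since $\mathfrak{t}(I) \subseteq R$ is torsionfree, the torsion submodule of $I \otimes_R I^{-1}$ is killed by $\mu$, hence contained in $\ker\mu$. Thus $\ker\mu$ equals the torsion submodule, and the hypothesis ``$I \otimes_R I^{-1}$ is torsionfree'' is equivalent to ``$\mu$ is an isomorphism,'' i.e.\ to $I \otimes_R I^{-1} \cong \mathfrak{t}(I)$. On the other hand, over a local domain a nonzero ideal is principal if and only if it is invertible, i.e.\ if and only if $\mathfrak{t}(I) = R$. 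So the conjecture becomes: \emph{if $\mu \colon I \otimes_R I^{-1} \to \mathfrak{t}(I)$ is an isomorphism, then $\mathfrak{t}(I) = R$}.

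With this reformulation in hand, my strategy would be an induction on the multiplicity $\e(R)$. The base cases --- $\e(R) \le 2$, and more generally all hypersurfaces --- follow from the Huneke--Wiegand theorem via the rigidity of Tor, and the first few values of $\e(R)$ can be disposed of by explicit classification of the possible one-dimensional Gorenstein local domains. For the inductive step I would attempt to pass, from a putative counterexample $(R, I)$ with $\mathfrak{t}(I) \subsetneq R$, to a counterexample over a Gorenstein local domain of strictly smaller multiplicity --- for instance by replacing $R$ with a birational extension such as $I : I$, or with $\mathfrak{t}(I) : \mathfrak{t}(I)$, or with an iterated blowup, or by an Ulrich-type deformation --- equipped with a suitable companion ideal over the new ring, and to show that the isomorphism condition on $\mu$ is transported along the chosen change of rings. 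Simultaneously I would track the colength $\ell_R(R/\mathfrak{t}(I))$, which measures how far $I$ is from being invertible, aiming to show that it is forced to drop.

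The step I expect to be the real obstacle --- and indeed the reason Conjecture~\ref{1.2} is still open --- is precisely this inductive step. Over a general one-dimensional Gorenstein local domain there is no known substitute for Tor-rigidity; the constructions above need not preserve the Gorenstein property (or even remain domains); and it is not at all clear that the isomorphism $\mu \colon I \otimes_R I^{-1} \xrightarrow{\ \sim\ } \mathfrak{t}(I)$ descends to any natural companion ideal, nor that $\ell_R(R/\mathfrak{t}(I))$ is forced to decrease. The known partial results --- hypersurfaces; numerical semigroup rings of small embedding dimension; and the multiplicity-at-most-$6$ theorem of the present paper --- all proceed by case-by-case analysis rather than a uniform descent, and none of these arguments visibly stabilizes as $\e(R)$ grows. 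I would therefore expect the reduction in the first paragraph to be essentially formal, and the entire mathematical content of a proof of Conjecture~\ref{1.2} to be concentrated in closing the induction.
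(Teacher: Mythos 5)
First, a point of status: the statement you were asked to prove is Conjecture~\ref{1.2}, and the paper does not prove it --- it explicitly records it as open, even for complete intersections and for numerical semigroup rings. So there is no ``paper proof'' to compare against; the paper only establishes special cases. Your first paragraph is correct, and it is essentially the paper's Section~2 specialized to the Gorenstein case: the evaluation map $I\otimes_R I^{-1}\to R$ has image the trace ideal $II^{-1}$, its kernel is exactly the torsion submodule (this is Lemma~\ref{2.1}), torsionfreeness of the tensor product is therefore equivalent to $I\otimes_R I^{-1}\cong II^{-1}$, and principality of a nonzero ideal of the local domain $R$ is equivalent to $II^{-1}=R$. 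That reduction is sound but, as you say, formal.

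The genuine gap is everything after that: the induction on $\e(R)$ is announced but never carried out, and you acknowledge as much. As a proof the proposal therefore fails --- there is no argument that $\ell_R(R/II^{-1})$ drops under any of the proposed ring changes, no verification that the isomorphism condition descends to a ``companion ideal,'' and no control of the Gorenstein or domain properties of $I:I$ or of iterated blowups. The paper's partial results show both why your descent idea is natural and why it does not close. The change-of-rings principle (Lemma~\ref{2.1a} together with Proposition~\ref{2.2}) does transport the problem to any intermediate ring $R\subseteq S\subseteq I:I$, and this is exactly how Corollary~\ref{3.5} is proved; but unconditional conclusions are obtained only when an auxiliary numerical bound is available, namely the inequality $\e(R)>(r+1)s$ of Theorem~\ref{1.4}(1), proved by reducing modulo a minimal reduction of $\fkm$ and counting lengths --- this is what yields the $\e(R)\le 6$ case, not any decreasing induction on multiplicity. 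Moreover Example~\ref{7.3} exhibits a (non-Gorenstein) ring of multiplicity $9$ where the canonical-module analogue (Conjecture~\ref{1.3}) genuinely fails, so any uniform descent would have to exploit the Gorenstein hypothesis in an essential way that your outline does not identify. In short: the reformulation via the trace ideal is fine and agrees with the paper's framework, but the proposal proves nothing beyond what is formal, and honestly flagging the missing inductive step does not make it a proof of an open conjecture.
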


In this paper we are interested in considering this conjecture in the Cohen--Macaulay case by replacing $\Hom_R(I,R)$ with $\Hom_R(I,\rmK_R)$, where $\rmK_R$ stands for the canonical module of $R$. Our working hypothesis is the following Conjecture \ref{1.3}. One of the advantages of such a  modification is the usage of the symmetry between $I$ and $\Hom_R(I,\rmK_R)$ and the other one is the possible  change of rings (see Proposition \ref{2.2}). Of course, when the ring $R$ is Gorenstein, Conjecture \ref{1.3} is the same as Conjecture \ref{1.2}, since $\rmK_R \cong R$.

\begin{conj}\label{1.3}
Let $R$ be a Cohen--Macaulay local ring of dimension one and assume that $R$ possesses a canonical module $\rmK_R$. Let $I$ be a faithful ideal of $R$. If $I \otimes_R \Hom_R(I, \rm{K}_R)$ is torsionfree, then $I$ is isomorphic to either $R$ or $\rmK_R$ as an $R$-module.
\end{conj}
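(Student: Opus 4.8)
The plan is to convert the torsionfreeness hypothesis into the injectivity of the canonical pairing $I\otimes_R I^{\vee}\to\rmK_R$, read off numerical constraints from the resulting embedding, and then climb a chain of birational extensions of $R$ to a ring where the conclusion is classical; the step I expect to resist is the ``balanced'' case in which $I$ and $I^{\vee}$ each need at least two generators over a ring of large multiplicity.

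First I would normalize $R$ and $I$. Replacing $R$ by $R[x]_{\m R[x]}$ and then by its completion changes neither the hypothesis nor the conclusion nor the formation of the canonical module --- over a one-dimensional Cohen--Macaulay local ring torsionfreeness coincides with maximal Cohen--Macaulayness, and this, together with the isomorphism classes $I\cong R$ and $I\cong\rmK_R$, ascends and descends along these faithfully flat maps --- so I may assume $R$ is complete with infinite residue field. Since $I$ is faithful it contains a nonzerodivisor, so after replacing $I$ by an isomorphic copy I may view $I$ as a fractional ideal with $R\subseteq I\subseteq\overline{R}\subseteq\mathrm{Q}(R)$ and, fixing a fractional-ideal model of $\rmK_R$, identify $I^{\vee}$ with $\{x\in\mathrm{Q}(R)\mid xI\subseteq\rmK_R\}$. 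As $I$ is maximal Cohen--Macaulay it is reflexive with respect to $\rmK_R$, so $I^{\vee\vee}\cong I$ and the situation is symmetric in $I$ and $I^{\vee}$; moreover $I\cong R\iff I^{\vee}\cong\rmK_R$ and $I\cong\rmK_R\iff I^{\vee}\cong R$, which I would use to interchange $I$ and $I^{\vee}$ at will.

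Next I would analyze $\sigma\colon I\otimes_R I^{\vee}\to\rmK_R$, $a\otimes f\mapsto f(a)$, whose image is the trace $\tau:=I\cdot I^{\vee}\subseteq\rmK_R$. For a minimal prime $\mathfrak p$ the ring $R_{\mathfrak p}$ is Artinian and nonzero, so its socle is nonzero and annihilates every proper ideal; hence the faithful ideal $I_{\mathfrak p}$ must equal $R_{\mathfrak p}$, whence $I^{\vee}_{\mathfrak p}=\rmK_{R_{\mathfrak p}}$ and $\sigma_{\mathfrak p}$ is an isomorphism. Thus $\sigma$ is a generic isomorphism, $\rmK_R/\tau$ has finite length, and a short diagram chase shows that the torsion submodule of $I\otimes_R I^{\vee}$ equals $\Ker\sigma$; therefore
\[
I\otimes_R I^{\vee}\ \text{is torsionfree}\iff\sigma\ \text{is injective}\iff I\otimes_R I^{\vee}\cong\tau .
\]
So the hypothesis produces a maximal Cohen--Macaulay submodule $\tau$ of $\rmK_R$, generated by $\mu(I)\mu(I^{\vee})$ elements, with finite colength. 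Using a minimal presentation $R^{n}\xrightarrow{\varphi}R^{m}\to I\to 0$ ($m=\mu(I)$) and the Auslander transpose $\operatorname{Tr}I=\operatorname{coker}(\varphi^{t})$, one should, by applying $-\otimes_R I^{\vee}$ to the presentation, $\Hom_R(-,\rmK_R)$ to its transpose, and the vanishing $\Ext^{i}_R(\text{MCM},\rmK_R)=0$ for $i>0$, identify $\Ker\sigma$ with a module manufactured from $\Tor^R_1(\operatorname{Tr}I,I^{\vee})$ and, in parallel, obtain a length formula of the shape $\ell_R(\rmK_R/\tau)=F(\mu(I),\mu(I^{\vee}),\e(R))$. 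The first substantive task is to make this precise enough to force $\mu(I)=1$ or $\mu(I^{\vee})=1$; by the symmetry above, $\mu(I)=1$ means $I$ is principal, i.e. $I\cong R$, and we are done.

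To get there in general I would feed the reformulation into the change of rings of Proposition \ref{2.2}. If $R'$ is a module-finite birational extension of $R$ through which the hypothesis propagates --- the endomorphism ring $R'=\operatorname{End}_R(I)=(I:_{\mathrm{Q}(R)}I)$, or the ``ring of the trace'' $(\tau:_{\mathrm{Q}(R)}\tau)$, is the natural candidate --- then $\rmK_{R'}=\Hom_R(R',\rmK_R)$, the extended ideal has torsionfree tensor product with its $R'$-dual over $R'$, and the invariant $\ell_R(\overline{R}/R')$ drops strictly. Iterating, after finitely many steps one reaches an integrally closed ring, a finite product of discrete valuation rings, over which every faithful maximal Cohen--Macaulay ideal is free and the conclusion is trivial (this is the content, in the present setting, of the integrally closed case recorded in \cite[Proposition 3.3]{A}); one then transports the conclusion back down the chain. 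The genuine obstacle --- and precisely the reason the theorem actually proved carries a hypothesis like $\e(R)\le 6$ rather than settling the full conjecture --- is the balanced case $\mu(I)\ge 2$ and $\mu(I^{\vee})\ge 2$: there neither $I$ nor $I^{\vee}$ is visibly an ideal of a proper birational overring, so the change of rings gives no leverage, and closing the argument would require a uniform lower bound for $\ell_R(\rmK_R/\tau)$ in terms of $\mu(I)\mu(I^{\vee})$ that contradicts the injectivity of $\sigma$ --- a bound available at present only when $\e(R)$ is small enough that the possibilities can be enumerated by hand.
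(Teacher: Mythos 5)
The statement you are trying to prove is Conjecture \ref{1.3}, and it is \emph{false}: the paper itself exhibits a counterexample in Example \ref{7.3}, where $R=k[[t^9,t^{10},t^{11},t^{12},t^{15}]]$ and $I=(1,t)$ satisfy the hypothesis ($I\otimes_RI^{\vee}$ torsionfree) while $\mu_R(I)=\mu_R(I^{\vee})=2$, so $I\not\cong R$ and $I\not\cong\rmK_R$. No proof can exist, so your proposal necessarily contains a fatal gap. To your credit, your preparatory steps do match the paper's actual machinery: the equivalence ``$I\otimes_RI^{\vee}$ torsionfree $\iff$ the evaluation map $t\colon I\otimes_RI^{\vee}\to\rmK_R$ is injective'' is Lemma \ref{2.1}, the passage to $B=I:I$ and the descent of the conclusion is Lemma \ref{2.1a} together with Proposition \ref{2.2}, and the numerical exploitation of $\mu_R(I)\mu_R(I^{\vee})\le\mu_R(\rmK_R)\le\e(R)-1$ is exactly how Theorem \ref{1.4} is proved. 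You also correctly locate the obstruction in the balanced case $\mu_R(I),\mu_R(I^{\vee})\ge2$.

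The concrete false step is the climbing argument: you assert that replacing $R$ by $R'=\operatorname{End}_R(I)=I:I$ (or by the order of the trace ideal) strictly decreases $\ell_R(\overline{R}/R')$, so that finitely many iterations reach the normalization, where the statement is classical. This is not so. Once the evaluation map is an isomorphism over $B=I:I$ (the conclusion of Lemma \ref{2.1a}), the endomorphism ring stabilizes --- $I:I=B$ computed over $B$ is again $B$ --- and the chain stops, in general far below $\overline{R}$. In Example \ref{7.3} one has precisely this situation: the pairing is already an isomorphism onto $\rmK_R$, there is no proper birational overring to move to, and yet $I$ is neither free nor canonical. The only way to close the balanced case is a lower bound on $\e(R)$ in terms of $\mu_R(I)\mu_R(I^{\vee})$ (this is assertion (1) of Theorem \ref{1.4}, giving $\e(R)>(r+1)s\ge6$, and its refinement Proposition \ref{5.3} for monomial ideals, giving $\e(R)\ge8$), and such bounds are sharp rather than unboundedly improvable: at $\e(R)=9$ the conjecture simply fails. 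So the correct target here is not a proof but the restricted statements of Theorems \ref{1.4}(2), \ref{3.3}, and \ref{5.4}.
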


\noindent
We should note here, in advance, that Conjecture \ref{1.3} is not true in general; later we shall give a counterexample. Nevertheless, the inquiry into the truth of Conjecture \ref{1.3} will make a certain amount of progress also in the study of Conjecture \ref{1.2}, which we would like to report in this paper.

Let us state our results, explaining how this paper is organized.
The following is the main result of our paper, which leads to Corollary \ref{1.5} of higher dimension.

\begin{thm}\label{1.4}
Let $R$ be a Cohen--Macaulay local ring of dimension one having a canonical module $\rmK_R$.
Let $I$ be a faithful ideal of $R$.
Set $r=\mu_R(I)$ and $s=\mu_R(\Hom_R(I,\rmK_R))$.
\begin{enumerate}[\rm(1)]
\item
Assume that the canonical map $I \otimes_R\Hom_R(I,\rmK_R)\to \rmK_R$ is an isomorphism.
If $r, s \ge 2$, then $\e(R) > (r+1)s \ge 6$.
\item
Suppose that $I \otimes_R\Hom_R(I,\rmK_R)$ is torsionfree.
If $\e(R) \le 6$, then $I$ is isomorphic to either $R$ or $\rmK_R$.
\end{enumerate}
\end{thm}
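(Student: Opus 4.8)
\medskip
\noindent\textbf{Plan of proof.}

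\emph{From (1) to (2).} Write $I^\vee=\Hom_R(I,\rmK_R)$ and let $\xi\colon I\otimes_RI^\vee\to\rmK_R$ be the evaluation map. Since $I$ is faithful it is $\m$-primary and maximal Cohen--Macaulay, hence so are $I^\vee$ and $\rmK_R$, and passing to the total quotient ring shows $\xi$ is generically bijective, so $\Ker\xi$ has finite length. If $I\otimes_RI^\vee$ is torsionfree it has positive depth, whence $\Ker\xi=0$. Put $S=\operatorname{End}_R(I)=(I:_QI)$, a module-finite birational extension of $R$ with $\e(S)\le\e(R)$; then $\operatorname{End}_S(I)=S$, $\rmK_S=\Hom_R(S,\rmK_R)$ and $\Hom_S(I,\rmK_S)\cong I^\vee$. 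The canonical surjection $I\otimes_RI^\vee\twoheadrightarrow I\otimes_S\Hom_S(I,\rmK_S)$ has finite-length kernel, so torsionfreeness makes it an isomorphism; in particular $I\otimes_S\Hom_S(I,\rmK_S)$ is torsionfree over $S$ and the corresponding evaluation map $\xi_S$ is injective. Applying $\Hom_S(-,\rmK_S)$ to the short exact sequence attached to $\xi_S$, and using $\Hom_S\big(I\otimes_S\Hom_S(I,\rmK_S),\rmK_S\big)\cong\operatorname{End}_S(I)=S$ together with the biduality isomorphism $I\cong\Hom_S(\Hom_S(I,\rmK_S),\rmK_S)$, one finds that $\operatorname{Coker}\xi_S$ is the $\rmK_S$-dual of the cokernel of an isomorphism, hence $\operatorname{Coker}\xi_S=0$ and $\xi_S$ is an isomorphism. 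Now part (1) applies over $S$: as $\e(S)\le\e(R)\le6$ we cannot have both $\mu_S(I)\ge2$ and $\mu_S(\Hom_S(I,\rmK_S))\ge2$, so $I\cong S$ or $I\cong\rmK_S$ as $S$-modules, hence as $R$-modules. In either case $I\otimes_RI^\vee\cong S\otimes_R\rmK_S$; since multiplication $S\otimes_R\rmK_S\to\rmK_S$ is a generically bijective surjection onto a torsionfree module it is an isomorphism, forcing $\mu_R(S)=1$, i.e.\ $S=R$. Therefore $I\cong R$ or $I\cong\rmK_R$.

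\emph{Reducing (1).} We may assume the residue field $k$ infinite. From $I\otimes_RI^\vee\cong\rmK_R$ we get $\mu_R(\rmK_R)=\mu_R(I)\mu_R(I^\vee)=rs$, so $R$ has Cohen--Macaulay type $rs$; as $rs\ge4$, $R$ is not regular, and modulo a minimal reduction $x$ of $\m$ this gives $\operatorname{type}(R)\le\ell_R(\m/xR)=\e(R)-1$, i.e.\ $\e(R)\ge rs+1$. The point is to gain the extra $s$. Let $A=R/xR$, an Artinian local ring of length $\e(R)$, and $\overline{(-)}=(-)\otimes_RA$. Since $x$ is a nonzerodivisor on $R$, $I$, $I^\vee$ and $\rmK_R$, base change is exact on them, $\overline{\rmK_R}=\rmK_A=E_A(k)$, $\overline{I^\vee}=\Hom_A(\overline I,\rmK_A)$, and $\overline I\otimes_A\overline{I^\vee}\cong\rmK_A$ via evaluation, with $\mu_A(\overline I)=r$ and $\mu_A(\overline{I^\vee})=s$. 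As $\rmK_A=E_A(k)$ is faithful and $\operatorname{ann}_A(\overline I)\subseteq\operatorname{ann}_A(\overline I\otimes_A\overline{I^\vee})=0$, the module $M:=\overline I$ is faithful over $A$. Writing $(-)^\vee=\Hom_A(-,\rmK_A)$ for Matlis duality, $M$ satisfies: $M$ faithful, $\ell_A(M)=\ell_R(I/xI)=\e(\m;I)=\e(R)=\ell_A(A)$, $\mu_A(M)=r\ge2$, $\operatorname{type}_A(M)=\mu_A(M^\vee)=s\ge2$, $M\otimes_AM^\vee\cong\rmK_A$, and $\operatorname{End}_A(M)=(M\otimes_AM^\vee)^\vee\cong A$. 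Thus (1) is reduced to: for such $A$ and $M$, $\ell_A(A)>(r+1)s$.

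\emph{The crux.} Equivalently, since $\dim_k\operatorname{soc}(A)=\operatorname{type}(A)=rs$ lies inside $\m_A$, one must exhibit at least $s$ elements of $\m_A$ independent modulo $\operatorname{soc}(A)$, i.e.\ $\ell_A(\m_A/\operatorname{soc}(A))\ge s$. I expect this socle/length bookkeeping to be the main obstacle. The hypotheses $r,s\ge2$ are indispensable --- for $r=1$ or $s=1$ everything is already an isomorphism and the inequality is false --- and the leverage is that the faithful $A$-module $M$ of full length has only an $s$-dimensional socle, which is extremely restrictive: each minimal generator $m$ of $M$ has a nonzero annihilator $\mathfrak b$ (otherwise $A\cong Am\subseteq M$ would force $M\cong A$, so $r=1$), and $\operatorname{soc}(A/\mathfrak b)\subseteq\operatorname{soc}(M)$ forces $\dim_k(\mathfrak b\cap\operatorname{soc}(A))\ge(r-1)s$; one must combine these $r$ relations (whose ideals meet in $0$, since $M$ is faithful) with the minimal surjection $M^s\twoheadrightarrow\rmK_A$ dual to an embedding $A\hookrightarrow(M^\vee)^s$ and with the self-duality $M\otimes_AM^\vee\cong\rmK_A$, $\operatorname{End}_A(M)=A$, in order to produce the $s$ extra units of length in $\m_A$. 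As a sanity check, the extremal case $\m_A^2=0$ is excluded outright: there $\m_AM\subseteq\operatorname{soc}(M)$, yet $M$ faithful with $\ell_A(M)=\ell_A(A)=rs+1$ gives $\ell_A(\m_AM)=r(s-1)+1>s$, contradicting $\operatorname{type}_A(M)=s$. Carrying out this count uniformly in $r$ and $s$, rather than case by case in low multiplicity, is the technical heart of the proof.
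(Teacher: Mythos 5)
Your reduction of (2) to (1) contains a genuine gap. You set $S=\operatorname{End}_R(I)=I:I$ and then ``apply part (1) over $S$,'' but $S$ need not be local, whereas part (1) is stated (and is only meaningful) for a Cohen--Macaulay \emph{local} ring of dimension one; in particular $\e(S)$, $\mu_S(I)$ and $\mu_S(\Hom_S(I,\rmK_S))$ do not have their intended meaning when $S$ is merely semi-local. The paper's proof of (2) passes to the $\m$-adic completion, decomposes $B=I:I$ as $B\cong\prod_{\n\in\operatorname{Max}B}B_\n$, proves $\sum_{\n}\e(B_\n)\le\e(R)\le 6$ so that part (1) applies to each local factor $B_\n$, and then establishes a separate claim ruling out the mixed case where $I_\n\cong B_\n$ for some $\n$ while $I_{\n'}\cong\rmK_{B_{\n'}}$ for others. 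That claim is itself a nontrivial argument (it shows the mixed case would force $B=R$, contradicting that $B$ is not local), and your proposal supplies no substitute for it.

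More fundamentally, your proof of part (1) is incomplete by your own account: after the reduction modulo a minimal reduction $x$ of $\m$ to an Artinian local ring $A=R/xR$ and a faithful $A$-module $M=I/xI$ with $\ell_A(M)=\ell_A(A)=\e(R)$, $\mu_A(M)=r$, $\operatorname{type}_A(M)=s$, $M\otimes_AM^\vee\cong\rmK_A$, $\operatorname{End}_A(M)\cong A$, you explicitly leave the target inequality $\ell_A(A)>(r+1)s$ as ``the technical heart'' and offer only the $\m_A^2=0$ sanity check and a sketch about annihilators of generators; neither yields the inequality in general. The paper's argument is different and complete: it takes a minimal presentation $0\to X\to S^{\oplus r}\to M\to 0$ over $S=R/fR$, applies both the $\rmK_S$-dual and the $M$-dual, uses $\Hom_S(M,M)\cong S$ (a consequence of $M\otimes_SM^\vee\cong\rmK_S$) to get an embedding $S\hookrightarrow M^{\oplus r}$ with image inside $\Hom_S(X,M)$-related data, computes $\ell_S(X)=(r-1)e$, bounds $\mu_S(X^\vee)\ge r\cdot rs - s = s(r^2-1)$ from the $\rmK_S$-dual sequence, and deduces $e\ge s(r+1)$; the equality case is then excluded by showing it forces $\n^2M=0$ and hence $s\ge e-r=s(r+1)-r$, which fails for $r,s\ge 2$. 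Unless you carry your bookkeeping to such a conclusion, or find this presentation argument, the proposal does not prove the theorem.
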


\noindent
Here, $\mu_R(*)$ denotes the number of elements in a minimal system of generators, and $\e(*)$ stands for the multiplicity with respect to the maximal ideal of $R$.


\begin{cor}\label{1.5}
Let $R$ be a Cohen--Macaulay local ring with $\dim R \ge 1$.
Assume that for every height one prime ideal $\fkp$ the local ring $R_\fkp$ is Gorenstein and $\e (R_\fkp) \le 6$. Let $I$ be a faithful ideal of $R$. If $I\otimes_R\Hom_R(I,R)$ is reflexive, then $I$ is a principal ideal.
\end{cor}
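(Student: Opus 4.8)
The plan is to reduce the higher-dimensional statement to the one-dimensional Theorem \ref{1.4}(2) by localizing at height one primes, and then to propagate the local conclusion back to a global one via a reflexivity/biduality argument. First I would recall that over a Cohen--Macaulay local ring, a finitely generated module is reflexive if and only if it satisfies Serre's condition $(S_2)$ together with the appropriate codimension-one condition; in particular, a reflexive module is torsionfree and its formation commutes with localization. Since $I$ is faithful (hence contains a nonzerodivisor), $\Hom_R(I,R)$ is again a faithful ideal up to isomorphism after choosing an embedding, and the canonical map $I\otimes_R\Hom_R(I,R)\to R$ localizes correctly at each prime. So fix a height one prime $\fkp$. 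Then $R_\fkp$ is a one-dimensional Gorenstein local ring with $\e(R_\fkp)\le 6$, its canonical module is $R_\fkp$ itself, and $I_\fkp$ is a faithful ideal of $R_\fkp$. The hypothesis that $I\otimes_R\Hom_R(I,R)$ is reflexive forces it to be torsionfree, hence its localization $I_\fkp\otimes_{R_\fkp}\Hom_{R_\fkp}(I_\fkp,R_\fkp)$ is torsionfree as an $R_\fkp$-module.

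Now I would apply Theorem \ref{1.4}(2) to the ring $R_\fkp$: it yields that $I_\fkp$ is isomorphic to $R_\fkp$ or to $\rmK_{R_\fkp}=R_\fkp$ as an $R_\fkp$-module; either way $I_\fkp$ is free of rank one over $R_\fkp$. Thus $I$ is a faithful ideal that is locally free of rank one in codimension one. The next step is to upgrade this to: $I$ is projective of rank one, i.e. invertible, in codimension one, and then to conclude $I$ is principal. Here is where I would use the reflexivity hypothesis more seriously. Because $I\otimes_R\Hom_R(I,R)$ is reflexive, and reflexive modules are determined by their behaviour in codimension one (a reflexive module $M$ over a CM ring equals the intersection $\bigcap_{\height\fkp=1} M_\fkp$ inside $M\otimes_RK$, where $K$ is the total quotient ring), the fact that the natural map $I\otimes_R\Hom_R(I,R)\to R$ is an isomorphism after localizing at every height one prime — which follows from $I_\fkp$ being invertible — implies that the map is an isomorphism globally, since both source and target are reflexive and the map is an isomorphism in codimension one. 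Hence $I\otimes_R\Hom_R(I,R)\cong R$.

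Finally, from $I\otimes_R\Hom_R(I,R)\cong R$ I would conclude that $I$ is projective of rank one: the isomorphism exhibits $\Hom_R(I,R)$ as an inverse to $I$ in the monoid of isomorphism classes of finitely generated modules under tensor product, and a finitely generated module that is invertible in this sense is projective of constant rank one. Over a local ring a projective module is free, so $I\cong R$ and $I$ is principal. The main obstacle I anticipate is the descent step in the second paragraph: justifying rigorously that a morphism between reflexive modules which is an isomorphism in codimension one is a global isomorphism (equivalently, that the canonical map $I\otimes_R\Hom_R(I,R)\to R$ has reflexive — indeed zero — kernel and cokernel once it is reflexive and an isomorphism off codimension one). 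This requires the standard but slightly delicate fact that for a reflexive module $N$ over a CM ring, $N\hookrightarrow \bigcap_{\height\fkp\le 1}N_\fkp$ with equality, so that both injectivity and surjectivity of the map can be checked after localization at height one primes; care is also needed to ensure $\Hom_R(I,R)$ itself is reflexive (which it is, being a dual) so that this comparison is legitimate on both sides.
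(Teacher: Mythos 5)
Your proof is correct, and it takes a genuinely different route from the paper's. The paper argues by contradiction with a minimal-dimensional counterexample: localizing at \emph{every} non-maximal prime and invoking minimality shows $I$ is free on the punctured spectrum, after which the paper appeals to an Auslander-type theorem (cited from Celikbas--Dao, Theorem 3.4) to conclude $I$ is free. You avoid both the induction and the external theorem: you localize only at height one primes, apply Theorem \ref{1.4}(2) there, and then globalize via the fact that a morphism between reflexive modules over a CM local ring is an isomorphism as soon as it is an isomorphism in codimension $\le 1$. Your worry about that last step is unfounded -- the lemma you need is standard and does not require Gorensteinness: if $\varphi\colon M\to N$ has $\varphi_\fkp$ an isomorphism for all $\height\fkp\le 1$, then $\ker\varphi$ is killed because $\operatorname{Ass}(\ker\varphi)\subseteq\operatorname{Ass}(M)\subseteq\operatorname{Min}(R)$ while $\operatorname{Supp}(\ker\varphi)$ avoids minimal primes; and $\operatorname{coker}\varphi$ has grade $\ge 2$ over the CM ring $R$, so $\Ext^0_R(\operatorname{coker}\varphi,R)=\Ext^1_R(\operatorname{coker}\varphi,R)=0$, whence $\varphi^\ast$ and then $\varphi^{\ast\ast}=\varphi$ are isomorphisms. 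Two small points worth making explicit: you also need $t$ to be an isomorphism at the minimal primes, but this is automatic (a faithful ideal of an Artinian local ring is the unit ideal; alternatively, for a CM ring of dimension $\ge 1$ every minimal prime lies under a height-one prime, and a localization of an isomorphism is an isomorphism); and the final step $I\otimes_R\Hom_R(I,R)\cong R\Rightarrow I\cong R$ is cleanest via $\mu_R(I)\cdot\mu_R(\Hom_R(I,R))=\mu_R(R)=1$, so $I$ is cyclic and faithful, hence free. In effect your argument reproves exactly the instance of the Auslander theorem that the paper cites, which makes your version more self-contained, at the cost of a slightly longer write-up.
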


We shall prove Theorem \ref{1.4} and Corollary \ref{1.5} in Section 3. Section 2 is devoted to some preliminaries, which we need to prove Theorem \ref{1.4} and Corollary \ref{1.5}.

In Section 4 (and partly in Section 3) we study numerical semigroup rings.
Let $V = k[[t]]$ be a formal power series ring over a field $k$.
Let $0 < a_1 < a_2< \cdots <a_{\ell}$ be integers such that $\mathrm{gcd}(a_1, a_2, \ldots, a_{\ell}) = 1$, and let
$$
R = k[[t^{a_1}, t^{a_2}, \ldots, t^{a_\ell}]]
$$
be the subring of $V$ generated by $t^{a_1}, t^{a_2}, \ldots, t^{a_\ell}$, which is called a numerical semigroup ring over $k$.
With this notation we have the following, which we shall prove in Section 3 (Corollary \ref{3.5}).

\begin{thm}\label{1.6}
Let $R=k[[t^a, t^{a+1}, \ldots, t^{2a-2}]]$ $(a\ge3)$ and $I$ an ideal of $R$. If the $R$-module $I\otimes_R\Hom_R(I,R)$ is torsionfree, then $I$ is a principal ideal.
\end{thm}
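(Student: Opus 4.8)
The plan is to reduce Theorem~\ref{1.6} to a numerical property of the semigroup $S=\langle a,a+1,\ldots,2a-2\rangle$, treating small and large $a$ separately. First I would record the shape of $S$: one checks directly that
$$
S=\{0\}\cup\{a,a+1,\ldots,2a-2\}\cup\{n\in\mathbb Z\mid n\ge 2a\},
$$
so the gaps of $S$ are $\{1,2,\ldots,a-1\}\cup\{2a-1\}$, the Frobenius number is $2a-1$, and $n\in S$ if and only if $(2a-1)-n\notin S$; hence $S$ is symmetric. Therefore $R$ is a one-dimensional Gorenstein local ring with $\rmK_R\cong R$, so that $\Hom_R(I,R)=\Hom_R(I,\rmK_R)=:I^{\vee}$ and the two alternatives ``$I\cong R$'' and ``$I\cong\rmK_R$'' of Conjecture~\ref{1.3} both say exactly that $I$ is principal; moreover $\e(R)=a$, since $a=\min(S\setminus\{0\})$. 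If $a\le 6$, then $\e(R)\le 6$ and Theorem~\ref{1.4}(2) immediately gives that $I$ is principal.

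For $a\ge 7$, where Theorem~\ref{1.4}(2) no longer applies directly, I would push its proof one step further. As in the proof of Theorem~\ref{1.4}(2), torsionfreeness of $I\otimes_RI^{\vee}$ forces the canonical map $I\otimes_RI^{\vee}\to\rmK_R$ to be an isomorphism, so Theorem~\ref{1.4}(1) applies: if $\mu_R(I)\ge 2$ and $\mu_R(I^{\vee})\ge 2$, then $a=\e(R)>(\mu_R(I)+1)\mu_R(I^{\vee})$. Hence it would suffice to prove the opposite inequality $(\mu_R(I)+1)\mu_R(I^{\vee})\ge a$ under the hypotheses $\mu_R(I)\ge 2$ and $\mu_R(I^{\vee})\ge 2$ — for instance, $\mu_R(I)+\mu_R(I^{\vee})\ge a$ would do. Granting such a bound, we must have $\mu_R(I)=1$ or $\mu_R(I^{\vee})=1$, i.e.\ $I\cong R$, or else $I^{\vee}\cong R$ and then $I\cong I^{\vee\vee}\cong\Hom_R(R,\rmK_R)=\rmK_R\cong R$; in either case $I$ is principal. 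To get the desired inequality I would normalise $I$ so that $R\subseteq I\subseteq\bar R=k[[t]]$, reduce to the case that $I$ is a monomial (i.e.\ $S$-homogeneous) fractional ideal — so that $I^{\vee}=R:_{\bar R}I$ is again monomial and both $\mu_R(I)$ and $\mu_R(I^{\vee})$ are determined by $S$ — and then carry out a combinatorial count using the rigidity of $S$: every nonzero nonunit of $R$ has $t$-order at least $a$, the conductor of $R$ in $\bar R$ is $t^{2a}\bar R$, and $\m^{n}=t^{na}\bar R\cap R$ is integrally closed for every $n\ge 2$.

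The genuinely hard part is the case $a\ge 7$, and within it the combinatorial bound just described together with the reduction to monomial ideals it relies on. Unlike in Theorem~\ref{1.4}(2), the inequality produced by Theorem~\ref{1.4}(1) is not by itself contradictory once $\e(R)=a$ is large — it is, for example, compatible with $\mu_R(I)=\mu_R(I^{\vee})=2$ — so the whole difficulty is to show that the very restrictive shape of $S$ forces a small number of generators for one of $I$, $I^{\vee}$ to be paid for by roughly $a$ minus that number on the other (the equality case to keep in mind being $I=(t^{a},t^{a+1})$, for which $\mu_R(I)=2$ and $\mu_R(I^{\vee})=a-2$). I anticipate this being a finite but delicate analysis of the minimal generating sets of $I$ and of $R:_{\bar R}I$ inside $S$, and I anticipate that justifying the passage to monomial ideals — since torsionfreeness of a tensor product need not be preserved on replacing $I$ by its initial ideal — will require a separate argument, plausibly via the change-of-rings results of Proposition~\ref{2.2}.
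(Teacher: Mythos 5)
Your reductions for $a\le 6$ are correct: $R$ is a Gorenstein local ring of multiplicity $a=\e(R)$, so Theorem~\ref{1.4}(2) settles that range at once. The trouble is the whole case $a\ge 7$, which you explicitly leave as a plan rather than a proof, and where there is in fact a genuine gap. First, torsionfreeness of $I\otimes_RI^{\vee}$ does \emph{not} force the canonical map $t\colon I\otimes_RI^{\vee}\to\rmK_R$ to be surjective over $R$; by Lemma~\ref{2.1} it only forces $t$ to be injective, and it becomes an isomorphism only after passing to $B=I:I$ (Lemma~\ref{2.1a}). To apply Theorem~\ref{1.4}(1) you would therefore have to work over $B$ (or its localizations), where the relevant multiplicity is $\e(B_\fkn)$, not $a$, and $B$ may not even be local — this is exactly the bookkeeping that the proof of Theorem~\ref{1.4}(2) must do, and it does not carry over for free to $a\ge 7$. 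Second, the inequality $(\mu_R(I)+1)\mu_R(I^{\vee})\ge a$ (or $\mu_R(I)+\mu_R(I^{\vee})\ge a$) on which your plan rests is never established, and neither is the reduction to monomial $I$; you correctly flag both as open. So for $a\ge 7$ the proposal as written does not close.

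The paper's argument (Corollary~\ref{3.5}) is entirely different and avoids all of this. If $\mu_R(I)>1$, then $B=I:I$ strictly contains $R$ (else $t_B$ makes $I$ invertible over $R$, hence principal). Since $R:\fkm=R+kt^{2a-1}$ and $B/R$ has a nonzero socle element, we get $t^{2a-1}\in B$, so $B$ contains the larger semigroup ring $S=k[[t^a,t^{a+1},\ldots,t^{2a-1}]]$. This $S$ satisfies $\fkm_S\overline{S}\subseteq S$ (maximal embedding dimension), and for such rings Theorem~\ref{3.3} gives the conclusion directly by a short length computation (using $\fkm_S^2=f\fkm_S$ for a reduction $f$), with no bound on $\e$ and no restriction to monomial ideals — this is Proposition~\ref{3.4a}. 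Then Lemma~\ref{2.1a} transfers torsionfreeness of $I\otimes_S\Hom_S(I,\rmK_S)$, and Proposition~\ref{2.2} brings the conclusion $I\cong S$ or $I\cong\rmK_S$ back down to $I\cong R$, a contradiction. In short, where you tried to squeeze more out of Theorem~\ref{1.4}(1) over $R$ itself, the paper enlarges the ring until a structurally simpler hypothesis ($\fkm\overline{R}\subseteq R$) takes over; spotting that $R:\fkm=R+kt^{2a-1}$ forces $B\supseteq S$ is the missing idea.
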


\noindent
We notice that Theorem \ref{1.6} gives a new class of Gorenstein local domains for which Conjecture \ref{1.2} holds true. In fact, the ring $R$ is a Gorenstein local ring which is not a complete intersection, if $a \ge 5$ (see Example \ref{3.6}).

In Sections 4 and 5 we study monomial ideals, that is, ideals generated by monomials in $t$. The main result is the following, which covers \cite[Main Theorem]{H1} in the case where $R$ is a Gorenstein ring.

\begin{thm}\label{1.8}
Let $R=k[[t^{a_1}, t^{a_2}, \ldots, t^{a_\ell}]]$ be a numerical semigroup ring over a field $k$ and assume that $\e(R) \le 7$. Let $I\ne(0)$ be a monomial ideal of $R$. If $I\otimes_R\Hom_R(I,\rmK_R)$ is torsionfree, then one has either $I\cong R$ or $I\cong \rm{K}_R$.
\end{thm}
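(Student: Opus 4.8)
The plan is to reduce Theorem \ref{1.8} to the one-dimensional multiplicity bound of Theorem \ref{1.4}(2) by pushing the multiplicity threshold from $6$ up to $7$ in the special case where $R$ is a numerical semigroup ring and $I$ is a monomial ideal. Since $\dim R = 1$ automatically, the only case not already covered by Theorem \ref{1.4}(2) is $\e(R) = 7$. First I would normalize: replacing $I$ by an isomorphic copy, we may assume $I$ is a monomial ideal with $t^0 = 1 \in I$ and $I \subseteq R$ a faithful (hence nonzero, hence $\m$-primary up to the right normalization) ideal, so that $I$ corresponds to a numerical subset of the numerical semigroup $H = \langle a_1,\dots,a_\ell\rangle$. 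The key structural point is that everything in sight — $I$, $\Hom_R(I,\rmK_R) = I^\vee$, the canonical module $\rmK_R$ itself (which for a numerical semigroup ring is the monomial ideal generated by $\{t^{-h} : h \in \mathbb{Z}\setminus H\}$ suitably shifted), and the tensor product and its torsion — is $\mathbb{Z}$-graded by the $t$-grading, so torsionfreeness of $I \otimes_R I^\vee$ can be detected combinatorially on semigroups.

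Next I would invoke Proposition \ref{2.2} (the change-of-rings statement advertised in the introduction) together with the symmetry between $I$ and $I^\vee$: if $I \otimes_R I^\vee$ is torsionfree then, arguing as in the proof of Theorem \ref{1.4}, the canonical surjection $I \otimes_R I^\vee \to \rmK_R$ is an isomorphism, and we are in the setting of Theorem \ref{1.4}(1). That part already gives $\e(R) > (r+1)s$ whenever $r = \mu_R(I) \ge 2$ and $s = \mu_R(I^\vee) \ge 2$; so if both $r,s \ge 2$ we would need $\e(R) \ge (r+1)s + 1 \ge 3\cdot 2 + 1 = 7$, and the genuinely new work is to rule out exactly $\e(R) = 7$ with $(r,s) = (2,2)$ (the extremal case) and, by symmetry in $r \leftrightarrow s$, to rule out whatever the bound $\e(R) > (r+1)s$ fails to exclude. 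Concretely: $\e(R) = 7$ forces $(r+1)s \le 6$, so $(r,s) \in \{(2,2),(3,1)?,(1,\cdot)\}$ — the cases $r=1$ or $s=1$ mean $I$ or $I^\vee$ is principal, i.e. $I \cong R$ or $I \cong \rmK_R$ and we are done, and $(r+1)s \le 6$ with $r,s\ge 2$ forces $r=s=2$. So the whole theorem comes down to: \emph{if $\e(R)=7$, $I$ is a monomial ideal with $\mu_R(I) = \mu_R(I^\vee) = 2$, and $I \otimes_R I^\vee \cong \rmK_R$, derive a contradiction.}

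To execute that last step I would work explicitly with numerical semigroup data. Write $I = (t^{\alpha_1}, t^{\alpha_2})$ after normalization and $I^\vee = (t^{\beta_1}, t^{\beta_2})$; the condition $\e(R) = 7$ means the smallest positive element (the multiplicity) of $H$ is $7$, so $H$ contains $0, 7$ and four more residues among $\{8,9,\dots,13\}$ up to the conductor, and $\rmK_R$, $I$, $I^\vee$ all have length/colength data tightly constrained by $\e(R)=7$ (e.g. $\ell_R(R/\mathfrak{c}) $ and the number of gaps are bounded). The torsionfreeness forces the multiplication map $I^\vee \otimes I \to \rmK_R$ to be injective, which in the monomial/graded world says that the natural map $H^{(1)} \times H^{(2)} \to$ (exponent set of $\rmK_R$) given by addition of exponents has no "collisions" that survive to torsion — equivalently, a count: $\ell_R(I\otimes_R I^\vee) = \ell_R(\rmK_R) + (\text{torsion length})$, and torsionfree means the torsion length is $0$, which pins down $\dim_k$ of the degree-$n$ pieces of $I \otimes I^\vee$ for all $n$. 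I would then combine (i) the four-generator-or-fewer constraint coming from $\e(R)=7$ (so $\ell \le 7$, and in fact the relevant residues are few), (ii) the equality $\mu_R(I) + \mu_R(I^\vee) = 4$, and (iii) the numerical identity $\ell_R(R/I) + \ell_R(R/I^\vee) = \ell_R(R/\mathfrak{c}_{I})$-type relations (symmetry of canonical duals: $\ell_R(R/I) = \ell_R(I^\vee{}^\vee/\cdots)$, more precisely $I^\vee \cong \rmK_R :_Q I$ inside the total quotient ring $Q$, so $\ell_R(R/I) = \ell_R(I^\vee/\rmK_R:_Q\!R) $, etc.) to get a finite case check over the (finitely many, small) numerical semigroups of multiplicity $7$ whose conductor is not too large, and verify that in each the tensor product acquires torsion unless $I$ is principal or is (a shift of) $\rmK_R$. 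The main obstacle I expect is precisely this final combinatorial case analysis: the number of numerical semigroups with $\e(R)=7$ is not tiny, and for each one must list the monomial ideals $I$ with $\mu_R(I)=2$, compute $I^\vee$, check $\mu_R(I^\vee)=2$ and $I\otimes_R I^\vee \cong \rmK_R$, and then exhibit an explicit torsion element (a monomial relation that maps to $0$ in $\rmK_R$ but is nonzero in the tensor product) — organizing this so that it is a clean finite check rather than a sprawl, presumably by first proving a lemma that bounds the conductor (equivalently the Frobenius number) in terms of $\e(R)$ and the hypothesis $\mu_R(I)=\mu_R(I^\vee)=2$, is where the real effort lies. The parts that are essentially routine, given Theorem \ref{1.4} and Proposition \ref{2.2}, are the reduction to $\e(R)=7$, the reduction to $(r,s)=(2,2)$, and the translation of "torsionfree" into the statement "$I\otimes_R I^\vee \to \rmK_R$ is an isomorphism."
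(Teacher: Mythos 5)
Your reduction to the extremal case is fine, and it even short-circuits one branch of the paper's argument: using Theorem~\ref{1.4}(1) directly, $\e(R)=7$ together with $r,s\geq 2$ forces $(r+1)s\leq 6$ and hence $r=s=2$, so you do not need the separate treatment the paper gives to the case $\mu_R(\rmK_R)=\rmr(R)=6$ (which it handles via $\rmv(R)=\e(R)$ and Theorem~\ref{3.3a}). A small imprecision: the map $t\colon I\otimes_R I^\vee\to\rmK_R$ is not ``the canonical surjection'' and is not an isomorphism over $R$ in general; torsionfreeness of $I\otimes_RI^\vee$ only gives injectivity of $t$ over $R$. To get an actual isomorphism you must pass to $B=I:I$ via Lemma~\ref{2.1a}, and then check that $B$ is again a local numerical semigroup ring with $\e(B)\leq\e(R)\leq 7$ and that $I$ is still a monomial fractional ideal over $B$; this all holds, but it needs to be said because Theorem~\ref{1.4}(1) is only available after that change of rings.

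The genuine gap is the final step. You reduce correctly to: $e=7$, $I$ a monomial ideal with $\mu_R(I)=\mu_R(I^\vee)=2$ and $IJ=\rmK_R$, $\mu_R(\rmK_R)=4$, derive a contradiction. At that point you propose a finite enumeration of numerical semigroups of multiplicity $7$, conditional on ``first proving a lemma that bounds the conductor.'' But no such bound is established, and it is not clear one exists: there are infinitely many numerical semigroups of multiplicity $7$, with conductors growing without bound, and the constraint $\mu_R(\rmK_R)=4$ (i.e.\ $\rmr(R)=4<6=e-1$) does not obviously cap the Frobenius number. So the brute-force route is not viable as described, and this is exactly the content the paper supplies differently. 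The paper's Proposition~\ref{5.3}, resting on Lemmas~\ref{5.1} and~\ref{5.2}, is a uniform modular-arithmetic argument: writing $c_1=a-b_1$, $c_2=a-b_2$, $c_1+c_2=a-b_3$ with $b_i\in\calS$, one shows that the eight integers $a$, $b_1$, $b_2$, $b_3$, $2b_2-a$, $b_2+b_3-a$, $b_1+b_3-a$, $2b_1-a$ must be pairwise incongruent modulo $e$, which is impossible when $e=7$. That argument works for arbitrary conductor and replaces the enumeration you were hoping to organize. Without it (or an equivalent), your proof does not close, and you should regard that residue-class counting as the missing lemma rather than a case check to be outsourced.
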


Unfortunately, Theorem \ref{1.8} and hence Conjecture \ref{1.3} are  no longer true, when $\e (R) =9$ (see Example \ref{7.1}). We are still not sure whether the assertion stated in Theorem \ref{1.8} is true in general, when  $\e (R) = 8$. We actually have monomial ideals $I$ in several numerical semigroup rings $R$ with $\e (R) = 8$, which satisfy the equalities
$$\mu_R(I){\cdot}\mu_R(\Hom_R(I,\rmK_R))=\mu_R(\rmK_R)\ \ \text{and}\ \ I{\cdot}(\rmK_R : I) = \rmK_R.$$ However, as far as we know, the $R$-modules $I \otimes_R\Hom_R(I,\rmK_R)$ do have non-trivial torsions for those ideals $I$.

In Section 6 we note an elementary method to compute the torsion part $\rmT(I\otimes_RJ)$ of $I\otimes_RJ$ for some ideals $I, J$ of $R$, and apply it in Section 7 to explore concrete examples.

In what follows, unless otherwise specified, let $R$ be a Cohen--Macaulay local ring  with  maximal ideal $\m$. We set $F=\rmQ (R)$, the total ring of fractions of $R$. For each finitely generated $R$-module $M$, let  $\mu_R(M)$ and $\ell_R(M)$ denote,  respectively,  the number of elements in a minimal system of generators of $M$ and the length of $M$. For each Cohen--Macaulay $R$-module $M$, we denote by  $\rmr_R(M)$ the Cohen--Macaulay type of $M$ (see \cite[Definition 1.20]{HK2}).


\section{Change of rings}

The purpose of this section is to summarize some preliminaries, which we need throughout this paper.

Let $R$ be a Cohen--Macaulay local ring  with  maximal ideal $\m$ and $\dim R = 1$. Let $F=\rmQ (R)$ stand for the total ring of fractions of $R$ and let $\calF$ denote the set of fractional ideals $I$ of $R$ such that $FI = F$. Assume that $R$ possesses a canonical module $\rmK_R$. This condition is equivalent to saying that $R$ is a homomorphic image of Gorenstein local ring (\cite{R}). For each $R$-module $M$ we set $M^{\vee} = \Hom_R(M,\rmK_R)$.

Let $I \in \calF$.
Denote by
$$
t : I \otimes_RI^{\vee} \to \rmK_R
$$
the $R$-linear map given by $t(x \otimes f) = f(x)$ for $x \in I$ and $f \in I^{\vee}$.
Let $\alpha : I\otimes_RI^{\vee} \to F \otimes_R(I \otimes_RI^{\vee})$ and $\iota : \rmK_R \to F\otimes_R\rmK_R$ be the maps given by $\alpha(y)=1\otimes y$ and $\iota(z)=1\otimes z$ for $y\in I\otimes_RI^{\vee}$ and $z\in\rmK_R$.
Let
\begin{multline*}
\phi:F\otimes_R(I\otimes_R I^{\vee})
\xrightarrow{\sim}(F\otimes_RI)\otimes_F\Hom_F(F \otimes_RI, F\otimes_R\rmK_R)\\
\xrightarrow{\sim}F\otimes_F \Hom_F(F,F\otimes_R\rmK_R)
\xrightarrow{\sim}F\otimes_R\rmK_R
\end{multline*}
be the composition of natural isomorphisms.
Then the diagram
$$
\begin{CD}
F\otimes_R(I\otimes_R I^{\vee}) @>{\phi}>> F\otimes_R\rmK_R \\
@A{\alpha}AA @A{\iota}AA \\
I \otimes_R I^{\vee} @>{t}>> \rmK_R
\end{CD}
$$
is commutative.
As $\iota$ is injective, we have $\Ker\alpha = \Ker t$.
Hence the torsion part $\rmT(I\otimes_RI^{\vee})$ of the $R$-module $I\otimes_RI^{\vee}$ is given by $$\rmT(I\otimes_RI^{\vee}) = \Ker t$$ and we get the following.

\begin{lem}\label{2.1} The $R$-module $I \otimes_R I^{\vee}$ is torsionfree if and only if the map $t : I \otimes_R I^{\vee} \longrightarrow {\rm K}_R$ is injective.
\end{lem}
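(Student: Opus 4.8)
The plan is to read the lemma directly off the commutative square constructed above, so that essentially no new work is needed beyond two observations about torsion over a one-dimensional Cohen--Macaulay ring.

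First I would record that $\rmK_R$ is a maximal Cohen--Macaulay $R$-module, so $\depth_R\rmK_R = \dim R = 1$; hence $\mathrm{Ass}_R\rmK_R \subseteq \mathrm{Ass}\,R = \mathrm{Min}\,R$, and $\rmK_R$ has no nonzero submodule killed by a nonzerodivisor. Consequently the localization map $\iota : \rmK_R \to F\otimes_R\rmK_R$ is injective. Likewise, since $F = \rmQ(R)$ is obtained from $R$ by inverting the nonzerodivisors, for the finitely generated module $M = I\otimes_RI^{\vee}$ the kernel of $\alpha : M \to F\otimes_RM$ is precisely the torsion submodule, i.e. $\Ker\alpha = \rmT(I\otimes_RI^{\vee})$.

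Next I would use the commutativity of the diagram, which reads $\iota\circ t = \phi\circ\alpha$. Because $\phi$ is an isomorphism (here one uses $I\in\calF$, so that $F\otimes_RI \cong F$ as an $F$-module and each arrow in the definition of $\phi$ is a natural isomorphism) and $\iota$ is injective, one obtains
$$
\Ker t = \Ker(\iota\circ t) = \Ker(\phi\circ\alpha) = \Ker\alpha = \rmT(I\otimes_RI^{\vee}).
$$
Therefore $I\otimes_RI^{\vee}$ is torsionfree if and only if $\rmT(I\otimes_RI^{\vee}) = 0$, if and only if $\Ker t = 0$, if and only if $t$ is injective, which is exactly the assertion.

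There is no real obstacle: the only points that require a word of justification are the two facts feeding the computation, namely that $\rmK_R$, being maximal Cohen--Macaulay, is torsionfree so that $\iota$ is injective, and that the kernel of the natural map into the total ring of fractions computes the torsion submodule. Both are standard for finitely generated modules over a one-dimensional Cohen--Macaulay ring, and combined with the commutative square already at hand they yield the equivalence immediately.
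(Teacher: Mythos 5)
Your proof is correct and follows exactly the paper's own route: it reads the lemma off the commutative square, using that $\phi$ is an isomorphism, that $\iota$ is injective (since $\rmK_R$ is maximal Cohen--Macaulay and hence torsionfree), and that $\Ker\alpha$ is the torsion submodule of $I\otimes_R I^{\vee}$. You have merely made explicit the two standard facts the paper leaves implicit; the argument is the same.
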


We set $L = \Im(I \otimes_R I^{\vee} \xrightarrow{t} {\rm K}_R)$ and $T =\rmT(I \otimes_R I^{\vee})$. Then $T^{\vee} = (0)$ since $\ell_R(T) < \infty$.
Taking the $\rmK_R$-dual of the short exact sequence $0 \to T \to I\otimes_R I^{\vee} \xrightarrow{t} L \to 0$, we have $L^{\vee} = (I \otimes_R I^{\vee})^\vee$. Hence the equalities
$$L^{\vee} = (I \otimes_R I^{\vee})^\vee = \Hom_R(I, I^{\vee \vee})= I:I
$$
follow. Recall that $B = I:I$ forms a subring of $F$ which is a module-finite over $R$. 

We now take an arbitrary intermediate ring $R \subseteq S \subseteq B$. Then $I$ is also a fractional ideal of $S$. We set $\rmK_S = S^\vee$ and remember that $L=L^{\vee \vee}$ (\cite[Satz 6.1]{HK2}). Then since $L^{\vee} = B$, we have  
\begin{align*}
&L=L^{\vee \vee} = B^\vee = \rmK_B \subseteq S^\vee=\rmK_S\quad\text{and}\\
&\Hom_S(I,\rmK_S) = \Hom_S(I, \Hom_R(S,\rmK_R)) \cong \Hom_R(I\otimes_SS,\rmK_R) = \Hom_R(I,\rmK_R).
\end{align*}
Let us identify $I^\vee = \Hom_S(I,\rmK_S)$  and we consider  the commutative diagram
\[
\xymatrix{
0 &&&\\
I \otimes_S \Hom_S(I, \rmK_S) \ar[u]  &\ar[r]^{t_S} & &\rmK_S  \\
I \otimes_R I^{\vee} \ar[u]^{\rho} &\ar[r]^{t} & &L \ar[u]^{\iota} & \ar[r]& &0
}
\]
where $\iota : L \to \rmK_S$ is the embedding and $\rho:I \otimes_R I^{\vee} \to I\otimes_S \Hom_S(I, \rmK_S)$ denotes the $R$-linear map defined by $\rho (x\otimes f)= x\otimes f$ for $x \in I$ and $f \in I^{\vee}$. Suppose now that $I\otimes_RI^{\vee}$ is torsionfree. Then since the map $t:I\otimes_RI^{\vee} \to L$ is bijective by Lemma \ref{2.1}, the map $\rho : I\otimes_RI^{\vee} \to I\otimes_S\Hom_S(I,\rmK_S)$ is bijective, whence the $S$-module $I\otimes_S\Hom_S(I,\rmK_S)$ is also torsionfree, because the map $t_S : I \otimes_S\Hom_S(I,\rmK_S) \to \rmK_S$ is injective. Thus we get the following, where  the last assertion comes from the fact  that $L = \rmK_B$.

\begin{lem}\label{2.1a} Let $I \in \calF$ and suppose that $I \otimes_R I^{\vee}$ is torsionfree. Let $R \subseteq S \subseteq B$ be an intermediate ring between $R$ and $B$, where $B = I:I$. Then $I \otimes_S \Hom_S(I, \rmK_S)$ is a torsionfree $S$-module and the canonical map $\rho : I\otimes_RI^{\vee} \to I\otimes_S\Hom_S(I,\rmK_S)$ is bijective. In particular, if we take $S = B$, then the map $$t_B : I \otimes_B \Hom_B(I, \rmK_B) \to \rmK_B, ~~x \otimes f \mapsto f(x)$$ is an isomorphism of $B$-modules.
\end{lem}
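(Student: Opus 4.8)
The plan is to organize the discussion preceding the statement into a short diagram chase, working throughout with fractional ideals inside $F=\rmQ(R)$.

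First I would set up the change of rings. For an intermediate ring $R\subseteq S\subseteq B$, the ring $S$ is module-finite over $R$, hence again a one-dimensional Cohen--Macaulay ring with $\rmQ(S)=F$ and canonical module $\rmK_S=S^\vee=\rmK_R:_F S$; since $S\subseteq B=I:I$ we have $SI=I$, so $I$ is a fractional ideal of $S$ lying in $\calF$, and
$$\Hom_S(I,\rmK_S)=\rmK_S:_F I=\rmK_R:_F(SI)=\rmK_R:_F I=\Hom_R(I,\rmK_R)=I^\vee$$
as fractional ideals, which is the identification used in the statement. As recorded in the discussion above, $L^\vee=(I\otimes_R I^\vee)^\vee=I:I=B$ and $L=L^{\vee\vee}$ by \cite[Satz 6.1]{HK2}, so $L=B^\vee=\rmK_B=\rmK_R:_F B\subseteq\rmK_R:_F S=\rmK_S$; thus $\iota\colon L\hookrightarrow\rmK_S$ is just an inclusion of fractional ideals, and on a simple tensor $x\otimes f$ both $\iota\circ t$ and $t_S\circ\rho$ return the product $fx\in F$, so the displayed square commutes.

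Next I would run the chase. The natural map $\rho\colon I\otimes_R I^\vee\to I\otimes_S\Hom_S(I,\rmK_S)$ is surjective, as $I\otimes_S\Hom_S(I,\rmK_S)$ is a quotient of $I\otimes_R I^\vee$. Since $I\otimes_R I^\vee$ is torsionfree, Lemma \ref{2.1} makes $t\colon I\otimes_R I^\vee\to L$ bijective; because $\iota$ is injective, commutativity $t_S\circ\rho=\iota\circ t$ forces $\rho$ to be injective, hence bijective, and then $t_S=\iota\circ t\circ\rho^{-1}$ is injective. Applying the torsion computation that precedes Lemma \ref{2.1} over the ring $S$ --- which identifies $\rmT(I\otimes_S\Hom_S(I,\rmK_S))$ with $\Ker t_S$ --- we conclude that $I\otimes_S\Hom_S(I,\rmK_S)$ is torsionfree. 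Finally, taking $S=B$ gives $L=\rmK_B$, so $\iota$ is the identity and $t_B\circ\rho=t$ surjects onto $\rmK_B$; thus $t_B$ is both surjective and injective, i.e.\ an isomorphism of $B$-modules.

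I expect the only delicate point to be the bookkeeping in the first step: one must check that the entire canonical-module formalism used over $R$ transfers to $S$, namely that $\rmK_S=S^\vee$ is genuinely a canonical module of $S$, that maximal Cohen--Macaulay $S$-modules are $\rmK_S$-reflexive, and that Lemma \ref{2.1} together with the preceding torsion identification hold verbatim over $S$ (which may be semilocal rather than local). Once this is granted, the proof uses nothing beyond Lemma \ref{2.1}, the identity $L=L^{\vee\vee}$, and the surjectivity of the base-change map $\rho$.
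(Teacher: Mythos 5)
Your proposal is correct and follows essentially the same route as the paper: you identify $\Hom_S(I,\rmK_S)$ with $I^\vee$ (the paper does this via Hom--tensor adjunction, you via the colon computation $\rmK_S:_F I=\rmK_R:_F(SI)=\rmK_R:_F I$, which are equivalent), show $L=\rmK_B\subseteq\rmK_S$ from $L^\vee=B$ and $L=L^{\vee\vee}$, and then run the same diagram chase (surjectivity of $\rho$ by base change, bijectivity of $t$ from Lemma~\ref{2.1}, injectivity of $\iota$, hence $\rho$ bijective and $t_S$ injective) to conclude torsionfreeness over $S$ and, for $S=B$, that $t_B$ is an isomorphism. The caveat you flag at the end (that the canonical-module formalism and Lemma~\ref{2.1} carry over to the possibly semilocal ring $S$) is a reasonable thing to note, but it is also implicitly assumed in the paper's argument and is standard.
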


The following is the key in our arguments.

\begin{prop}[{\bf Principle of change of rings}]\label{2.2}
Let   $I \in \calF$  and assume that  $I\otimes_R I^{\vee}$  is torsionfree. If there exists an intermediate ring $R\subseteq S \subseteq B$ such that $I \cong S$ or $I \cong \rmK_S$ as an $S$-module, then $I \cong R$ or $I \cong \rmK_R$ as an  $R$-module.
\end{prop}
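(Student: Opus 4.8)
The plan is to reduce everything to the single intermediate ring $B=I:I$ and then to exploit the torsionfreeness of $I\otimes_RI^\vee$ to force $B=R$; once $B=R$, the two alternatives $I\cong S$ and $I\cong\rmK_S$ become precisely $I\cong R$ and $I\cong\rmK_R$, and nothing is left to prove.

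First I would argue that the intermediate ring $S$ appearing in the hypothesis is forced to equal $B$. Both $I$ and $S$---and, likewise, $\rmK_S=S^\vee=\Hom_R(S,\rmK_R)$---are fractional ideals of $S$ of generic rank one sitting inside $F$, so any $S$-isomorphism $S\xrightarrow{\sim}I$ (respectively $\rmK_S\xrightarrow{\sim}I$) is given by multiplication by a unit $u$ of $F$; hence $I=uS$ (respectively $I=u\rmK_S$). Computing colon ideals inside $F$ then yields
$$B=I:I=uS:uS=S:S=S$$
in the first case, and $B=I:I=u\rmK_S:u\rmK_S=\rmK_S:\rmK_S=\Hom_S(\rmK_S,\rmK_S)=S$ in the second---the last equality being the standard identity $\Hom_S(\rmK_S,\rmK_S)=S$ for one-dimensional Cohen--Macaulay local rings, which follows from the canonical-module duality of \cite[Satz 6.1]{HK2}. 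So $S=B$, and I would assume this from now on.

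Next I would bring $R$ back into the picture through minimal numbers of generators. Since $I\otimes_RI^\vee$ is torsionfree, Lemma \ref{2.1} makes $t:I\otimes_RI^\vee\to\rmK_R$ injective, so $I\otimes_RI^\vee\cong L=\rmK_B$ as $R$-modules (recall $L=\rmK_B$ from the discussion preceding Lemma \ref{2.1a}), whence
$$\mu_R(I)\cdot\mu_R(I^\vee)=\mu_R(I\otimes_RI^\vee)=\mu_R(\rmK_B).$$
If $I\cong B$ as $B$-modules, then $\mu_R(I)=\mu_R(B)$ while $I^\vee=\Hom_R(I,\rmK_R)\cong\Hom_R(B,\rmK_R)=\rmK_B$, so $\mu_R(I^\vee)=\mu_R(\rmK_B)$ and the displayed equality collapses to $\mu_R(B)=1$. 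If instead $I\cong\rmK_B$ as $B$-modules, then $\mu_R(I)=\mu_R(\rmK_B)$ while $I^\vee\cong\Hom_R(\rmK_B,\rmK_R)=(B^\vee)^\vee\cong B$---here one uses that $B$, being a finitely generated torsionfree and hence maximal Cohen--Macaulay $R$-module, is reflexive with respect to $(-)^\vee$ (\cite[Satz 6.1]{HK2})---so $\mu_R(I^\vee)=\mu_R(B)$ and the equality again collapses to $\mu_R(B)=1$. In either case $\mu_R(B)=1$, and together with $R\subseteq B$ this forces $B=R$: writing $B=Rb$ and using $1\in B$, one gets $1=rb$ with $r$ necessarily a nonzerodivisor of $R$ (as $b\in F$), so $b=1/r$ in $F$, and then $b^{2}\in B=Rb$ forces $1/r\in R$, whence $B\subseteq R$. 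Thus $B=R$ and $\rmK_B=\rmK_R$, so $I\cong B=R$ in the first case and $I\cong\rmK_B=\rmK_R$ in the second, as desired.

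I expect the only delicate point to be the reduction $S=B$: one has to notice that an abstract $S$-module isomorphism $S\cong I$ or $\rmK_S\cong I$ is automatically realized by multiplication inside $F$, which is exactly what pins down the colon $I:I$. Everything else rests on material already developed before Lemma \ref{2.1a}---notably $\rmK_S=\Hom_R(S,\rmK_R)$, the identity $L=\rmK_B$, and $(-)^\vee$-reflexivity of maximal Cohen--Macaulay modules---together with the multiplicativity $\mu_R(M\otimes_RN)=\mu_R(M)\,\mu_R(N)$ of minimal numbers of generators over the local ring $R$, which is the real engine of the argument.
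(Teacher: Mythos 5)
Your proof is correct, and it uses the same essential engine as the paper (the change-of-rings identification of $I\otimes_RI^\vee$ together with multiplicativity of $\mu_R$ under tensor product over the local ring $R$), but the organization is genuinely different. The paper never pins $S$ down: it applies the isomorphism $\rho$ of Lemma~\ref{2.1a} with the given $S$, simplifies $I\otimes_S\Hom_S(I,\rmK_S)$ to $I^\vee$ (case $I\cong S$) or to $I$ (case $I\cong\rmK_S$), and reads off $\mu_R(I)=1$ or $\mu_R(I^\vee)=1$ directly, after which $I\cong R$ or $I\cong I^{\vee\vee}\cong\rmK_R$. You instead first prove the pleasant normalization $S=B$, by observing that any $S$-isomorphism between two fractional $S$-ideals inside $F$ is realized by multiplication by a unit of $F$, so that $I:I$ coincides with $S:S$ or $\rmK_S:\rmK_S$, both of which equal $S$; you then replace $I\otimes_RI^\vee$ by $L=\rmK_B$ (rather than invoking $\rho$), compute $\mu_R(I)$ and $\mu_R(I^\vee)$ in terms of $\mu_R(B)$ and $\mu_R(\rmK_B)$, and extract $\mu_R(B)=1$, which you then upgrade to $B=R$ by a clean fractional-ideal computation. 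What your route buys is the explicit and reusable observation that the hypothesis forces $S=B$ and ultimately $B=R$; what the paper's route buys is brevity, since it bypasses both reductions and lands on $\mu_R(I)=1$ or $\mu_R(I^\vee)=1$ in one line. Two small remarks for polish: the identification $\rmK_S:\rmK_S=S$ as subsets of $F$ (not merely an abstract isomorphism) is what you actually need for $B=S$, and it does hold here because the canonical isomorphism $S\to\Hom_S(\rmK_S,\rmK_S)$ is the multiplication map; and the final step $\mu_R(B)=1\Rightarrow B=R$ could also be phrased as: $B$ cyclic and faithful over $R$ gives $B\cong R$, and a cyclic ring extension $R\subseteq B=Rb$ with $b$ a unit of $F$ and $b^2\in Rb$ forces $b\in R$.
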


\begin{proof}
Suppose that $I \cong S$ as an $S$-module and consider the isomorphisms
$$
I\otimes_R I^\vee \overset{\rho}{\cong} I \otimes_S\Hom_S(I,\rmK_S)\cong \Hom_S(I, \rmK_S)\cong I^\vee
$$
of $R$-modules. We then have $
\mu_R(I){\cdot}\mu_R(I^\vee) = \mu_R(I^\vee),
$ so that $I \cong R$ as an $R$-module, since $\mu_R(I)=1$. Suppose that $I \cong \rmK_S$ as an $S$-module. Then because $S\cong \Hom_S(\rmK_S, \rmK_S)$ (\cite[Satz 6.1 d) 3)]{HK2}), we get the isomorphisms
$$
I\otimes_R I^\vee \overset{\rho}{\cong} I \otimes_S \Hom_S(I, \rmK_S) \cong I
$$
of $R$-modules. Hence  $\mu_R(I^\vee)=1$, so that $I \cong \rmK_R$ as an $R$-module, because $I \cong I^{\vee \vee}$. 
\end{proof}

We close this section with the following.

\begin{prop}
Let $I$ be an $\fkm$-primary ideal of $R$ and assume that $I^2 = aI$ for some $a \in I$. If $I\otimes_RI^{\vee}$ is torsionfree, then $I = aR$.
\end{prop}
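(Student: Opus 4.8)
The plan is to exploit the hypothesis $I^2 = aI$ to realize $B = I:I$ as a concrete overring of $R$ and then apply the Principle of Change of Rings (Proposition \ref{2.2}). First I would observe that since $I$ is $\fkm$-primary, $I \in \calF$, so all the machinery of Section 2 applies. The equation $I^2 = aI$ with $a \in I$ means $a$ is a minimal reduction of $I$ with reduction number zero; in particular $a$ is a nonzerodivisor (it generates an $\fkm$-primary ideal since $a^n R \supseteq (aI)^{n-1}a = I^n \cdot (\text{unit-ish})$, more precisely $I^n = a^{n-1}I$ shows $a$ is $\fkm$-primary hence a nonzerodivisor as $R$ is Cohen--Macaulay of dimension one). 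Then $J := a^{-1}I$ is a fractional ideal containing $R = a^{-1}(aR) \subseteq a^{-1}I = J$ and satisfying $J^2 = a^{-2}I^2 = a^{-2}aI = a^{-1}I = J$. So $J$ is a ring, $R \subseteq J \subseteq F$, and $J$ is module-finite over $R$ since $I$ is finitely generated.

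Next I would identify $J$ with an intermediate ring in the sense of Lemma \ref{2.1a}. Since $J \cdot J = J$ and $R \subseteq J$, we have $J \subseteq J : J = B' $ where $B' = I : I$ (note $I:I = a^{-1}I : a^{-1}I$ because multiplication by $a$ is an isomorphism). Actually the cleaner route: as an $R$-module $I \cong J$ via multiplication by $a^{-1}$, and $B = I:I = J:J \supseteq J$ because $J J = J$. Thus $R \subseteq J \subseteq B$ is an intermediate ring. Now over $S = J$, the module $I$ (equivalently $J$) is a free $S$-module of rank one: indeed $J = J \cdot 1 \cong S$ as $S$-modules, so $I \cong J \cong S$ as $S$-modules. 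Since $I \otimes_R I^\vee$ is assumed torsionfree, Proposition \ref{2.2} applied with this $S$ gives that $I \cong R$ or $I \cong \rmK_R$ as $R$-modules.

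To finish and get $I = aR$ on the nose rather than merely $I \cong R$, I would rule out $I \cong \rmK_R$ and then pin down the isomorphism. If $I \cong R$ as an $R$-module, then $\mu_R(I) = 1$, so $I = bR$ for some $b \in I$; then $I^2 = b^2 R = aI = abR$ forces $b^2 R = abR$, hence $b \in aR$ (as $b$ is a nonzerodivisor, $bR = aR$), so $I = bR = aR$. The case $I \cong \rmK_R$: this would make $R$ Gorenstein only if also $I \cong R$, but in general $I \cong \rmK_R$ need not give $\mu_R(I) = 1$. However, $I \cong \rmK_R$ combined with $I^2 = aI$, $I \cong a^{-1}I = J$ a ring, would force $\rmK_R$ to carry a ring structure with $\rmK_R \cong R$-module being a ring containing $R$ — more directly, $J \cong \rmK_R$ as $J$-modules would give, via $\Hom_J(\rmK_J, \rmK_J) \cong J$, that $J \cong \rmK_J \cong \rmK_R$ forces $\mu_R(\rmK_R) = \mu_J(\rmK_J)$; but a ring isomorphic as a module to its own canonical module is Gorenstein, and more to the point $J \cong \rmK_R$ as $R$-modules with $J$ a ring containing $R$ cyclically (generated by $1$ over $J$) — I would argue via $\mu$-count: from $\rho$ bijective, $\mu_R(I) \mu_R(I^\vee) = \mu_R(\rmK_R)$, and if $I \cong \rmK_R$ then $I^\vee \cong R$ so $\mu_R(I^\vee) = 1$ and $\mu_R(\rmK_R) = \mu_R(I) = \mu_R(\rmK_R)$, consistent, so this case genuinely can occur only when $\rmK_R$ happens to satisfy $\rmK_R^2 = a\rmK_R$ in some embedding — which forces $R = J$ to be Gorenstein after all, returning to $I \cong R$. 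The delicate point — and the main obstacle — is precisely this: showing that the case $I \cong \rmK_R$ collapses to $I = aR$, i.e., that $\rmK_R \cong R$ here, which should follow because $J$ is a Gorenstein ring (being a principal ideal ring, as $J^2 = J$ and $J$ is a one-dimensional Cohen--Macaulay local-ish ring — actually $J$ could be semilocal), so $\rmK_J \cong J$, and then $I \cong \rmK_R \cong \rmK_J \cong J \cong R$-module of rank one forces $I$ principal, hence $I = aR$ as above.
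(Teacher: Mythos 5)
Your first two paragraphs correctly capture the heart of the matter, and in fact your overring $J = a^{-1}I$ is exactly $B = I:I$: you already showed $J \subseteq B$, and conversely $a \in I$ gives $aB \subseteq IB \subseteq I$, hence $B \subseteq a^{-1}I = J$, so $J = B$. Thus the reduction "realize $I$ as a free rank-one module over $B$" is precisely what the paper does. The difficulty is the final paragraph, which is both unnecessary and, as written, not sound.

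The case $I \cong \rmK_R$ never needs to be entertained. Proposition \ref{2.2} is stated as a disjunction in both hypothesis and conclusion only because it covers two separate inputs; inspecting its proof shows the correspondence is rigid: the branch $I \cong S$ as an $S$-module yields $I \cong R$, and the branch $I \cong \rmK_S$ yields $I \cong \rmK_R$. You are in the first branch, so you may conclude $I \cong R$ outright. Concretely, since $\rho$ is bijective and $I \cong S$ as $S$-modules, $I\otimes_R I^\vee \cong I\otimes_S \Hom_S(I,\rmK_S) \cong \Hom_S(I,\rmK_S) \cong I^\vee$, whence by Nakayama $\mu_R(I)\mu_R(I^\vee) = \mu_R(I^\vee)$ and so $\mu_R(I)=1$. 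Your subsequent argument that $I=bR$ together with $I^2=aI$ forces $bR=aR$ is correct and finishes the proof, matching the paper's final sentence (which instead notes $B = I:I = R$ once $I$ is principal, and then $I = aB = aR$). By contrast, the material you marshal to "rule out $I\cong\rmK_R$" contains genuine errors: $J^2 = J$ does not make $J$ a principal ideal ring; a Gorenstein conclusion from $J\cong\rmK_J$ presupposes $J$ local, which you rightly worry may fail; and the displayed $\mu$-count $\mu_R(I)\mu_R(I^\vee)=\mu_R(\rmK_R)$ is not what bijectivity of $\rho$ gives in this situation (it gives $\mu_R(I)\mu_R(I^\vee)=\mu_R(I^\vee)$, as above). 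None of this is fatal, since the entire paragraph can simply be deleted, but as written it does not constitute a valid argument.
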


\begin{proof}
We have $a^{-1}I \subseteq I : I = B$, since $I^2 = aI$.  Therefore $I \cong B$ as a $B$-module, because $I = aB$, so that $I\otimes_RI^{\vee} \overset{\rho}{\cong} I \otimes_B\Hom_B(I,\rmK_B) \cong \Hom_B(I, \rmK_B) \cong I^{\vee}$. Thus $I \cong R$ as an $R$-module. Hence $I = aR$, because $B = R$.
\end{proof}


\section{Proof of Theorem \ref{1.4}}
The purpose of this section is to prove Theorem \ref{1.4}. We maintain the same notation and terminology as in Section 2.

\begin{proof}[Proof of assertion (1) of Theorem \ref{1.4}]
Enlarging the residue class field $R/\fkm$ of $R$, without loss of generality we may assume that the field $R/\m$ is infinite. Choose $f \in \m$ so that $fR$ is a reduction of $\m$. We set $S = R/fR$, $\n= \m/fR$, and $M = I/fI$. Hence $\mu_S(M)=r$ and $\rmr_S(M)=\ell_S((0):_M \n)=s$ by \cite[Bemerkung 1.21 a), Satz 6.10]{HK2} (here $\rmr_S(M)$ denotes the Cohen--Macaulay type of $M$). We  write $M = Sx_1+Sx_2+\cdots + Sx_r$ with $x_i \in M$ and consider the following presentation
$$(\sharp_0)\ \ \ \ 
0 \to X \to S^{\oplus r} \overset{\varphi}{\longrightarrow} M \to 0
$$
 of the $S$-module $M$, where $\varphi$ denotes the $S$-linear map defined by $\varphi(\mathbf{e}_j)=x_j$ for $1 \le \forall j \le r$ (here $\{\mathbf{e}_j\}_{1 \le j \le r}$ is the standard basis of $S^{\oplus r}$). Then, taking the $K_S$-dual (denoted by $[*]^\vee$ again) and the $M$-dual respectively of the above presentation $(\sharp_0)$, we get the following two exact sequences
$$(\sharp_1)\ \ \ \ 
0 \to M^{\vee} \to \rmK_S^{\oplus r} \to X^{\vee} \to 0,
$$
$$(\sharp_2)\ \ \ \ 
0 \to \Hom_S(M, M) \to M^{\oplus r}\to \Hom_S(X, M)
$$
of $S$-modules. Remember that $I\otimes_RI^\vee \overset{t}{\cong} \rmK_R$ and we have $$M\otimes_SM^\vee \cong S\otimes_R (I\otimes_RI^\vee) \overset{S\otimes_Rt}{\cong} S\otimes_R\rmK_R = \rmK_S,$$ because $S\otimes_RI^\vee = M^\vee$ and $S\otimes_R\rmK_R = \rmK_S$ (\cite[Lemma 6.5, Korollar 6.3]{HK2}). Hence $$S = \Hom_S(\rmK_S,\rmK_S) \cong \Hom_S(M\otimes_SM^\vee, \rmK_S) = \Hom_S(M, M^{\vee\vee}) = \Hom_S(M,M),$$ so that exact sequence $(\sharp_2)$ gives rise to the exact sequence 
$$(\sharp_3)\ \ \ \  
0 \to S \overset{\psi}{\longrightarrow} M^{\oplus r}\to \Hom_S(X, M),
$$
where $\psi = {}^t \varphi$ is the transpose of $\varphi$, satisfying
$\psi(1) = (x_1, x_2, \ldots, x_r) \in M^{\oplus r}$.

We set $q = \mu_S(X^{\vee})~(= \ell_S((0):_X\n))$ and $e = \rme (R)$.
Then by $(\sharp_0)$ we get
$$\ell_S(X) = r{\cdot}\ell_S(S) - \ell_S(M) = re-e=(r-1)e,$$
since $\ell_S(S) = \e (R)$ and $\ell_S(M) =  \e_{fR}^0(I) = \e_{fR}^0(R)=\e(R)$, where $\e_{fR}^0(I)$ and $\e_{fR}^0(R)$ denote respectively the multiplicity of $I$ and $R$ with respect to $fR$. On the other hand, by exact sequence $(\sharp_1)$ we have 
$$q = \mu_S(X^\vee) \ge \mu_S(\rmK_S^{\oplus r})- \mu_S(M^\vee) = r{\cdot}\mu_S(\rmK_S) - \rmr_S(M).
$$
Because $I \otimes_RI^\vee \cong \rmK_R$ and $
\mu_S(\rmK_S) = \rmr(S) = \rmr(R) = \mu_R(\rmK_R)
$ (\cite[Korollar 6.11]{HK2}), 
we get $\mu_S(\rmK_S) = rs$, whence
$$
(r-1)e = \ell_S(X) \ge \ell_S((0):_X \n) = q \ge r^2s-s = s(r^2-1).
$$
Thus $e \ge s(r+1)$, since $r, s \ge 2$.

Suppose now that $e = s(r+1)$. Then since $\ell_S(X)= \ell_S((0):_X \n)$, we get $\n{\cdot}X = (0)$, so that $\n{\cdot}\Hom_S(X,M) = (0)$. Therefore $\n {\cdot}M^{\oplus r}\subseteq S{\cdot}(x_1, x_2, \ldots, x_r)$ by exact sequence $(\sharp_3)$. Let $1 \le i \le r$, $f \in M$, and $z \in \fkn$ and write
$$z{\cdot}(0,\ldots, 0,\overset{\overset{i}{\vee}}{f}, \ldots, 0) = v{\cdot}(x_1, x_2, \ldots, x_r)$$
with $v \in S$. Then since  
$zf = vx_i$ and $0 = vx_j$ if $j \ne i$, we get
$\n M \subseteq \a_iM$, where $\a_i = (0):(x_j \mid 1 \le j \le r, ~j \ne i)$. Notice that $\a_i \ne S$, since $r = \mu_S(M) \ge 2$. Therefore
 $\n M = \a_i M$ for all $1 \le i \le r$, so that $\n^2M=(\a_1\a_2)M$, whence $\n^2M = (0)$ because $\a_1\a_2 \subseteq (0):(x_i \mid 1 \le i \le r) = (0)$ (remember that $M$ is a faithful $S$-module; see exact sequence $(\sharp_3)$). Thus $\n M \subseteq (0):_M \n$. Consequently
$$
s = \rmr_S(M) = \ell_S((0):_M\n) \ge \ell_S(\n M) = \ell_S(M)- \ell_S(M/\n M) = e -r = s(r+1)-r.
$$
Hence $0 \ge rs-r = r(s-1)$, which is impossible because $r, s \ge 2$.
The proof of assertion (1) of Theorem \ref{1.4} is now completed.
\end{proof}

Let us prove assertion (2) of Theorem \ref{1.4}.

\begin{proof}[Proof of assertion (2) of Theorem \ref{1.4}]
Enlarging the residue class field $R/\fkm$ of $R$ if necessary and passing to the $\fkm$-adic completion of $R$, without loss of generality  we may assume that $R$ is complete, possessing  infinite residue class field. Let $B = I:I$. Then since $B$ is a module-finite extension of $R$, we get the canonical decomposition $$B \cong \prod_{\fkn \in \Max B}B_\fkn$$
of the ring $B$, 
where $\Max B$ denotes the set of maximal ideals of $B$. Remember that by Lemma \ref{2.1a} the homomorphism of $B$-modules $$t_B : I\otimes_B\Hom_B(I,\rmK_B) \to \rmK_B, \ \ x \otimes f \mapsto f(x)$$ is an isomorphism. Hence for each $\fkn \in \Max B$ we get the canonical isomorphism 
$$
(\sharp)\ \ \  \ \ \ I_\fkn \otimes_{B_\fkn}\Hom_{B_{\fkn}}(I_{\fkn},\rmK_{B_\fkn}) \overset{t_{B_\n}}{\cong}  \rmK_{B_\fkn}
$$
of $B_\fkn$-modules, since $[\rmK_B]_\fkn = \rmK_{B_\fkn}$ (\cite[Satz 5.22]{HK2}). We now choose $f \in \fkm$ so that $fR$ is a reduction of $\fkm$. Hence
$$
6 \ge \e(R) =\e_{fR}^0(R) = \e_{fR}^0(B)= \ell_R(B/fB).
$$
Therefore, because 
$$
\textstyle
\ell_R(B/fB) = \sum_{\fkn \in \Max B}\ell_R(B_\fkn/fB_\fkn) 
\ge \sum_{\fkn \in \Max B}\ell_{B_\fkn}(B_\fkn/fB_\fkn)
\ge \sum_{\fkn \in \Max B}\e(B_\fkn),
$$
we have $\e(B_\fkn) \le 6$ for each $\fkn \in \Max B$. Thus by assertion (1) of Theorem \ref{1.4},
$$
I_{\fkn} \cong B_\fkn \ \ \text{or} \ \ I_{\fkn} \cong \rmK_{B_\fkn}
$$
as an $B_\n$-module. Hence, thanks to  Lemma \ref{2.2}, assertion (2) of Theorem \ref{1.4} now  follows from the following claim. We actually have  $I \cong B$ as a $B$-module if case (i) occurs and  $I \cong \rmK_B$ as a $B$-module  if case (ii) occurs.

\begin{claim} One of the following two cases must occur.
\begin{enumerate}
\item[$(\mathrm{i})$] $I_\fkn \cong B_\fkn$ for every $\fkn \in \Max B$.
\item[$(\mathrm{ii})$] $I_\fkn \cong \rmK_{B_\fkn} ~(= [\rmK_B]_\n)$ for every $\fkn \in \Max B$.
\end{enumerate}
\end{claim}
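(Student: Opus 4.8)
Suppose, toward a contradiction, that neither $(\mathrm{i})$ nor $(\mathrm{ii})$ holds; the plan is to manufacture a nonzero torsion direct summand of $I\otimes_RI^{\vee}$, contradicting the hypothesis that $I\otimes_RI^{\vee}$ is torsionfree. Since $R$ is complete, $B=\prod_{\fkn\in\Max B}B_{\fkn}$, and for every $\fkn$ we already know $I_{\fkn}\cong B_{\fkn}$ or $I_{\fkn}\cong\rmK_{B_{\fkn}}$. The failure of $(\mathrm{i})$ yields a maximal ideal $\fkn_1$ with $I_{\fkn_1}\cong\rmK_{B_{\fkn_1}}$ and $\rmK_{B_{\fkn_1}}\not\cong B_{\fkn_1}$; the failure of $(\mathrm{ii})$ yields $\fkn_0$ with $I_{\fkn_0}\cong B_{\fkn_0}$. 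These are distinct, since $I_{\fkn_0}\cong B_{\fkn_0}$ while $I_{\fkn_1}\not\cong B_{\fkn_1}$. Let $e\in B$ be the idempotent with $eB=B_{\fkn_0}$, set $B'=(1-e)B=\prod_{\fkn\neq\fkn_0}B_{\fkn}$, and split $I=eI\oplus I'$ with $eI=I_{\fkn_0}$ and $I'=(1-e)I$; then $I'\neq(0)$, since it contains the $\fkn_1$-component of $I$.

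Next I would decompose $I^{\vee}$ along the factors of $B$. By the identification carried out in Section~2, $I^{\vee}=\Hom_R(I,\rmK_R)\cong\Hom_B(I,\rmK_B)$; since $B=eB\times B'$ is a product of rings and $\rmK_B\cong\prod_{\fkn}\rmK_{B_{\fkn}}$ (recall $[\rmK_B]_{\fkn}=\rmK_{B_{\fkn}}$), this $\Hom$ splits compatibly, the cross terms vanishing because the idempotents separate the factors. In particular $eI^{\vee}\cong\Hom_{B_{\fkn_0}}(I_{\fkn_0},\rmK_{B_{\fkn_0}})\cong\Hom_{B_{\fkn_0}}(B_{\fkn_0},\rmK_{B_{\fkn_0}})\cong\rmK_{B_{\fkn_0}}\neq(0)$. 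As tensor products distribute over finite direct sums, $I\otimes_RI^{\vee}$ has
$$
I'\otimes_ReI^{\vee}\;\cong\;I'\otimes_R\rmK_{B_{\fkn_0}}
$$
as a direct summand, hence as a submodule.

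It then remains to check that this summand is nonzero but torsion. Nonvanishing: tensoring with $R/\fkm$ gives $(I'/\fkm I')\otimes_{R/\fkm}(\rmK_{B_{\fkn_0}}/\fkm\rmK_{B_{\fkn_0}})$, a tensor product of two nonzero $R/\fkm$-vector spaces by Nakayama's lemma, both $I'$ and $\rmK_{B_{\fkn_0}}$ being nonzero finitely generated $R$-modules. Torsion: tensoring with $F=\rmQ(R)$ yields $(F\otimes_RI')\otimes_F(F\otimes_R\rmK_{B_{\fkn_0}})$, where the idempotent $e\in B\subseteq F$ acts as the identity on $F\otimes_R\rmK_{B_{\fkn_0}}$ (it is the unit of $B_{\fkn_0}$) and as zero on $F\otimes_RI'$ (since $I'$ is a $(1-e)B$-module), so this tensor product over $F$ vanishes because $e(1-e)=0$. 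Hence $I'\otimes_ReI^{\vee}$ is a nonzero torsion submodule of the torsionfree module $I\otimes_RI^{\vee}$, a contradiction; therefore one of $(\mathrm{i})$, $(\mathrm{ii})$ must hold. The crux of the argument, and the only genuinely subtle point, is recognizing that when $B$ is not local it is precisely the ``mixed'' possibility that must be excluded, and that such a possibility is obstructed by an \emph{off-diagonal} tensor factor $I'\otimes_ReI^{\vee}$ which, although it dies over $F$, is nonzero over $R$; once this is seen, verifying the splitting of $\Hom_B(I,\rmK_B)$ over the factors of $B$ and keeping track of which idempotent annihilates which piece is routine.
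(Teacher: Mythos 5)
Your proof is correct, and it takes a genuinely different route from the paper's. The paper first invokes the bound $\e(R)\le 6$ to force $\sharp\Max B\le 4$, shows the ``mixed'' case can only survive when $\Max B=\{\fkn_1,\fkn_2\}$ with both factors non-Gorenstein, and then compares minimal numbers of generators using $I\oplus I^\vee\cong B\oplus\rmK_B$ together with $\rmK_B\cong I\otimes_RI^\vee$, obtaining $r+s=\mu_R(B)+rs$ and hence $\mu_R(B)=1$, contradicting non-locality of $B$. You instead split along an idempotent $e$ of $B$ and observe that the off-diagonal summand $(1-e)I\otimes_R eI^\vee$ of $I\otimes_R I^\vee$ is nonzero over $R/\fkm$ by Nakayama yet dies over $F$ because $e$ acts as $0$ on one factor and as $1$ on the other; a nonzero torsion direct summand contradicts torsionfreeness. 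This buys two things: your argument does not use the multiplicity bound at all (that bound is only needed earlier to know $I_\fkn\cong B_\fkn$ or $\rmK_{B_\fkn}$), and in fact it proves the stronger assertion that $B=I:I$ is local whenever $I$ is faithful and $I\otimes_RI^\vee$ is torsionfree, so the ``mixed'' alternative never even arises. One step worth spelling out explicitly is why $e$ kills $F\otimes_R I'$: the canonical map $I'\to F\otimes_R I'$ is $B$-linear, since for $b\in B$ one picks a nonzerodivisor $q\in R$ with $qb\in R$ and, using that $q$ is a unit of $F$, checks $1\otimes bx=b\otimes x$ in $F\otimes_RI'$; then $e$ annihilates the $F$-module generators $1\otimes x$ because $ex=0$ in $I'$.
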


\noindent
{\bf {\it Proof of Claim 1.}}
Assume the contrary. Then $B$ is neither a local  ring nor a Gorenstein ring.  We firstly choose $\fkn_1 \in \Max B$ so that $B_{\fkn_1}$ is not a Gorenstein ring. Then $\e (B_{\fkn_1}) \ge 3$, because $B_{\fkn_1}$ is a hypersurface if  $\e(B_{\fkn_1}) \le 2$. Choose  $\fkn_2 \in \Max B \setminus \{\n_1\}$. Then since
$$\textstyle
6 \ge \sum_{\fkn \in \Max B}\e (B_{\fkn}) \ge \e(B_{\fkn_1}) +  \e(B_{\fkn_2}) \ge 4,
$$
we see $\sharp \Max B \le 4$. If $\sharp \Max B = 3$ or $4$, then $B_\n$ is a Gorenstein ring for each $\n \in \Max B\setminus \{ \n_1\}$ (since $\e(B_\n) \le 2$). Nevertheless, if $B_\n$ is a Gorenstein ring for every $\n \in \Max B\setminus \{ \n_1\}$, we then have $I_{\fkn} \cong B_{\fkn}\cong \rmK_{B_{\fkn}}$, so that  if $I_{\fkn_1} \cong B_{\fkn_1}$ then $I_{\fkn} \cong {B_{\fkn}}$ for every $\fkn \in \Max B$, and if $I_{{\fkn}_1}\cong \rmK_{B_{\fkn_1}}$ then $I_{\fkn} \cong \rmK_{B_\fkn}$ for every $\fkn \in \Max B$, which is impossible. Thus $\Max B = \{ \n_1, \n_2  \}$ and $B_{\n_2}$ is also not a Gorenstein ring. Without loss of generality we may assume that $I_{B_{\n_1}} \cong B_{\n_1}$ and $I_{B_{\n_2}} \cong \rmK_{B_{\n_2}}$. Hence 
$$
I \oplus I^{\vee} \cong B \oplus \rmK_B
$$
as a $B$-module, 
because $(I^{\vee})_{\n_1} \cong \rmK_{B_{\n_1}}$ and  $(I^{\vee})_{\n_2} \cong B_{\n_2}$. We set $L = \Im(I \otimes_R I^{\vee} \overset{t}{\longrightarrow} {\rm K}_R)$. Then $\mu_R(L) = rs$, as $L \cong I\otimes_RI^\vee$, while we get 
$$
I \oplus I^{\vee} \cong L^{\vee} \oplus L
$$
as an $R$-module, because  $L^{\vee} = I:I = B$ and hence $\rmK_B = L$ (see Section 2). Consequently, setting $q = \mu_R(B) \ge 1$, we have $r+s = q+rs$. Thus $1-q =(r-1)(s-1) \ge 0$, whence $q = 1$, so that $B= R$. This is impossible, because $B$ is not a local ring.
\end{proof}

The following is a direct consequence of Theorem \ref{1.4}  (2).

\begin{cor}\label{3.4}
Let $R$ be a Gorenstein local ring with $\dim R = 1$ and $\e(R) \le 6$. Let $I$ be a faithful ideal of $R$. If $I \otimes_R\Hom_R(I,R)$ is torsionfree, then $I$ is a principal ideal.
\end{cor}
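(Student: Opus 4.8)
The plan is to obtain this statement immediately from assertion (2) of Theorem \ref{1.4}; the only role of the Gorenstein hypothesis is to collapse the two alternatives in the conclusion of Theorem \ref{1.4}(2) into one. So the proof will be short, and the only points requiring care are purely formal.

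First I would record that, since $R$ is a Gorenstein local ring of dimension one, its canonical module satisfies $\rmK_R \cong R$ as an $R$-module. Consequently, for the faithful ideal $I$ there is a natural identification
$$
I^{\vee} = \Hom_R(I, \rmK_R) \cong \Hom_R(I, R),
$$
and hence $I \otimes_R I^{\vee} \cong I \otimes_R \Hom_R(I,R)$ as $R$-modules. By hypothesis the right-hand side is torsionfree, so $I \otimes_R I^{\vee}$ is torsionfree. Since $\e(R) \le 6$, assertion (2) of Theorem \ref{1.4} applies and gives $I \cong R$ or $I \cong \rmK_R$ as an $R$-module; because $\rmK_R \cong R$, in either case $I \cong R$.

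It then remains to note the elementary fact that a faithful ideal of $R$ which is isomorphic to $R$ as a module is generated by a single nonzerodivisor, hence principal: writing the isomorphism $R \xrightarrow{\sim} I$ and letting $a \in I$ be the image of $1$, one has $I = aR$ with $a$ a nonzerodivisor. I do not expect any genuine obstacle; the statement is a true corollary, and the only thing worth double-checking is that the Gorenstein assumption legitimately substitutes $\Hom_R(I,R)$ for $\Hom_R(I,\rmK_R)$ in the hypotheses of Theorem \ref{1.4} (which it does, via $\rmK_R\cong R$), together with the faithfulness of $I$ needed to invoke that theorem.
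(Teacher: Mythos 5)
Your argument is correct and is exactly what the paper intends: the paper states Corollary \ref{3.4} as ``a direct consequence of Theorem \ref{1.4}\,(2)'' with no further proof, and your observations that $\rmK_R\cong R$ identifies $\Hom_R(I,\rmK_R)$ with $\Hom_R(I,R)$ and collapses the two alternatives are precisely the routine reductions being left implicit. Nothing more is needed.
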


Let us  give a proof of Corollary \ref{1.5}.

\begin{proof}[Proof of Corollary \ref{1.5}]
Assume the contrary and choose $R$ so that $d = \dim R$ is the smallest among counterexamples. Then $d \ge 2$ by  Theorem \ref{1.4} (2). Let $\fkp \in \Spec R \setminus \{\fkm\}$. Then because $I_\fkp$ is a faithful ideal of $R_\fkp$ and $I_\fkp \otimes_{R_\fkp}\Hom_{R_\fkp}(I_{\fkp},R_\fkp) = [I\otimes_R\Hom_R(I,R)]_{\fkp}$ is $R_\fkp$-reflexive, the minimality of $d=\dim R$ shows $I_\fkp \cong R_\fkp$ as an $R_\fkp$-module. Hence by Auslander's theorem (see \cite[Theorem 3.4]{Celikbas-Dao}) $I$ is $R$-free, which is impossible. 
\end{proof}

Let $R$ be a Cohen--Macaulay local ring with maximal ideal $\fkm$ and $\dim R = 1$. We denote by  $\overline{R}$ the integral closure of $R$ in the total ring $\rmQ (R)$ of fractions. Assume that $\overline{R}$ is a finitely generated $R$-module. Then the $\fkm$-adic completion $\widehat{R}$ of $R$ is a reduced ring, so that $R$ possesses a canonical ideal $K$ (\cite[Satz 6.21]{HK2}), that is a fractional ideal of $R$ such that $\widehat{R}\otimes_RK \cong \rmK_{\widehat{R}}$ as an $\widehat{R}$-module. In particular, $R$ possesses a canonical module $K=\rmK_R$.  We furthermore have the following.

\begin{thm}\label{3.3} Let $R$ be as above and  let $I$ be a faithful fractional ideal of $R$. If $I \otimes_RI^{\vee}$ is torsionfree, then $I\cong R$ or $I\cong \rm{K}_R$ as an $R$-module.
\end{thm}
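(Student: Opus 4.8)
The plan is to run the proof of assertion~(2) of Theorem~\ref{1.4} with $B=I:I$ in the leading role, the hypothesis that $\overline R$ is module-finite over $R$ taking the place of completeness. First I would record that $B$ is again a ring of the kind considered: it is a one-dimensional Cohen--Macaulay semilocal ring, module-finite over $R$, with the same total quotient ring $F$ and the same integral closure $\overline B=\overline R$, which is module-finite over $B$; hence $B$ has a canonical module $\rmK_B=B^\vee$, and $(\rmK_B)_\n=\rmK_{B_\n}$ for every $\n\in\Max B$ by \cite[Satz 5.22]{HK2}. Since $I\otimes_RI^\vee$ is torsionfree, Lemma~\ref{2.1a} applied with $S=B$ shows that the evaluation map $t_B\colon I\otimes_B\Hom_B(I,\rmK_B)\to\rmK_B$ is an isomorphism of $B$-modules; hence, by the Principle of change of rings (Proposition~\ref{2.2}), it suffices to prove that $I\cong B$ or $I\cong\rmK_B$ as a $B$-module.

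Next I would localize at each $\n\in\Max B$. Localizing $t_B$ gives an isomorphism $I_\n\otimes_{B_\n}\Hom_{B_\n}(I_\n,\rmK_{B_\n})\cong\rmK_{B_\n}$, which is exactly the hypothesis of assertion~(1) of Theorem~\ref{1.4} for the one-dimensional Cohen--Macaulay local ring $B_\n$, and $\e(B_\n)\le\e(R)\le 6$: choosing $f\in\m$ with $fR$ a reduction of $\m$, one has, just as in the proof of Theorem~\ref{1.4}(2),
\[
\e(R)=\e_{fR}^0(B)=\ell_R(B/fB)=\sum_{\n\in\Max B}\ell_R(B_\n/fB_\n)\ \ge\ \sum_{\n\in\Max B}\e(B_\n).
\]
Therefore assertion~(1) of Theorem~\ref{1.4}, applied over $B_\n$, yields $\mu_{B_\n}(I_\n)=1$ or $\mu_{B_\n}(\Hom_{B_\n}(I_\n,\rmK_{B_\n}))=1$; together with the displayed local isomorphism this means $I_\n\cong B_\n$ or $I_\n\cong\rmK_{B_\n}$, for every $\n\in\Max B$.

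The remaining --- and, I expect, hardest --- step is to globalize, since $B$ is only semilocal, not a finite product of local rings as it would be over a complete base. Running the dichotomy of Claim~1 from the proof of Theorem~\ref{1.4}(2): if $B$ is local the two alternatives cause no ambiguity, and if $B$ is not local then the ``mixed'' possibility leads, via $L=\Im(I\otimes_RI^\vee\xrightarrow{t}\rmK_R)=\rmK_B$ and a comparison of numbers of minimal generators, to an isomorphism $I\oplus I^\vee\cong B\oplus\rmK_B\cong L^\vee\oplus L$ of $R$-modules and hence to $\mu_R(B)=1$, i.e.\ $B=R$, a contradiction; so one of the two cases holds at every $\n$. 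From the uniform local statement one concludes globally: if $I_\n\cong B_\n$ for all $\n$ then $I$ is a finitely generated $B$-module that is locally free of rank one, hence invertible, hence free over the semilocal ring $B$, so $I\cong B$; if $I_\n\cong\rmK_{B_\n}$ for all $\n$ then $\Hom_B(\rmK_B,I)$ is likewise locally free of rank one, hence free, hence $\cong B$, so that the evaluation map gives $I\cong\rmK_B$. (Alternatively one may pass to $\widehat{R}$, which is again a ring ``as above'' since it is reduced, check that torsion-freeness of $I\otimes_RI^\vee$ ascends --- by flat base change, using that $\widehat{R}$ is unmixed --- invoke the complete case, and descend the resulting isomorphism, since isomorphisms of finitely generated modules descend along $R\to\widehat{R}$.) In either case Proposition~\ref{2.2} now gives $I\cong R$ or $I\cong\rmK_R$. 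The only other things to verify are the formal identities used above, namely $\rmK_B=B^\vee$, its compatibility with localization, $L=\rmK_B$, and the behaviour of $\mu$ and $\e$ under the finite extension $R\subseteq B$; these all hold precisely because $\overline R$ is module-finite over $R$.
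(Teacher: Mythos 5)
Your plan is built on a misreading of the statement. Theorem \ref{3.3} does \emph{not} carry the hypothesis $\e(R)\le 6$: the only assumptions carried over from the preamble ``as above'' are that $R$ is a one-dimensional Cohen--Macaulay local ring, that the integral closure $\overline R$ is a finitely generated $R$-module, and (implicitly, as the proof and the paragraph immediately following make clear) that $\fkm\overline R\subseteq R$, i.e.\ $R$ has maximal embedding dimension. Your argument, however, uses $\e(R)\le 6$ at its crucial step --- you write ``$\e(B_\n)\le\e(R)\le 6$'' so that Theorem \ref{1.4}(1) applies over each localization $B_\n$ --- and this inequality is simply not available. Without it your localization step breaks down. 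Conversely, your argument never once uses $\fkm\overline R\subseteq R$, which is the hypothesis that actually carries Theorem \ref{3.3}. So you have, in effect, reproved Theorem \ref{1.4}(2) under an irrelevant extra assumption, not Theorem \ref{3.3}. (Note also that the statement of Theorem \ref{1.4}(2) already holds for any one-dimensional Cohen--Macaulay local ring with a canonical module; the completion in its proof is a harmless device, so there is no gap there that needs filling by ``$\overline R$ module-finite taking the place of completeness.'')

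The paper's actual proof of Theorem \ref{3.3} is quite different and makes essential use of $\fkm\overline R=\fkm$. After reducing to an ideal $I$ with $fR\subseteq I\subseteq\fkm$ for a reduction $fR$ of $\fkm$, one gets $\fkm^2=f\fkm\subseteq I$, whence $S=R/I$ has $\fkn^2=0$ and a length computation, together with $\mu_R(L)=\mu_R(I\otimes_RI^\vee)=rs$ where $L=IJ\subseteq\rmK_R$ is a fractional ideal, yields
\[
s \;\ge\; \ell_S\bigl((0):_S\fkn\bigr) \;\ge\; \mu_R(\fkm)-r \;=\; \e(R)-r \;\ge\; rs-r,
\]
so $(r-1)(s-1)\le 1$. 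If $r,s\ge 2$ this forces $r=s=2$, and then $\e(R)\le 4$, at which point Theorem \ref{1.4}(2) applies and gives a contradiction. That final invocation of Theorem \ref{1.4}(2) is where the two proofs touch, but the bound $\e(R)\le 6$ is \emph{derived} from the maximal-embedding-dimension computation rather than assumed. Your globalization remarks at the end (locally free of rank one over a semilocal ring is free; or descending an isomorphism along $R\to\widehat R$) are correct facts, but they do not rescue the argument, since the step they are meant to follow cannot be carried out.
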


\begin{proof}
We may assume the residue class field of $R$ is infinite. We may assume $I \subseteq R$. Choose $f \in \fkm$ and $g \in I$ so that $\fkm = \fkm \overline{R} = f \overline{R}$ and $I \overline{R} = g \overline{R}$ (these choices are possible, since $\overline{R}$ is a principal ideal ring and the residue class field of $R$ is infinite). Then, because $g$ is invertible in $\rmQ (R)$ and $fR \subseteq \frac{f}{g}I \subseteq f\overline{R} = \fkm \overline{R},$
replacing $I$ with $\frac{f}{g}I$, without loss of generality we may assume $fR \subseteq I \subseteq \fkm \overline{R}=\fkm.$ We set $S = R/I$, $\fkn = \fkm/I$, $r = \mu_R(I)$, and $s = \mu_R(I^{\vee})$. Then $\fkn^2= (0)$, since $\fkm^2 = f \fkm \subseteq I$. Therefore $$\ell_S((0):_S\fkn) \ge \ell_S(\n) = \mu_S(\fkn) \ge \mu_R(\fkm) - r.$$ On the other hand, taking the $\rmK_R$-dual of the short exact sequence 
$0 \to I \to R \to S \to 0,$
 we get an epimorphism 
$I^{\vee} \to \Ext_R^1(S,\rmK_R).$
Therefore $s \ge \mu_R(\Ext_R^1(S,\rmK_R)) = \ell_S((0):_S\fkn)$ (\cite[Satz 6.10]{HK2}). Let $J = \rmK_R : I ~(\cong I^\vee)$ and set $L = IJ \subseteq \rmK_R$. Then $L \cong I\otimes_RI^\vee$ by Lemma \ref{2.1}. Hence $\e (R) \ge \mu_R(L)=rs$
(\cite[Chapter 3, 1.1. Theorem ]{S2}), because $\rmK_R$ is (and hence $L$ is) a fractional ideal of $R$. Thus
$$s \ge \ell_S((0):_S\fkn) \ge \mu_R(\fkm) - r = \e (R) -r \ge rs - r$$ 
(remember that $\mu_R(\fkm) = \e (R)$, since $\fkm^2 = f \fkm$; see \cite[Theorem 1]{S1}),
so that $1 \ge (r-1)(s-1).$
Consequently, if $r, s \ge 2$, then $r = s= 2$, whence $2 \ge \e (R) - 2$,  that is $\e (R) \le 4$. This violates Theorem \ref{1.4} (2). Thus $r = 1$ or $s = 1$, whence $I \cong R$ or $I \cong \rmK_R$ as an $R$-module.
\end{proof}

The following ring $R$ is  a Cohen--Macaulay local ring with $\dim R = 1$ and $\fkm \overline{R} \subseteq R$.

\begin{cor}
Let $S$ be a regular local ring with maximal ideal $\n$ and $n = \dim S > 0$. Let $x_1, x_2, \ldots, x_n$ be a regular system of parameters of $S$. For each $1 \le i \le n$ let $$\fkp_i = (x_j \mid 1 \le j \le n, ~j \ne i).$$ We set $R = S /\bigcap_{i=1}^n\fkp_i.$ Let $I$ be a faithful ideal of $R$. If $I\otimes_R\Hom_R(I,\rmK_R)$ is torsionfree, then $I \cong R$ or $I \cong \rmK_R$ as an $R$-module.
\end{cor}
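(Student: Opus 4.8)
The plan is to verify that the ring $R = S/\bigcap_{i=1}^n\fkp_i$ satisfies the hypotheses of Theorem~\ref{3.3} — it is a one–dimensional Cohen--Macaulay local ring whose normalization $\overline R$ is a finitely generated $R$-module with $\fkm\overline R\subseteq R$ — after which the corollary is exactly Theorem~\ref{3.3} applied to $I$, which, being a faithful ideal of $R$, is in particular a faithful fractional ideal belonging to $\calF$ (it meets no minimal prime of the reduced ring $R$, so $FI=F$).

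The first step is to pin down the structure of $R$. Each $\fkp_i$ is generated by $n-1$ members of a regular system of parameters of the regular local ring $S$, so it is a prime ideal of height $n-1$ and $S/\fkp_i$ is a DVR with maximal ideal $\fkm_i$ generated by the image of $x_i$. Hence $R$ is local and reduced (a quotient of $S$ by an intersection of primes), with $\dim R = \max_i\dim S/\fkp_i = 1$; a reduced Noetherian local ring of dimension one has positive depth, so $R$ is Cohen--Macaulay. Its minimal primes are the $\fkq_i = \fkp_i/\bigcap_j\fkp_j$ (no $\fkp_i$ contains another, since $x_i\notin\fkp_i$), so $\rmQ(R)\cong\prod_{i=1}^n\rmQ(S/\fkp_i)$; because each $S/\fkp_i = R/\fkq_i$ is already normal, the integral closure of $R$ in $\rmQ(R)$ is $\overline R\cong\prod_{i=1}^n S/\fkp_i$, with $R$ embedded via $\bar s\mapsto(s\bmod\fkp_1,\dots,s\bmod\fkp_n)$. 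This extension is module--finite: each $S/\fkp_i$ is a finitely generated $S$-module, hence so is $\overline R$, and $\bigl(\bigcap_j\fkp_j\bigr)\overline R = 0$ since $\bigcap_j\fkp_j\subseteq\fkp_i$ for all $i$; therefore $\overline R$ is finitely generated over $R = S/\bigcap_j\fkp_j$, and in particular $R$ possesses a canonical module.

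The remaining and most delicate point is $\fkm\overline R\subseteq R$, with $\fkm = (\bar x_1,\dots,\bar x_n)R$; I would check it one generator at a time. Fix $k$. Since $x_k\in\fkp_i$ for all $i\ne k$ while $x_k\bmod\fkp_k$ is a uniformizer of the DVR $S/\fkp_k$, the ideal $\bar x_k\overline R$ of $\overline R = \prod_i S/\fkp_i$ vanishes in every factor except the $k$-th, where it equals $\fkm_k$; so it is enough to see that every tuple $(0,\dots,0,c,0,\dots,0)$ with $c\in\fkm_k$ in the $k$-th slot lies in the image of $R$. But $x_k$ itself belongs to $\bigcap_{i\ne k}\fkp_i$ and maps to a uniformizer of $S/\fkp_k$, so the image of $\bigcap_{i\ne k}\fkp_i$ in $S/\fkp_k$ is an ideal containing $\fkm_k$; choosing $s\in\bigcap_{i\ne k}\fkp_i$ with $s\bmod\fkp_k = c$ produces $\bar s\in R$ mapping to that tuple. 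Thus $\bar x_k\overline R\subseteq R$ for each $k$, and hence $\fkm\overline R = \sum_{k=1}^n\bar x_k\overline R\subseteq R$.

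With all the hypotheses of Theorem~\ref{3.3} in place, the conclusion is immediate: if $I\otimes_R\Hom_R(I,\rmK_R)$ is torsionfree, then $I\cong R$ or $I\cong\rmK_R$ as an $R$-module. I expect the only genuine work to be the identification of $\overline R$ and the bookkeeping behind $\fkm\overline R\subseteq R$. It is worth noting that no bound on the multiplicity $\e(R) = n$ is required — which is precisely why Theorem~\ref{3.3}, rather than Theorem~\ref{1.4}(2), is the relevant tool.
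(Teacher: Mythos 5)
Your proposal is correct and follows the same route as the paper: the paper merely remarks, in the sentence immediately preceding the corollary, that $R = S/\bigcap_{i}\fkp_i$ is a one-dimensional Cohen--Macaulay local ring with $\fkm\overline{R}\subseteq R$, and then the corollary is an application of Theorem~\ref{3.3}. You have simply carried out that verification in detail --- identifying $\overline{R}\cong\prod_i S/\fkp_i$, noting module-finiteness, and checking $\bar x_k\overline{R}\subseteq R$ coordinatewise using $x_k\in\bigcap_{i\ne k}\fkp_i$ --- which is exactly the intended argument.
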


The following result is proved similarly as Theorem \ref{3.3}. Let us note a brief proof, because the result might have its own significance. Let $\rmv (R) = \mu_R(\fkm)$ denote the embedding dimension of $R$.

\begin{thm}\label{3.3a} Let $R$ be a Cohen--Macaulay local ring with maximal ideal $\fkm$ and $\dim R = 1$. Assume that $R$ possesses a canonical module $\rmK_R$ and $\rmv (R) = \e (R)$. Let $I$ be a faithful ideal of $R$. We set  $r=\mu_R(I)$ and $s=\mu_R(\Hom_R(I,\rmK_R))$. If $rs = \rmr(R)$, then $I \cong R$ or $I \cong \rmK_R$ as an $R$-module.
\end{thm}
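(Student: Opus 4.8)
The plan is to transcribe the proof of Theorem \ref{3.3} almost verbatim, with minimal multiplicity taking over the role played there by the finiteness of $\overline{R}$. First I would enlarge the residue field so that $R/\fkm$ is infinite and, using $\rmv(R)=\e(R)$, pick $f\in\fkm$ with $\fkm^2=f\fkm$; then $fR$ is a reduction of $\fkm$, $\ell_R(R/fR)=\e(R)$, and the maximal ideal of $R/fR$ is square‑zero, so that $(0):_{R/fR}\fkm=\fkm/fR$ unless $R$ is a DVR. If $R$ is a DVR we are done, since then $rs=\rmr(R)=1$ forces $r=s=1$ and $I$ is principal; otherwise $\rmr(R)=\rmr(R/fR)=\ell_R(\fkm/fR)=\e(R)-1$, so the hypothesis $rs=\rmr(R)$ reads $\e(R)=rs+1$. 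Replacing $I$ by an isomorphic fractional ideal, I would then arrange that $I$ is an $\fkm$-primary ideal with $fR\subseteq I\subseteq\fkm$; if $I$ is principal we already have $I\cong R$, so we may assume $r\ge2$ and $I\subseteq\fkm$, and then $\fkm^2=f\fkm\subseteq fR\subseteq I$.

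The main estimate then proceeds exactly as in Theorem \ref{3.3}. Put $S=R/I$ and $\fkn=\fkm/I$; since $\fkm^2\subseteq I$ we have $\fkn^2=0$, hence $\fkn\subseteq(0):_S\fkm$. Dualizing $0\to I\to R\to S\to0$ into $\rmK_R$ and using $\Hom_R(S,\rmK_R)=0$ (as $S$ has finite length while $\rmK_R$ is maximal Cohen--Macaulay, hence of positive depth) together with $\Ext^1_R(R,\rmK_R)=0$, one obtains an epimorphism $I^{\vee}\twoheadrightarrow\Ext^1_R(S,\rmK_R)$, so that
$$
s=\mu_R(I^{\vee})\ge\mu_R(\Ext^1_R(S,\rmK_R))=\ell_S\bigl((0):_S\fkm\bigr)\ge\ell_S(\fkn)
$$
by \cite[Satz 6.10]{HK2}. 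Finally $\ell_S(\fkn)=\ell_R(\fkm/I)=\ell_R(\fkm/\fkm^2)-\ell_R(I/\fkm^2)=\e(R)-\ell_R(I/\fkm^2)\ge\e(R)-\mu_R(I)$, the last inequality because $\fkm I\subseteq\fkm^2$ yields a surjection $I/\fkm I\twoheadrightarrow I/\fkm^2$, while $\ell_R(\fkm/\fkm^2)=\mu_R(\fkm)=\e(R)$ by hypothesis. Combining, $s\ge\e(R)-r=rs+1-r$, i.e. $(r-1)(s-1)\le0$; as $r\ge2$ this forces $s=1$, so $I^{\vee}$ is principal, $I^{\vee}\cong R$, and therefore $I\cong I^{\vee\vee}\cong R^{\vee}=\rmK_R$.

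I expect the one delicate point to be the normalization in the first step: producing an isomorphic copy of $I$ sandwiched between $fR$ and $\fkm$. In Theorem \ref{3.3} this was immediate from $\overline{R}$ being a principal ideal ring; here it must instead be extracted from minimal multiplicity. Equivalently, one has to verify the socle inequality $\ell_R\bigl((0):_{R/I}\fkm\bigr)\ge\e(R)-\mu_R(I)$ for an arbitrary $\fkm$-primary ideal $I$ of a ring of minimal multiplicity (this is what makes the chain of inequalities in the second step go through without the sandwich hypothesis). Once this is in place, everything else is a routine repetition of the proof of Theorem \ref{3.3}.
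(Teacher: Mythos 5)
You correctly flag the sandwich $fR\subseteq I\subseteq\fkm$ (after rescaling $I$ by an element of $\rmQ(R)$) as the delicate step, but it is unobtainable in general, not merely unproved. Take $R=k[[t^3,t^7,t^8]]$, which has $\rmv(R)=\e(R)=3$, with $f=t^3$ (so $\fkm^2=f\fkm$), and take $I=(t^7,t^8)$. Any $\rho\in\rmQ(R)$ with $fR\subseteq\rho I$ must have $t$-adic order $\le -4$, while $\rho I\subseteq\fkm$ forces order $\ge -4$; so $\rho=t^{-4}u$ with $u$ a unit of $V=k[[t]]$, and then $\rho t^8=t^4u\in\rho I$ has nonzero $t^4$-coefficient and lies outside $R$. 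Hence no rescaled copy of $I$ sits between $fR$ and $\fkm$. The gap is not harmless: your inequality $\ell_S\bigl((0):_S\fkm\bigr)\ge\ell_S(\fkn)$ (with $S=R/I$ and $\fkn=\fkm/I$) rests on $\fkm^2\subseteq I$, and in this example it reads $1\ge 4$, so the whole chain collapses. In particular the socle inequality you propose as an ``equivalent'' substitute is not equivalent to the sandwich, and you have not established it by any other means.

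The paper's proof of Theorem~\ref{3.3a} avoids $R/I$ altogether. It reduces modulo $fR$ and studies $M=I/fI$ over $S=R/fR$ with maximal ideal $\fkn=\fkm/fR$ (note: not your $S$ and $\fkn$). Then $\fkn^2=0$ holds automatically from $\fkm^2=f\fkm$, and one has $\ell_S(M)=\e^0_{fR}(I)=\e(R)=e$, $\mu_S(M)=r$, and $\rmr_S(M)=\ell_S\bigl((0):_M\fkn\bigr)=\mu_R(I^\vee)=s$ by base change of $\Hom_R(-,\rmK_R)$ along $R\to S$. Since $\fkn M\subseteq(0):_M\fkn$, this gives $s\ge\ell_S(\fkn M)=\ell_S(M)-\mu_S(M)=e-r$, and with $e=rs+1$ one gets $(r-1)(s-1)\le 0$. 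So the correct analogue of the ``$S$'' from the proof of Theorem~\ref{3.3} is $R/fR$, not $R/I$: once you make that substitution, minimal multiplicity supplies everything and no normalization of $I$ is required.
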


\begin{proof}
We may assume that $R/\fkm$ is infinite and $e = \e (R) > 1$. Choose $f \in \fkm$ so that $fR$ is a reduction of $\fkm$. Then $\fkm^2 = f\fkm$ and
$$rs= \rmr (R) = \ell_R((0):_{R/fR}\fkm)= \mu_R(\fkm/fR) = \rmv (R) - 1 = e - 1,$$
because $f \not\in \fkm^2$ and $e > 1$. We set $S = R/fR$, $\n = \fkm/fR$, and $M = I/fI$. Then since $\n^2 = (0)$, we get
$$s = \ell_S((0):_M\n) \ge \ell_S(\n M) = \ell_S(M) - \ell_S(M/\n M) = e - r.$$
Hence $0 \ge (r-1)(s-1)$, because $e = rs + 1$. Thus $I \cong R$ or $I \cong \rmK_R$ as an $R$-module.
\end{proof}

Let us examine numerical semigroup rings.

\begin{prop}\label{3.4a}
Let $R=k[[t^a, t^{a+1}, \ldots, t^{2a-1}]]~(a \ge 1)$ be the semigroup ring of the numerical semigroup $H = \left<a, a+1, \ldots, 2a-1 \right>$ over a field $k$. Let $I \ne (0)$ be an arbitrary ideal  of $R$. If $I\otimes_RI^{\vee}$ is torsionfree, then $I\cong R$ or $I \cong \rmK_R$ as an $R$-module.
\end{prop}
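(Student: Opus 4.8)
The plan is to deduce Proposition~\ref{3.4a} directly from Theorem~\ref{3.3}. That theorem applies to a one-dimensional Cohen--Macaulay local ring $R$ whose integral closure $\overline{R}$ in $\rmQ(R)$ is module-finite over $R$ and satisfies $\fkm\overline{R}\subseteq R$; so the only work is to verify that $R=k[[t^a,t^{a+1},\ldots,t^{2a-1}]]$ has these properties and that every nonzero ideal of $R$ is one of the fractional ideals covered there.

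First I would record the elementary semigroup input. For $H=\langle a,a+1,\ldots,2a-1\rangle$ one has $H=\{0\}\cup\{n\in\mathbb{Z}\mid n\ge a\}$, because $n=a+(n-a)\in H$ for every $n\ge 2a$ and $a,\ldots,2a-1$ are generators. Hence $\overline{R}=k[[t]]$, and it is module-finite over $R$ since $\ell_R(\overline{R}/R)=a-1<\infty$; in particular $R$ is a Cohen--Macaulay local domain possessing a canonical module $\rmK_R$. Moreover $\fkm=\bigoplus_{n\ge a}kt^n=t^a\overline{R}$, so $\fkm\overline{R}=t^a\overline{R}=\fkm\subseteq R$. (Equivalently, $R$ has minimal multiplicity: $\e(R)=\rmv(R)=a$ and $\fkm^2=t^a\fkm$.) Since $R$ is a domain, any nonzero ideal $I$ of $R$ is automatically a faithful fractional ideal with $FI=F$, and $I^\vee=\Hom_R(I,\rmK_R)$ is precisely the module occurring in Theorem~\ref{3.3}. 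Thus, once $I\otimes_RI^\vee$ is torsionfree, Theorem~\ref{3.3} gives $I\cong R$ or $I\cong\rmK_R$, which is the assertion.

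I do not expect a genuine obstacle here; all the content sits in the identity $\fkm\overline{R}\subseteq R$, i.e.\ in recognizing $k[[t^a,\ldots,t^{2a-1}]]$ as the numerical semigroup ring of minimal multiplicity. If a self-contained argument is preferred, one reruns the proof of Theorem~\ref{3.3} in this concrete setting: after rescaling $I$ one may assume $t^aR\subseteq I\subseteq\fkm$; since $\fkm^2=t^a\fkm$, the square of $\fkn=\fkm/I$ vanishes in $S=R/I$, so taking the $\rmK_R$-dual of $0\to I\to R\to S\to 0$ gives an epimorphism $I^\vee\to\Ext_R^1(S,\rmK_R)$, whence $\mu_R(I^\vee)\ge\ell_S\big((0):_S\fkn\big)\ge\mu_R(\fkm)-\mu_R(I)=\e(R)-\mu_R(I)$, while $\mu_R(I)\cdot\mu_R(I^\vee)=\mu_R\big(I(\rmK_R:I)\big)\le\e(R)$ by Lemma~\ref{2.1}. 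These two inequalities force $\mu_R(I)=1$ or $\mu_R(I^\vee)=1$ (in the remaining case $\mu_R(I)=\mu_R(I^\vee)=2$ one would get $\e(R)\le4$, excluded by Theorem~\ref{1.4}(2)), and therefore $I\cong R$ or $I\cong\rmK_R$.
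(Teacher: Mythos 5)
Your proof is correct and is essentially the paper's own argument: the paper simply notes that $\overline{R}=k[[t]]$ and $\fkm k[[t]]=\fkm$, then invokes Theorem \ref{3.3}. Your verification of these facts (and the optional self-contained rerun of the Theorem \ref{3.3} argument) fills in exactly the details the paper leaves to the reader.
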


\begin{proof}
This is clear and follows from Theorem \ref{3.3}, since $\overline{R} = k[[t]]$ and $\fkm k[[t]] = \fkm$.
\end{proof}

\begin{cor}\label{3.5}
Let $R=k[[t^a, t^{a+1}, \ldots, t^{2a-2}]]$ $(a\ge3)$ be the semigroup ring of the numerical  semigroup $H=\left<a, a+1, \ldots, 2a - 2\right>$ over a field $k$ and let $I$ be an ideal of $R$. If $I\otimes_R\Hom_R(I,R)$ is torsionfree, then $I$ is principal.
\end{cor}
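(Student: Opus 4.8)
The plan is to feed the change-of-rings machinery of Section~2 into Proposition~\ref{3.4a}. First I would record that $H=\left<a,a+1,\ldots,2a-2\right>$ is a symmetric numerical semigroup: its Frobenius number is $2a-1$ and its set of gaps is exactly $\{1,2,\ldots,a-1\}\cup\{2a-1\}$, so its genus is $a=\tfrac{1}{2}\bigl((2a-1)+1\bigr)$. Hence $R$ is a one-dimensional Gorenstein local domain, $\rmK_R\cong R$, and the statement to be proved is precisely that $I\cong R$ whenever $I\otimes_RI^{\vee}$ is torsionfree, where $I^{\vee}=\Hom_R(I,\rmK_R)$ (this module is isomorphic to $\Hom_R(I,R)$, so the two tensor products are torsionfree or not together). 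We may assume $I\ne(0)$, so that $I$ is a faithful fractional ideal, $I\in\calF$.

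Next I would split on $B=I:I$, which is a module-finite $R$-subalgebra of $\overline R=k[[t]]$. If $B=R$, then Lemma~\ref{2.1a} with $S=R$ says that $t\colon I\otimes_RI^{\vee}\to\rmK_R$ is an isomorphism; applying $\mu_R(-)$ gives $\mu_R(I)\cdot\mu_R(I^{\vee})=\mu_R(\rmK_R)=\rmr(R)=1$, so $\mu_R(I)=1$ and $I\cong R$. Thus it remains to treat the case $B\supsetneq R$, where the goal is to exhibit a ring between $R$ and $B$ to which Proposition~\ref{3.4a} applies.

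The crucial step is: if $B\supsetneq R$ then $t^{2a-1}\in B$, so that $R':=k[[t^a,t^{a+1},\ldots,t^{2a-1}]]=R[t^{2a-1}]$ satisfies $R\subseteq R'\subseteq B$. To see this I would pick $b\in B\setminus R$ and use the $k$-vector space decomposition $k[[t]]=R\oplus\bigl(\bigoplus_{g}k\,t^{g}\bigr)$, the sum running over the gaps $g\in\{1,\ldots,a-1,2a-1\}$, to write $b=r+\gamma$ with $r\in R$ and $0\ne\gamma=\sum_g c_g t^g$; then $\gamma=b-r\in B$. Let $g_0=v(\gamma)$, where $v$ is the $t$-adic valuation. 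If $g_0=2a-1$ then $\gamma=c_{2a-1}t^{2a-1}$, so $t^{2a-1}\in B$ after rescaling by $c_{2a-1}^{-1}\in k\subseteq B$. If $g_0\le a-1$, then $2a-1-g_0\in\{a,\ldots,2a-2\}\subseteq H$, so $t^{2a-1-g_0}\in R\subseteq B$ and $\gamma\,t^{2a-1-g_0}\in B$ has valuation $2a-1$; subtracting from it its part of valuation $\ge 2a$, which lies in the conductor $\mathfrak c=(R:_{\overline R}\overline R)=t^{2a}k[[t]]\subseteq R\subseteq B$, and rescaling, again gives $t^{2a-1}\in B$. Now $I$ is a faithful fractional ideal of $R'$ (it is an $R'$-module because $R'\subseteq B=I:I$, and it is finitely generated over $R'\supseteq R$), so Lemma~\ref{2.1a} applied to $R\subseteq R'\subseteq B$ shows $I\otimes_{R'}\Hom_{R'}(I,\rmK_{R'})$ is torsionfree. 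Proposition~\ref{3.4a} then gives $I\cong R'$ or $I\cong\rmK_{R'}$ as an $R'$-module, and Proposition~\ref{2.2} with $S=R'$ upgrades this to $I\cong R$ or $I\cong\rmK_R$ as an $R$-module; since $\rmK_R\cong R$, $I$ is principal.

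I expect the only real point to be the inclusion $R'\subseteq B$: that is exactly where the special shape of $H$ enters, namely that its unique gap exceeding $a-1$ is the Frobenius number $2a-1$ and that its conductor is $t^{2a}k[[t]]$. The remaining ingredients are formal — computations with fractional ideals together with quotations of Lemma~\ref{2.1a} and Propositions~\ref{2.2} and~\ref{3.4a} — and, pleasantly, the argument is uniform in $a$, requiring no bound on $\e(R)$.
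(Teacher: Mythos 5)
Your proof is correct and follows essentially the same route as the paper's: reduce to the case $B=I:I\supsetneq R$, show $t^{2a-1}\in B$ so that $S=k[[t^a,\ldots,t^{2a-1}]]$ sits between $R$ and $B$, then invoke Lemma~\ref{2.1a}, Proposition~\ref{3.4a}, and Proposition~\ref{2.2}. The one place where you differ is in justifying $t^{2a-1}\in B$: the paper quotes the Herzog--Kunz computation $R:\fkm=R+kt^{2a-1}$ and leaves implicit the standard duality fact that any strict birational overring of a one-dimensional Gorenstein local ring must contain $R:\fkm$ (since $R:B\subseteq\fkm$ and $B=R:(R:B)$ by reflexivity), whereas you argue by hand from the gap structure of $H$ and the conductor $t^{2a}k[[t]]$ -- a more elementary and self-contained derivation of the same inclusion, at the cost of a few more lines. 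Your handling of the degenerate case $B=R$ (reading off $\mu_R(I)\mu_R(I^\vee)=\rmr(R)=1$ from the isomorphism $t$) is also a perfectly good substitute for the paper's observation that $B=R$ would force $I$ to be invertible.
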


\begin{proof}
Notice that $R$ is a Gorenstein local ring with $R:\m=R+kt^{2a-1}$ (see \cite[Satz 3.3, Korollar 3.4]{HK2}). Suppose that $\mu_R(I)>1$ and set $B= I:I$. Then $R\subsetneq B$. In fact, $I\otimes_B \Hom_B(I, \rmK_B)\cong \rmK_B$ by Lemma \ref{2.1a}. Hence, if $B = R$, then $I$ is invertible, so that $I$ must be a principal ideal. Thus $R \subsetneq B$ and therefore  $t^{2a-1} \in B$, whence
$$
R \subseteq S=k[[t^a, t^{a+1}, \ldots, t^{2a-1}]] \subseteq B.
$$
Then  by Lemma \ref{2.1a}
$I\otimes_S \Hom_S(I, \rmK_S)$ is $S$-torsionfree, so that  by Proposition  \ref{3.4a} $I \cong S $ or $I \cong \rmK_S$ as an $S$-module. Hence $I\cong R$ by Proposition \ref{2.2}, which is impossible.
\end{proof}

\begin{rem}\label{3.6} Corollary \ref{3.5} gives a new class of one-dimensional Gorenstein local domains for which Conjecture \ref{1.2} holds true. For example, in Corollary \ref{3.5} take $a = 5$. Then $R=k[[t^5, t^6, t^7, t^8]]$ is not a complete intersection. In fact, let $P= k[[X,Y,Z, W]]$ be the formal power series ring over $k$ and let $\varphi: U \to k[[t]]$ be the $k$-algebra map defined by $\varphi (X) = t^5, \varphi(Y) = t^6, \varphi (Z) = t^7$, and $\varphi (W) = t^8$. We then have $
\Ker\varphi = (Y^2 - XZ, Z^2-YW, W^2-X^2Y, X^3-ZW, XW-YZ)
$ and $\mu_P(\Ker\varphi)=5$.
\end{rem}


\section{Numerical semigroup rings and monomial ideals}

We focus our attention on numerical semigroup rings. Let us fix some notation and terminology.

\begin{setting}\label{setting}
Let $0 < a_1 < a_2< \cdots <a_{\ell}$ be integers such that $\mathrm{gcd}(a_1, a_2, \ldots, a_{\ell}) = 1$. We set $H = \left< a_1, a_2, \ldots, a_{\ell} \right> =\{\sum_{i = 1}^\ell c_i a_i \mid 0 \le c_i \in \Bbb Z\}$  and 
$$R = k[[t^{a_1}, t^{a_2}, \ldots, t^{a_\ell}]]~~~\subseteq ~~~k[[t]],$$
where $V=k[[t]]$ is the formal power series ring over a field $k$. Let $\fkm = (t^{a_1}, t^{a_2}, \ldots, t^{a_\ell})$ be the maximal ideal of $R$. We set $\fkc = R : V$ and $c = \rmc(H)$, the conductor of $H$, whence $\fkc = t^cV$. Let $a = c -1$. We denote by $\calF$ the set of non-zero fractional ideals of $R$. 
\end{setting}

Notice that $R$ is a Cohen--Macaulay local ring  with $\dim R = 1$ and $V$ the normalization. We have $\e (R) = a_1=\mu_R(V)$.

\begin{defn}Let $I \in \calF$. Then $I$ is said to be a monomial ideal, if $I = \sum_{n \in \Lambda}Rt^n$ for some $\Lambda \subseteq \Bbb Z$.
\end{defn}

We denote by $\calM$ the set of monomial ideals $I \in \calF$. Remember that each $I \in \calM$ has a unique  finite subset $\Lambda$ of $\Bbb Z$ such that $\{t^n\}_{n \in \Lambda}$ forms a minimal system of generators for the $R$-module $I$. We are now going to explore Conjecture \ref{1.3} on $I \in \calM$. For the purpose, passing to the monomial ideal $t^{-q}I$ with $q = \min \Lambda$, we may assume
$R \subseteq I \subseteq V$.

A canonical ideal $\rmK_R$ of  $R$ is given by $$\rmK_R = \sum_{n \in \Bbb Z \setminus H}Rt^{a-n}$$ (\cite[Example (2.1.9)]{GW}), where $a = \rmc(H) -1$ (see Setting \ref{setting}). Hence $\rmK_R \in \calM$ with $R \subseteq \rmK_R \subseteq V$ and therefore for each $n \in \Bbb Z$ we get
$$a-n \not\in H  \ \ \Longleftrightarrow  \ \ t^n \in \rmK_R.$$

For the rest of this section let us assume that $e=a_1 \ge 2$. We set $$
\alpha_i = \max \{n \in \Bbb Z \setminus H \mid n \equiv i \mod e\}
$$
for each $0 \le i \le e-1$ and  put
$
{\calS} = \{ \alpha_i \mid 1 \le i \le e-1 \}.
$
Hence $\alpha_0 = -e$, $\sharp {\calS}= e - 1$, $a = \max {\calS}$, and $\alpha_i \ge i$ for all $1 \le i \le e-1$. We then have the following.

\begin{fact}\label{4.4} $(1)$ $\rmK_R = \sum_{s \in {\calS}}Rt^{a-s}$. 

$(2)$ The set $\{t^{a-s} \mid s \in S \ \text{with}\ \fkm t^s \subseteq R\}$ forms a minimal system of generators of $\rmK_R$. 
\end{fact}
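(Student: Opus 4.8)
The plan is to carry out the whole argument with monomials, using the two facts already in place: for an integer $n$ one has $t^n \in \rmK_R \iff a - n \notin H$, and $R$ (resp.\ $\fkm$) is the $k$-span of the monomials $t^h$ with $h \in H$ (resp.\ with $h \in H$ and $h > 0$). I will also use that $\fkc = t^c V$ forces $t^m \in R$ for every integer $m \ge c = a+1$, that $\alpha_0 = -e$ (so that $n \equiv 0 \pmod e$ and $n \notin H$ force $n \le -e$, every non-negative multiple of $e = a_1$ lying in $H$), and that $a = \max \calS$.

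For part (1), the inclusion $\sum_{s \in \calS} R\, t^{a-s} \subseteq \rmK_R$ is immediate: if $s \in \calS$ then $s \notin H$, so $t^{a-s} \in \rmK_R$, and $\rmK_R$ is an $R$-module. For the reverse inclusion I would use $\rmK_R = \sum_{n \notin H} R\, t^{a-n}$ and show $t^{a-n} \in \sum_{s \in \calS} R\, t^{a-s}$ for each $n \notin H$. Writing $n \equiv i \pmod e$ with $0 \le i \le e-1$: if $i \ne 0$ then $\alpha_i \in \calS$ and $\alpha_i \ge n$, with $\alpha_i - n$ a non-negative multiple of $a_1$, so $t^{\alpha_i - n} \in R$ and $t^{a-n} = t^{\alpha_i - n}\cdot t^{a-\alpha_i} \in R\, t^{a-\alpha_i}$; if $i = 0$ then $n \le -e$, so $a - n \ge a + e \ge c$, hence $a - n \in H$ and $t^{a-n} \in R = R\, t^{a-a} \subseteq \sum_{s \in \calS} R\, t^{a-s}$ because $a \in \calS$. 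This settles (1).

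For part (2), I would first isolate the relevant general fact about a nonzero monomial fractional ideal $\mathfrak a$: the ideal $\fkm \mathfrak a$ is again monomial, $\fkm \mathfrak a \subseteq \mathfrak a$, and in fact $\fkm\mathfrak a$ is the $k$-span of $\{t^{h+m} : h \in H,\ h > 0,\ t^m \in \mathfrak a\}$; consequently the images in $\mathfrak a / \fkm\mathfrak a$ of the monomials $t^m \in \mathfrak a$ with $t^m \notin \fkm\mathfrak a$ form a $k$-basis, so a set of monomials of $\mathfrak a$ is a minimal system of generators exactly when it consists of all monomials of $\mathfrak a$ lying outside $\fkm\mathfrak a$. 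Applying this to $\mathfrak a = \rmK_R$ and invoking part (1), every monomial of $\rmK_R$ outside $\fkm\rmK_R$ occurs among the generators $t^{a-s}$, $s \in \calS$, of (1): expanding such a monomial as a finite $R$-combination of the $t^{a-s}$ and then as a $k$-combination of monomials $t^{a-s+h}$ with $h \in H$, some term must have $h = 0$, since any term with $h > 0$ would already place the monomial in $\fkm\rmK_R$. Finally, for $s \in \calS$ a direct monomial computation with the description of $\fkm\rmK_R$ gives $t^{a-s} \in \fkm\rmK_R \iff s + h \notin H$ for some $h \in H$ with $h > 0 \iff s + a_j \notin H$ for some $j \iff \fkm\, t^s \not\subseteq R$. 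Hence the monomials of $\rmK_R$ lying outside $\fkm\rmK_R$ are exactly $\{\, t^{a-s} \mid s \in \calS,\ \fkm\, t^s \subseteq R \,\}$, which is the content of (2).

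I expect the main obstacle to be purely bookkeeping, concentrated in part (2): verifying that $\fkm\rmK_R$ is precisely the span claimed above, and, via that, that no minimal monomial generator of $\rmK_R$ can escape the list $\{t^{a-s}\}_{s \in \calS}$ --- this is the one place where part (1) is genuinely needed. The congruence case analysis in (1) (in particular the residue class $i = 0$, where $-e = \alpha_0$ is not in $\calS$) is routine but also deserves a careful word. Everything else is a mechanical translation between ideal-theoretic statements about $\rmK_R$ and additive statements about the numerical semigroup $H$.
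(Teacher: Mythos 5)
Your proof is correct. The paper records this statement as a ``Fact'' with no proof supplied, so there is nothing of the authors' to compare against; your verification, which translates everything into the semigroup $H$ via the criterion $t^n\in\rmK_R \iff a-n\notin H$ and then invokes the graded Nakayama criterion for monomial fractional ideals, is the natural argument and is carried out without gaps. In part (1) the reduction of a general $n\notin H$ to $\alpha_i\in\calS$ through the observation that $\alpha_i-n$ is a nonnegative multiple of $e=a_1$ is exactly right, and you correctly handle the residue class $i=0$, where $\alpha_0=-e\notin\calS$, by noting $a-n\ge a+e\ge c$, so $t^{a-n}\in R=Rt^{a-a}$ with $a\in\calS$. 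In part (2) the chain $t^{a-s}\in\fkm\rmK_R \iff s+a_j\notin H \text{ for some } j \iff \fkm t^s\not\subseteq R$ is the crux and is correct. One small imprecision: in the expansion $t^m=\sum_{s,h}c_{s,h}\,t^{a-s+h}$ it is the term with exponent $a-s+h=m$, not merely ``some term,'' that must have $h=0$; but this is plainly what you intend, and the argument stands.
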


We now fix an ideal $I \in \calM$ such that $R \subseteq I \subseteq V$ and set $J = \rmK_R : I ~(\cong I^{\vee})$. Then $J \in \calM$ and $J \subseteq \rmK_R \subseteq V$. We assume that 
 the  canonical map
$$
t : I \otimes_RI^{\vee} \to \rmK_R, \ \ x\otimes f \mapsto f(x)
$$
is bijective. Then $1 \in J$ since $1 \in \rmK_R = IJ$, so that $R \subseteq J \subseteq \rmK_R$ and hence $R \subseteq I \subseteq \rmK_R$. We set $\mu_R(I) = r+1, ~\mu_R(J) = s+1~(r, s \ge 0)$ and let us write 
$
I = (t^{c_0},t^{c_1},\cdots, t^{c_r})$ and $J = (t^{d_0}, t^{d_1}, \cdots, t^{d_s})
$
with  integers $c_0 =0 < c_1 < \cdots < c_r$, $d_0 = 0< d_1 < \cdots < d_s$; hence
$$
\rmK_R = (t^{c_i +d_j} \mid 0 \le i \le r, ~0 \le j \le s)
$$
and $\{t^{c_i +d_j}\}_{0 \le i \le r, ~0 \le j \le s}$ is a minimal system of generators of $\rmK_R$, because $\mu_R(\rmK_R) = (r+1)(s+1)$. Therefore $t^{c_r + d_s} \not\in I \cup J$ when $r, s> 0$.

With this notation we have the following.

\begin{thm}\label{4.5} Let $b =\min \calS$ and suppose  $t^b \in R : \m$. Let $I \in \calM$ such that $R \subseteq I \subseteq V$. If the canonical map $t : I \otimes_RI^{\vee} \to \rmK_R$ is an isomorphism, then  $I \cong R$ or $I \cong \rmK_R$.
\end{thm}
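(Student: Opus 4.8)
The plan is to argue by contradiction, keeping the notation fixed just before the statement. Assume $t\colon I\otimes_R I^{\vee}\to\rmK_R$ is an isomorphism, so that $R\subseteq I\subseteq\rmK_R$, $R\subseteq J:=\rmK_R:I\subseteq\rmK_R$, and $\rmK_R=IJ$ is minimally generated by the $(r+1)(s+1)$ pairwise distinct monomials $t^{c_i+d_j}$; in particular $\rmr(R)=\mu_R(\rmK_R)=(r+1)(s+1)$, and by Fact~\ref{4.4}(2) each $a-(c_i+d_j)$ lies in $\calS$ and satisfies $\m\,t^{\,a-(c_i+d_j)}\subseteq R$, i.e.\ is a pseudo-Frobenius number of $H$. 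Since $r=0$ gives $I=R$ and $s=0$ gives $I=\rmK_R$, it suffices to rule out $r,s\ge1$; so assume $r,s\ge1$.

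The first real step is to locate the top minimal generator of $\rmK_R$. Since $t^b\in R:\m$ means $\m\,t^b\subseteq R$, Fact~\ref{4.4}(2) shows $t^{a-b}$ is a minimal generator of $\rmK_R$, and as $b=\min\calS$ the exponent $a-b$ is the largest among $\{a-s\mid s\in\calS\}$, hence the largest exponent among the minimal generators $t^{c_i+d_j}$ of $\rmK_R$. Therefore $c_r+d_s=a-b$, and subtracting gives
\[
\mathrm{PF}(H)=\{\,b+(c_r-c_i)+(d_s-d_j)\mid 0\le i\le r,\ 0\le j\le s\,\}\subseteq[b,a]
\]
for the set of pseudo-Frobenius numbers of $H$. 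Two further facts, forced by minimality of the three generating systems, feed the numerology: every nonzero $c_i+d_j$ is a gap of $H$ (otherwise $t^{c_i+d_j}\in\m\rmK_R$), and the $(r+1)(s+1)$ integers $a-(c_i+d_j)$ occupy pairwise distinct residue classes modulo $e$ (being distinct members of $\calS=\{\alpha_1,\dots,\alpha_{e-1}\}$, with $\alpha_i\equiv i$), so the $c_i+d_j$ do too and, as no gap of $H$ is a multiple of $e$, only $c_0+d_0=0$ sits in the class of $0$.

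The endgame is to play this against Theorem~\ref{1.4}(1): with $\mu_R(I)=r+1\ge2$, $\mu_R(I^{\vee})=\mu_R(J)=s+1\ge2$ and $t$ an isomorphism, that result gives $e=\e(R)>(r+2)(s+1)$. In the clean case $\mathrm{PF}(H)=\calS$ this is immediately contradictory, since then $(r+1)(s+1)=|\mathrm{PF}(H)|=|\calS|=e-1$, whence $e>(r+2)(s+1)=(e-1)+(s+1)=e+s$, impossible. So the main obstacle is to show that $t^b\in R:\m$ with $b=\min\calS$ actually forces $\mathrm{PF}(H)=\calS$, equivalently that every maximal gap of $H$ is pseudo-Frobenius: the natural route is to propagate the relation $b\in\mathrm{PF}(H)$ through the finitely many elements of $\calS$, using that if $s'\in\calS$ and $s'+a_j\notin H$ for some minimal generator $a_j$ then $a_j\neq e$, $s'-b\notin H$, and $s'+a_j$ is again a maximal gap (because $(s'+e)+a_j\in H$), so lies in $\calS$. (Should $\mathrm{PF}(H)=\calS$ not follow outright, the fallback is to show directly that the distinct-residue and gap constraints from the second paragraph, together with the identity $c_r+d_s=a-b$, are incompatible with $e>(r+2)(s+1)$.) Once the contradiction is in hand, $r=0$ or $s=0$, i.e.\ $I\cong R$ or $I\cong\rmK_R$, as desired.
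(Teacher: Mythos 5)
Your proposal's first paragraph and the identification $c_r+d_s = a-b$ are correct and match the opening of the paper's argument. The gap is in the main route: you need the lemma that $t^b\in R:\m$ forces $\mathrm{PF}(H)=\calS$ (equivalently $\mu_R(\rmK_R)=e-1$), and the ``propagation'' sketch you give does not establish it. In fact it cannot, as stated: the paper's own Example \ref{4.7}, namely $H=\langle 7,22,23,25,38,40\rangle$ with $b=15$, is a semigroup where $t^b\in R:\m$ yet $\mathrm{PF}(H)\subsetneq\calS$ (for instance $16\in\calS$, being the largest gap in its class mod $7$, while $16+25=41\notin H$, so $16\notin\mathrm{PF}(H)$); indeed that example is placed there precisely to show the hypothesis does not force $\rmv(R)=e$. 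So any proof from $t^b\in R:\m$ alone is doomed, and you would have to use the existence of $I$ with $r,s\ge1$ in an essential way --- which your sketch does not. The ``fallback'' is only a hope: the residue/gap constraints give $(r+1)(s+1)\le e-1$, which is perfectly consistent with $e>(r+2)(s+1)$ from Theorem \ref{1.4}(1), so no contradiction falls out. Also note that invoking Theorem \ref{1.4}(1) is heavier machinery than the paper needs here.

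The paper's actual argument, after getting $c_r+d_s=a-b$, goes in a different direction and is purely local. Since $r,s\ge1$, the minimal generator $t^{c_r+d_s}$ of $\rmK_R$ lies in neither $I=\rmK_R:J$ nor $J=\rmK_R:I$, so there exist $1\le i\le r$ and $1\le j\le s$ with $t^{c_r+d_s+c_i}\notin\rmK_R$ and $t^{c_r+d_s+d_j}\notin\rmK_R$, i.e.\ $a-(c_r+d_s+c_i)\in H$ and $a-(c_r+d_s+d_j)\in H$, which by $c_r+d_s=a-b$ means $b-c_i\in H$ and $b-d_j\in H$. Minimality of $b$ in $\calS$ then forces $b-c_i\equiv 0\pmod e$ and $b-d_j\equiv 0\pmod e$ (any nonzero residue class has its maximal gap $\ge b$, so an element of $H$ in that class would have to exceed $b>b-c_i$), whence $c_i\equiv d_j\pmod e$, contradicting that $t^{c_i}$ and $t^{d_j}$ are distinct members of a minimal generating set of $\rmK_R$. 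You would do well to abandon the $\mathrm{PF}(H)=\calS$ route and aim instead for this kind of direct arithmetic contradiction.
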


\begin{proof}
Suppose that $r, s > 0$. Then $t^{c_r + d_s}{\cdot}J \not\subseteq \rmK_R$, since $t^{c_r + d_s} \not\in I=\rmK_R : J$. Choose $1 \le j \le s$ so that $t^{c_r + d_s + d_j} \not\in \rmK_R$. We then have $
a - (c_r + d_s + d_j) \in H.
$
Similarly 
$
a - (c_r + d_s + c_i) \in H
$
for some $1 \le i \le r$. 
Thanks to the uniqueness of  minimal systems of generators of the form $\{t^n\}_{n \in \Lambda}$ for a given  monomial ideal, by Fact \ref{4.4} the set $\{c_i + d_j \mid 0 \le i \le r,~0 \le j \le s\}$ is contained in $\{a-s \mid s \in \calS\}$, while  $t^{a-b}$ is a part of a minimal system of generators of $\rmK_R$, since $t^b \in R : \m$ but $t^b \not\in R$. Hence 
$
a - b = c_r + d_s,
$
as $b = \min \calS$.
Therefore $b - c_i, b-d_j \in H$. We set $\alpha = b -c_i,~ \beta = b -d_j$. Suppose that $\alpha \equiv \alpha_k \mod e$ for some $1 \le k \le e-1$. Then since $\alpha \in H$ but $\alpha_k \not\in H$, we get
$\alpha = \alpha_k + en$ for some $n \ge 1$, so that $a \le \alpha_k < \alpha = b - c_i$, because $b = \min {\calS}$. This is impossible. Hence $\alpha \equiv 0 \mod e$. We similarly have $\beta \equiv 0 \mod e$, whence 
$
c_i \equiv d_j \mod e,
$
which implies $t^{c_i} \in Rt^{d_j}$ or $t^{d_j} \in Rt^{c_i}$. This is also impossible, because $\{t^{c_i}, t^{d_j}\}$ is a part of a minimal system of generators of $\rmK_R$. Thus $r = 0$ or $s = 0$, whence $I \cong R$ or $I \cong \rmK_R$ as an $R$-module.
\end{proof}

The following is a special case of Theorem \ref{3.3a}. We note a proof in the present context.

\begin{cor}\label{4.6}
Suppose that $\rmv (R)=e$. Let $I \in \calM$ such that $R \subseteq I \subseteq V$. If the canonical map $t : I \otimes_RI^{\vee} \to \rmK_R$ is an isomorphism, then  $I \cong R$ or $I \cong \rmK_R$.
\end{cor}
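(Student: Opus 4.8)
The plan is to deduce this from Theorem \ref{4.5} by checking that its hypothesis is automatic when $\rmv(R) = e$. Recall the setup: $b = \min \calS$, and we want to show $t^b \in R : \fkm$. Since $R : \fkm$ always contains $R$, and we only need $\fkm t^b \subseteq R$, the task reduces to a purely numerical statement about the semigroup $H$. First I would translate $\rmv(R) = e$ into semigroup language: the embedding dimension $\rmv(R) = \mu_R(\fkm)$ counts the number of generators of $H$ that cannot be written as sums of smaller nonzero elements, so $\rmv(R) = e = a_1$ means that $H$ is generated by $a_1$ and the $a_1 - 1$ smallest positive integers in distinct nonzero residue classes mod $e$, i.e.\ the minimal generators of $H$ are exactly the $e$ smallest elements $0 = w_0 < w_1 < \cdots < w_{e-1}$ representing the residue classes $0, 1, \ldots, e-1$. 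Equivalently, writing $w_i$ for the smallest element of $H$ congruent to $i$ mod $e$ (the Apéry set with respect to $e$), each nonzero $w_i$ is a minimal generator, and so $w_i + w_j$ (for $i,j \neq 0$) is never equal to some $w_k$ — it lies in $H$ but is ``one level up.''

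Next I would relate this to $b = \min \calS$ and the gaps $\alpha_i$. By definition $\alpha_i$ is the largest gap in residue class $i$ mod $e$, so $\alpha_i = w_i - e$ (the element just below the Apéry element $w_i$). Thus $b = \min_{1 \le i \le e-1}(w_i - e)$. Write $b = w_{i_0} - e$ for the index $i_0$ minimizing $w_{i_0}$ among nonzero classes; since $w_1 < w_2 < \cdots$ need not hold in index order, $w_{i_0}$ is simply the smallest nonzero Apéry element. Then for any generator $t^{a_k}$ of $\fkm$, I must show $a_k + b \in H$, i.e.\ $a_k + w_{i_0} - e \in H$. Here is where I expect the main work: if $a_k \equiv e = a_1 \pmod e$ then $a_k + w_{i_0} - e \equiv w_{i_0} \pmod e$ and $a_k + w_{i_0} - e \ge w_{i_0}$ (since $a_k \ge e$), hence it lies in $H$ because $w_{i_0}$ is the smallest element of its class and everything above it in that class with the right form is in $H$ — more carefully, $a_k + w_{i_0} - e = (a_k - e) + w_{i_0}$ with $a_k - e \in H$ (it is $0$ or a sum of generators, using $\rmv(R)=e$ to control which), so the sum is in $H$. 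If instead $a_k \equiv j \pmod e$ with $1 \le j \le e - 1$, then since $a_k$ is itself a minimal generator and $\rmv(R) = e$, we have $a_k = w_j$, and then $a_k + b = w_j + w_{i_0} - e$; I claim this is in $H$ because $w_j + w_{i_0}$ is in $H$ at ``level $\ge 2$'' while subtracting $e$ keeps it in $H$ — concretely $w_j + w_{i_0} - e \ge w_{i_0} \ge $ the Apéry element of its class, or one argues $w_j + w_{i_0} = w_m + (\text{nonneg multiple of } e)$ for $m \equiv j + i_0$, and the multiple is $\ge 1$.

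The cleanest way to package the last step is: $t^b \in R : \fkm$ iff $\fkm \subseteq R : t^b = \{x : t^b x \in R\}$, iff every generator $t^{a_k}$ satisfies $a_k + b \in H \cup \{\text{large}\}$; but in fact it suffices that $\fkm t^b \subseteq R$, and since $\fkm t^b = (t^{a_k + b})$, we need $a_k + b \in H$ for all $k$. I would verify this uniformly by noting $b + e = w_{i_0}$ is the minimal positive value in the Apéry set, so $b + e \le a_k$ for every minimal generator $a_k$ with $a_k \not\equiv 0$, forcing $a_k \ge b + e$ in those classes too (as $a_k = w_j \ge w_{i_0} = b + e$), and then $a_k + b = a_k + (w_{i_0} - e) = (a_k - e) + w_{i_0}$ with $a_k - e \ge 0$. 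When $a_k - e = 0$ we get $w_{i_0} = b + e \in H$; when $a_k - e > 0$, since $a_k - e \equiv a_k \pmod e$ and $a_k - e \ge w_{i_0} - e = b$, ... the only subtle point is confirming $a_k - e \in H$, which follows because either $a_k = e$ (handled) or $a_k \ge 2e$ is impossible for a minimal generator when $\rmv(R)=e$ forces $a_k = w_{\,a_k \bmod e} < 2e$? — no, $w_j$ can exceed $2e$. So the genuinely delicate case is a minimal generator $a_k = w_j$ with $e < w_j$, where I need $w_j + w_{i_0} - e \in H$; I would prove this by observing that $w_j - e = \alpha_j \notin H$ but $w_j - e + w_{i_0}$: since $w_{i_0} \in H$ and $w_{i_0} \ge e$... hmm. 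The hard part will be exactly this modular bookkeeping — showing $w_j + w_{i_0} - e \in H$ for all nonzero $j$ — and I would handle it by the Apéry-set characterization: $w_j + w_{i_0} - e$ has residue $j + i_0 \pmod e$, so it lies in $H$ iff it is $\ge w_{\,j+i_0 \bmod e}$, and this inequality holds because $w_j + w_{i_0}$, being a sum of two Apéry elements, exceeds $w_m$ strictly (as $w_m$ is minimal in its class and $w_j + w_{i_0} \equiv m$ but is not itself minimal — it is $w_m$ plus a positive multiple of $e$, the multiple being at least $1$), giving $w_j + w_{i_0} \ge w_m + e$, i.e.\ $w_j + w_{i_0} - e \ge w_m \in H$. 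Once $t^b \in R : \fkm$ is established, Theorem \ref{4.5} applies verbatim and yields $I \cong R$ or $I \cong \rmK_R$.
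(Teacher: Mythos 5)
Your proof is correct, and it takes a genuinely different route from the paper's.

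\textbf{Comparison.} The paper's argument for Corollary~\ref{4.6} is a short ring-theoretic counting argument: from $\rmv(R)=e$ one gets $\fkm^2 = t^e\fkm$ (maximal embedding dimension), so $\mu_R(\rmK_R)=\rmr(R)=\ell_R(\fkm/t^eR)=e-1$. Since $\sharp\calS=e-1$, Fact~\ref{4.4}(2) forces the defining subset $\{s\in\calS : \fkm t^s\subseteq R\}$ to be all of $\calS$; in particular $\fkm t^b\subseteq R$, and Theorem~\ref{4.5} applies. Note this actually proves the stronger statement that $\fkm t^s\subseteq R$ for \emph{every} $s\in\calS$. Your proof instead works directly in the numerical semigroup: translate $\rmv(R)=e$ into the statement that every nonzero Ap\'ery element $w_j$ is a minimal generator of $H$, observe $b=w_{i_0}-e$ with $w_{i_0}$ the smallest nonzero Ap\'ery element, and then verify $a_k+b\in H$ for each generator $a_k$ by checking $w_j+w_{i_0}\ge w_m+e$ for $m\equiv j+i_0\pmod e$. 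This is more elementary and self-contained (no appeal to the CM-type formula or the structure of $\rmK_R$), at the cost of being targeted only at $s=b$ rather than all of $\calS$. Both proofs make essential use of $\rmv(R)=e$; yours through the fact that each $w_m$ is a minimal generator, the paper's through $\fkm^2=t^e\fkm$.

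\textbf{Two small cleanups in your argument.} First, the justification for $w_j+w_{i_0}>w_m$ should be stated non-circularly: since $w_j,w_{i_0}$ are nonzero elements of $H$, their sum cannot be a minimal generator of $H$, whereas $w_m$ \emph{is} a minimal generator when $m\ne 0$ (this is where $\rmv(R)=e$ enters); hence $w_j+w_{i_0}\ne w_m$, and as both lie in $H$ and in the same residue class, $w_j+w_{i_0}\ge w_m+e$. Your phrasing ``it is $w_m$ plus a positive multiple of $e$'' sounds like it assumes the conclusion. Second, the case $m\equiv 0\pmod e$ should be flagged: there $w_m=w_0=0$, and $w_j+w_{i_0}-e$ is a nonnegative multiple of $e$ (indeed positive, since $w_j,w_{i_0}\ge e+1$), hence in $H$ trivially. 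The rest of the argument, including the explicit handling of the generator $a_k=e$, is fine.
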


\begin{proof}
It suffices to show $\m t^b \subseteq R$. Let $f = t^{e}$. Then $fR : \fkm = \fkm$, since $\m^2 = f\m$. Therefore  
$$
\mu_R(\rmK_R) = \ell_R((fR : \m)/fR)=\ell_R(\m/fR)=\ell_R(R/fR)-1=e-1,
$$ since $e = \ell_R(R/fR)$
(\cite[Lemma 3.1]{HK2}).
Consequently, as $\sharp{\calS} = e-1$, by Fact \ref{4.4}  $\{t^{a-s}\}_{s \in \calS}$ is a minimal system of generators of $\rmK_R$, so that $\m t^b \subseteq R$ as wanted.
\end{proof}

The condition $t^b \in R:\fkm$ does not imply $\rmv (R) = e$, as the following example shows.

\begin{ex}\label{4.7}
Let $H = \left<7, 22, 23, 25, 38, 40\right>$. Then ${\calS} = \{15, 16, 18, 33, 41\}$. We have $a = 41, b = 15$, and $\m{\cdot}t^{15} \subseteq R$, but $\rmv (R) = 6 < e= 7$.
\end{ex}


\section{The case where $\e (R) = 7$}

In this section we explore two-generated monomial ideals in numerical semigroup rings. We maintain Settings \ref{setting} and the notation in Section 4. Let $I \in \calM$ be a monomial ideal of $R$ such that $R \subseteq I \subseteq V$ and set $J = \rmK_R : I$. Suppose that $\mu_R(I) = \mu_R(J) = 2$ and write
$I =  (1, t^{c_1})$ and $J =  (1, t^{c_2})$, where $c_1,c_2>0$.
Throughout this section we assume:

\begin{condition}\label{5.0}
$IJ = \rmK_R$ and $\mu_R(\rmK_R) = 4.$
\end{condition}

\noindent
Hence $\rmK_R$ is minimally generated by $1, t^{c_1}, t^{c_2}, t^{c_1 + c_2}$.
Note that Condition \ref{5.0} is satisfied, once the canonical map $t : I\otimes_RI^{\vee} \to \rmK_R$ is an isomorphism.

We set  $c_3 = c_1 + c_2$. Then thanks to Fact \ref{4.4}, we may choose $b_1, b_2, b_3 \in {\calS}$ such that
$
c_1 = a - b_1, ~~c_2 = a - b_2, ~~c_3=a-b_3.
$
Hence $b_3=b_1 + b_2 -a$.

We begin with the following.

\begin{lem}\label{5.1}
The following assertions hold true.
\begin{enumerate}
\item[$(1)$] $a = b_1 +b_2 -b_3 \not\in H$.
\item[$(2)$] $2b_1 - a = b_1 + b_3 - b_2 \in H$.
\item[$(3)$] $2b_2 -a = b_2 + b_3 - b_1 \in H$.
\item[$(4)$] $b_2 + b_3 - a = 2b_3 - b_1 \in H$.
\item[$(5)$] $b_1 + b_3 - a = 2b_3 - b_2 \in H$.
\item[$(6)$] $2b_2 - b_3 \in H$.
\end{enumerate}
\end{lem}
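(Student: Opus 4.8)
The plan is to translate each membership statement into a condition about monomial ideals, and then exploit the defining relation $IJ = \rmK_R$ together with the characterization $t^n \in \rmK_R \Leftrightarrow a - n \notin H$ from Fact \ref{4.4}. The starting point is that $I = (1, t^{c_1})$ and $J = (1, t^{c_2})$ with $c_i = a - b_i$, so $\rmK_R = (1, t^{c_1}, t^{c_2}, t^{c_3})$ with $c_3 = c_1 + c_2 = 2a - b_1 - b_2$, hence $b_3 = b_1 + b_2 - a$. Assertion $(1)$ is immediate: $a - a = 0 \notin H$ in the sense relevant here is not quite it — rather, $t^0 = 1$ is a minimal generator of $\rmK_R$, so $a - 0 = a \notin H$; and $a = b_1 + b_2 - b_3$ by the relation just derived. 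Assertions $(2)$ and $(3)$ are the real engine: since $\mu_R(\rmK_R) = 4$, the four monomials $1, t^{c_1}, t^{c_2}, t^{c_3}$ are a \emph{minimal} system of generators, so in particular $t^{c_3} = t^{c_1} \cdot t^{c_2}$ cannot be redundant; but also $t^{c_1} \notin Rt^{c_2}$ and $t^{c_2} \notin Rt^{c_1}$. More usefully, $t^{c_3} \notin I$ means $t^{c_1 + c_2} \notin (1, t^{c_1})$, equivalently $c_2 \notin H$ and $c_1 + c_2 - c_1 = c_2 \notin H$... I need instead: $t^{c_3} \notin J = \rmK_R : I$ forces $t^{c_3} \cdot t^{c_1} \notin \rmK_R$ for the right witness, i.e. $t^{2c_1 + c_2} \notin \rmK_R$, giving $a - (2c_1 + c_2) \in H$; unwinding with $c_i = a - b_i$ yields $a - (2(a-b_1) + (a - b_2)) = 2b_1 + b_2 - 2a = b_1 + b_3 - b_2 - a$... let me recompute: this should land on $2b_1 - a$. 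The key identity to verify is $a - (2c_1 + c_2) = a - (2a - 2b_1 + a - b_2) = 2b_1 + b_2 - 2a$, and since $b_3 = b_1 + b_2 - a$ we get $2b_1 + b_2 - 2a = b_1 + b_3 - a = 2b_1 - a$ using $b_3 = b_1 + b_2 - a$ once more; so $2b_1 - a \in H$, which is $(2)$. Symmetrically $(3)$ follows from $t^{c_1 + 2c_2} \notin \rmK_R$.

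For assertions $(4)$, $(5)$, and $(6)$ I would again use $t^{c_3} \notin I \cup J$, now picking witnesses on the other side. Since $t^{c_3} \notin I = \rmK_R : J$, there is some generator $t^{d}$ of $J$ (namely $1$ or $t^{c_2}$) with $t^{c_3 + d} \notin \rmK_R$; the case $d = 0$ is excluded because $t^{c_3} \in \rmK_R$, so necessarily $t^{c_3 + c_2} = t^{c_1 + 2c_2} \notin \rmK_R$, recovering $(3)$ again, and by the symmetric role of $I$, $t^{c_3 + c_1} = t^{2c_1 + c_2} \notin \rmK_R$ recovering $(2)$. To get the remaining three I would instead consider products that must \emph{stay} in $\rmK_R$ because they equal one of the four generators times something in $R$, or use that $b_1, b_2, b_3 \in \calS$ means $a - c_i = b_i$ are the maximal non-gaps in their residue classes. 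Concretely: $2b_3 - b_1 = b_2 + b_3 - a$ should be shown in $H$ by noting $a - (2b_3 - b_1)$... I would rather argue that $t^{c_1} \cdot t^{c_3} \in \rmK_R$ (it is $t^{2c_1 + c_2}$, already handled) versus which shifts land inside. The cleanest route for $(4)$–$(6)$ is: rewrite each as $a - n \in H$ for a suitable $n$, observe $n$ is congruent mod $e$ to one of $b_1, b_2, b_3$ or to $0$, and use maximality of the $b_i$ in $\calS$ (so anything strictly larger in that class, or anything $\equiv 0$, lies in $H$) — exactly the mod-$e$ argument used in the proof of Theorem \ref{4.5}.

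The main obstacle I expect is bookkeeping: keeping straight the dictionary between the "$c$-world" (exponents appearing in $I$, $J$, $\rmK_R$, where membership in $\rmK_R$ is the live notion) and the "$b$-world" (the reflected exponents $a - c_i \in \calS$, where membership in $H$ is the live notion), since the two are swapped by $n \mapsto a - n$ and the hypothesis $IJ = \rmK_R$ couples them nonlinearly through $b_3 = b_1 + b_2 - a$. Each of the six assertions is a one-line algebraic identity once the correct "witness" monomial is identified; identifying that witness, and confirming it is \emph{forced} to lie in or out of $\rmK_R$ by minimality of the generating set $\{1, t^{c_1}, t^{c_2}, t^{c_3}\}$ (Condition \ref{5.0}) and by the maximality defining $\calS$, is where care is needed. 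I would organize the proof as: first establish the relation $b_3 = b_1 + b_2 - a$ and assertion $(1)$; then derive $(2)$ and $(3)$ from $t^{c_3} \notin I$ and $t^{c_3} \notin J$ respectively; then obtain $(4)$ and $(5)$ by applying the mod-$e$ maximality argument to $2b_3 - b_1$ and $2b_3 - b_2$, using $(2)$ and $(3)$ plus $b_i \ge $ their residues; and finally $(6)$ as a combination, e.g. $2b_2 - b_3 = (2b_2 - a) + (a - b_3) = (2b_2 - a) + (b_3 - b_1 - b_2 + a)$ — reducing it to $(3)$ together with a non-gap already in hand.
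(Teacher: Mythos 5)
Your overall strategy is the right one and matches the paper's: translate the ideal-theoretic hypotheses into $H$-membership statements via Fact \ref{4.4}, and mine the non-memberships $t^{c_i} \notin I, J$ and $t^{c_3} \notin I \cup J$ by multiplying against $I$ or $J$ and seeing which monomial is forced out of $\rmK_R$. Assertion $(1)$ is handled correctly. But there is a systematic bookkeeping error that makes the proof as written fail, and a whole portion of the lemma is never actually proved.

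The concrete problem: the witness $t^{c_3}\notin J=\rmK_R:I$ forces $t^{2c_1+c_2}\notin\rmK_R$, i.e.\ $a-(2c_1+c_2)=2b_1+b_2-2a=b_1+b_3-a=2b_3-b_2\in H$, which is assertion $(5)$, \emph{not} $(2)$. Your claim that $b_1+b_3-a=2b_1-a$ is an algebra slip --- it would require $b_3=b_1$, which is false since $t^{c_1}$ and $t^{c_3}$ are distinct minimal generators. Similarly $t^{c_3}\notin I$ forces $t^{c_1+2c_2}\notin\rmK_R$, which gives $(4)$, not $(3)$. The witnesses you never invoke, but which are exactly what $(2)$ and $(3)$ need, are $t^{c_1}\notin J$ and $t^{c_2}\notin I$: from $t^{c_1}\notin\rmK_R:I$ and $t^{c_1}\cdot 1\in\rmK_R$ one gets $t^{2c_1}\notin\rmK_R$, hence $a-2c_1=2b_1-a\in H$, which is $(2)$; and symmetrically $t^{2c_2}\notin\rmK_R$ gives $(3)$. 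So your proposal in effect proves $(1)$, $(4)$, $(5)$ while \emph{believing} it has proved $(1)$, $(2)$, $(3)$, and leaves $(2)$, $(3)$, $(6)$ unproved. Finally, your plan for $(6)$ --- a mod-$e$ maximality argument as in Theorem \ref{4.5} --- is not filled in, and it is not what the paper does: assuming without loss of generality $c_1<c_2$, the paper first observes $c_2-c_1\notin H$ (otherwise $t^{c_1}\mid t^{c_2}$ in $R$, contradicting minimality of the generating set of $\rmK_R$), so $t^{c_2-c_1}\notin J$, and then $t^{c_2-c_1}I\not\subseteq\rmK_R$ together with $t^{c_2}\in\rmK_R$ forces $t^{c_2-c_1}\notin\rmK_R$, i.e.\ $a-(c_2-c_1)=2b_2-b_3\in H$. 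You would need that or an equivalent argument to close $(6)$.
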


\begin{proof} (1) This is clear.

(2)(3) Since $t^{c_1} \not\in J= \rmK_R : I$, we have $t^{c_1}I \not\subseteq \rmK_R$. Therefore by Fact  \ref{4.4} we see that $b_1 + b_3 -b_2 = 2b_1 - a = a - 2c_1 \in H$ because $t^{2c_1} \not\in \rmK_R$. Since $t^{c_2} \not\in I$, we similarly have $b_2 +b_3 - b_1 = 2b_2 -a = a - 2c_2 \in H$.

(4)(5) Since $t^{c_1 + c_2} \not\in I= \rmK_R : J$, we have $t^{c_1+ 2c_2} \not\in \rmK_R$. Therefore
$
2b_3 - b_1 = b_2 + b_3 -a = a - (c_1 + 2c_2) \in H.
$
Since $t^{c_1 + c_2} \notin J$, we have $t^{2c_1 + c_2} \not\in \rmK_R$. Hence
$
2b_3 - b_2 = b_1 + b_3 -a = a - (2c_1 + c_2) \in H.
$

(6) If $c_1 < c_2$, then $t^{c_2 - c_1} \not\in J= (1, t^{c_2})$, so that $t^{c_2 - c_1} \not\in \rmK_R$, because $t^{c_2-c_1}I \not\subseteq \rmK_R$. Hence $2b_2 - b_3 = a - (c_2 -c_1) \in H$. If $c_1 > c_2$, then $2b_2 - b_3 = a -(c_2 - c_1) > a = c - 1$, so that  $2b_2 - b_3 \in H$.
\end{proof}

Since $I = \rmK_R : (\rmK_R : I) = \rmK_R :J$ (\cite[Definition 2.4]{HK2}), we have a symmetry between $I$ and $J$. Hence without loss of generality we may  assume  $0 < c_1 < c_2$. Therefore 
$$
a > b_1 > b_2 > b_3 > 0\quad\text{and}\quad 2b_2 - a,~b_2 + b_3 -a, ~b_1 + b_3 - a \in H.
$$

\begin{lem}\label{5.2}
The following assertions hold true.
\begin{enumerate}
\item[$(1)$] $2b_2 \not\equiv b_1 + b_3 \mod e$.
\item[$(2)$] $2b_1 \not\equiv b_2 + b_3 \mod e$.
\end{enumerate}
\end{lem}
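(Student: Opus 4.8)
The plan is to prove Lemma \ref{5.2} by contradiction, exploiting the complete symmetry between $I$ and $J$ (equivalently, between $b_1$ and $b_2$) together with the containments recorded in Lemma \ref{5.1}. Recall the standing reductions: $0 < c_1 < c_2$, hence $a > b_1 > b_2 > b_3 > 0$, and the four integers $c_1 = a-b_1$, $c_2 = a-b_2$, $c_3 = c_1+c_2 = a - b_3$ are distinct and generate $\rmK_R$ minimally. The key principle I will use throughout is the contrapositive of Fact \ref{4.4}: if $n \equiv \alpha_k \bmod e$ with $n \in H$ and $\alpha_k \notin H$, then $n = \alpha_k + em$ for some $m \ge 1$, so in particular $n \ge \alpha_k + e$; and conversely any $\alpha_i$ is the \emph{largest} non-gap in its residue class. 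Since $b_1, b_2, b_3 \in \calS$ are themselves the maximal non-gaps in their residue classes mod $e$, any non-gap congruent to one of them mod $e$ must be at least that $b_i$, and any gap congruent to $b_i$ mod $e$ is at most $b_i$.

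**Proof of (1).** Suppose for contradiction $2b_2 \equiv b_1 + b_3 \bmod e$. By Lemma \ref{5.1}(3), $2b_2 - a \in H$, and by Lemma \ref{5.1}(5), $b_1 + b_3 - a \in H$. Subtracting, $2b_2 - (b_1 + b_3)$ is a difference of elements of $\mathbb Z$ congruent mod $e$; I will show this forces one of $t^{c_1}, t^{c_3}$ (or $t^{c_2}$) to divide another monomial generator of $\rmK_R$, contradicting minimality of the generating set $\{1, t^{c_1}, t^{c_2}, t^{c_3}\}$. Concretely: from $b_1 + b_2 - b_3 = a$ (Lemma \ref{5.1}(1)) one has $b_1 + b_3 = 2b_1 + b_2 - a$, so the congruence becomes $2b_2 \equiv 2b_1 + b_2 - a$, i.e. $b_2 - b_1 \equiv -a \equiv -c_3 - b_3 \bmod e$... — more cleanly, I would instead argue directly on the $c_i$'s: $2b_2 \equiv b_1 + b_3 \bmod e$ is equivalent to $2(a-c_2) \equiv (a-c_1)+(a-c_3) \bmod e$, i.e. $2c_2 \equiv c_1 + c_3 = c_1 + (c_1+c_2) \bmod e$, i.e. $c_2 \equiv 2c_1 \bmod e$. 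So $t^{c_2}$ and $t^{2c_1}$ lie in the same residue class. Now $t^{2c_1} \notin \rmK_R$ (this is Lemma \ref{5.1}(2), which says $a - 2c_1 \in H$), so $2c_1 - a > 0$ would make it trivial, but in fact I compare within $\rmK_R$: since $t^{c_1} \in \rmK_R$ and $t^{c_1} \cdot t^{c_1} = t^{2c_1} \notin \rmK_R$, while $t^{c_2} \in \rmK_R$, the relation $c_2 \equiv 2c_1 \bmod e$ together with $c_2 < 2c_1$ or $c_2 > 2c_1$ gives the contradiction: if $c_2 > 2c_1$ then $t^{c_2} = t^{2c_1} \cdot t^{c_2 - 2c_1}$ with $c_2 - 2c_1 \in H$ (positive and $\equiv 0$), but then... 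Here the cleanest finish is: $c_2 \equiv 2c_1$ means $a - c_2 \equiv a - 2c_1 \bmod e$, i.e. $b_2 \equiv 2b_1 - a \bmod e$; since $b_2 \in \calS$ is the maximal non-gap in its class and $2b_1 - a \in H$ (Lemma \ref{5.1}(2)) is a non-gap in the same class, we get $2b_1 - a \le b_2$, hence $2b_1 - b_2 \le a$; combined with $b_1 > b_2$ this gives $b_1 \le a$, no contradiction yet — so I must use that $2b_1 - a \equiv b_2 \bmod e$ and $2b_1 - a \in H$ forces $2b_1 - a = b_2 - em$ for some $m \ge 0$ only if $b_2$ is the representative, but $2b_1 - a$ could equal $b_2$ itself; if $2b_1 - a = b_2$ then $b_2 + b_3 - b_1 = b_1 + b_3 - b_2 = 2b_1 - a - (b_1 - b_3) $... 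I will instead observe $2b_1 - a = b_2$ gives $c_1 = a - b_1$ and $2c_1 = 2a - 2b_1 = a - b_2 = c_2$, so $t^{2c_1} = t^{c_2} \in \rmK_R$, contradicting Lemma \ref{5.1}(2) which states $t^{2c_1} \notin \rmK_R$. And if $2b_1 - a < b_2$ strictly, then $b_2 \notin H$ would need $b_2$ to not be a non-gap, which is fine, but now $c_2 = a - b_2 < a - (2b_1 - a) = 2c_1$, and $2c_1 - c_2 \equiv 0 \bmod e$ with $2c_1 - c_2 > 0$; then $t^{2c_1 - c_2} \in R$, so $t^{2c_1} = t^{c_2} \cdot t^{2c_1 - c_2} \in t^{c_2} R \subseteq \rmK_R$, again contradicting Lemma \ref{5.1}(2). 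This completes (1).

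**Proof of (2) and the main obstacle.** By the symmetry between $I$ and $J$ (exchanging $b_1 \leftrightarrow b_2$, $c_1 \leftrightarrow c_2$, which preserves Condition \ref{5.0} and all of Lemma \ref{5.1}), assertion (2) follows from (1) with the roles of $b_1$ and $b_2$ interchanged: the statement $2b_1 \not\equiv b_2 + b_3 \bmod e$ is exactly the image of $2b_2 \not\equiv b_1 + b_3 \bmod e$ under that swap, and the hypotheses (Lemma \ref{5.1}(2),(4)) are the swapped forms of Lemma \ref{5.1}(3),(5). I would state this explicitly rather than redo the computation. The main obstacle I anticipate is purely organizational: in part (1) the congruence $2b_2 \equiv b_1+b_3 \bmod e$ must be massaged — via $b_1 + b_2 - b_3 = a$ — into the transparent form $2c_1 \equiv c_2 \bmod e$ (equivalently $2b_1 - a \equiv b_2 \bmod e$), and then one must split into the cases $2c_1 \ge c_2$ and $2c_1 < c_2$ to produce, in each, a monomial divisibility among the generators of $\rmK_R$ that contradicts the minimality of $\{1, t^{c_1}, t^{c_2}, t^{c_3}\}$ (using Lemma \ref{5.1}(2) to know $t^{2c_1} \notin \rmK_R$). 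Care is needed because the ``maximal non-gap in a residue class'' argument alone does not immediately yield a contradiction; it is the combination with the explicit value $a - 2c_1 \in H$ from Lemma \ref{5.1}(2) that closes the case.
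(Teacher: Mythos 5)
The decisive problem is a reversed inequality, which causes you to split into precisely the two impossible cases and omit the one that can actually occur. You correctly reduce the hypothesis $2b_2\equiv b_1+b_3\bmod e$ to $c_2\equiv 2c_1\bmod e$, equivalently $b_2\equiv 2b_1-a\bmod e$, and you correctly recall from Lemma~\ref{5.1}(2) that $2b_1-a\in H$. But you then write that since ``$b_2\in\calS$ is the maximal non-gap in its class \dots\ we get $2b_1-a\le b_2$.'' This is backwards on two counts. First, $b_2=\alpha_k$ is by definition the maximal \emph{gap} (element of $\Bbb Z\setminus H$) in its residue class, not the maximal non-gap. Second, the correct consequence is the \emph{opposite} inequality: any element of $H$ congruent to $\alpha_k$ mod $e$ must be \emph{strictly larger} than $\alpha_k$, because if $n\in H$, $n\equiv\alpha_k$, $n<\alpha_k$, then $\alpha_k=n+em$ with $m\ge1$ would put $\alpha_k\in H$. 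Hence $2b_1-a>b_2$, i.e.\ $c_2>2c_1$, is forced. Your two cases $2b_1-a=b_2$ and $2b_1-a<b_2$ therefore never arise, and the case you must rule out, $c_2>2c_1$, is the one you never address. In that case the divisibility you exploit breaks down: $2c_1-c_2<0$, so $t^{2c_1}=t^{c_2}t^{2c_1-c_2}$ is not a statement about $R$, and the opposite factoring $t^{c_2}=t^{2c_1}t^{c_2-2c_1}$ gives nothing because $t^{2c_1}\notin\rmK_R$. The extra input needed to close this case is Lemma~\ref{5.1}(6), $2b_2-b_3\in H$, which you never invoke: since $2b_2-b_3\equiv b_1\bmod e$ under the assumed congruence, $2b_2-b_3\in H$, and $b_1\notin H$, the same residue-class principle gives $2b_2-b_3>b_1$; but rewriting $2b_1-a>b_2$ as $b_1+b_3-b_2>b_2$ gives $b_1>2b_2-b_3$, a contradiction. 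This is exactly the paper's argument, and it is essential, not cosmetic.

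On part (2), the symmetry appeal is also not quite safe as stated: once you have normalized to $0<c_1<c_2$ (equivalently $b_1>b_2$), the literal swap $b_1\leftrightarrow b_2$ takes you out of the normalized setting, and Lemma~\ref{5.1}(6) is stated asymmetrically ($2b_2-b_3\in H$, not $2b_1-b_3$). One can patch this (the swapped version $2b_1-b_3\in H$ holds trivially because $2b_1-b_3>a$ when $b_1>b_2$), but the paper avoids the issue entirely: assume $2b_1\equiv b_2+b_3\bmod e$, then $b_2+b_3-b_1\equiv b_1\bmod e$ with $b_2+b_3-b_1\in H$ (Lemma~\ref{5.1}(3)) and $b_1\notin H$, forcing $b_2+b_3-b_1>b_1$; but $b_2+b_3-b_1<b_2<b_1$ since $b_1>b_2>b_3>0$. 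That one-line argument uses only the ordering, not symmetry, and is worth writing out explicitly rather than deferring to (1).

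In summary: the structural idea (translate to the $c_i$, use $a-2c_1\in H$, and exploit the maximality property of elements of $\calS$) is sound, but the direction of the maximality inequality is inverted, the surviving case is untreated, and Lemma~\ref{5.1}(6) --- the key ingredient the paper uses to manufacture the opposing inequality --- is absent. As written, the proof does not establish the lemma.
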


\begin{proof}
(1) Suppose $2b_2 \equiv b_1 + b_3 \mod e$. Then  $2b_1 -a = b_1 + b_3 -b_2 \equiv b_2 \mod e$. As $b_2 \not\in H$ but $b_1 + b_3 - b_2 \in H$, we have 
$
b_1 +b_3 - b_2 = b_2 + en
$
for some  $n \ge 1$. Hence $b_1 + b_3 - b_2 > b_2$, so that $b_1 > 2b_2 - b_3$. On the other hand, because $b_1 \not\in H$, $2b_2-b_3 \in H$ and $2b_2 -b_3 \equiv b_1 \mod e$, we get $2b_2 - b_3 > b_1$, which  is impossible.

(2) Assume $2b_1 \equiv b_2 + b_3 \mod e$. Then $b_2 + b_3 -b_1 \equiv b_1\mod e$. Since $b_2 + b_3 - b_1 \in H$ but $b_1 \not\in H$, we have $b_2 + b_3 -b_1 > b_1$, while $b_1>b_2>b_2+b_3-b_1$. This is absurd.
\end{proof}

\begin{prop}\label{5.3}
$e = a_1 \ge 8$.
\end{prop}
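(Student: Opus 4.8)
The plan is to push everything down to the additive group $\mathbb{Z}/e\mathbb{Z}$, $e=a_1$, and to show that if $e\le 7$ then the residues forced by Lemmas \ref{5.1} and \ref{5.2} cannot all be distinct. Write $\overline{n}$ for the class of an integer $n$ modulo $e$ and put $x=\overline{c_1}$ and $y=\overline{c_2}$; recall the standing choices $b_1=a-c_1$, $b_2=a-c_2$, $b_3=a-c_1-c_2$ and $0<c_1<c_2$.

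First I would record the elementary \emph{Frobenius principle}: if $n\in H$ and $n\equiv a\pmod e$, then $n>a$. Indeed, if $n\le a$ then $a-n=me$ for some integer $m\ge 0$; the case $m=0$ gives $a=n\in H$, which is absurd, and if $m\ge 1$ then $me=ma_1\in H$, whence $a=n+me\in H$, again absurd. Hence any element of $H$ strictly less than $a$ has class $\neq\overline a$. Applying this to $2b_1-a=a-2c_1$, $2b_2-a=a-2c_2$, $b_1+b_3-a=a-2c_1-c_2$ and $b_2+b_3-a=a-c_1-2c_2$ — all lying in $H$ by Lemma \ref{5.1} and all $<a$ since $c_1,c_2>0$ — gives
\[
2x\neq 0,\qquad 2y\neq 0,\qquad 2x+y\neq 0,\qquad x+2y\neq 0 \quad\text{in }\mathbb{Z}/e\mathbb{Z}.
\]
Translating Lemma \ref{5.2} through $b_i=a-c_i$ gives in addition $y\neq 2x$ and $x\neq 2y$. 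Finally, since by Fact \ref{4.4} the monomials $t^{a-s}$ with $s\in\mathcal S$ that generate $\rmK_R$ have pairwise incongruent exponents modulo $e$, the four minimal generators $1=t^{a-a}$, $t^{c_1}=t^{a-b_1}$, $t^{c_2}=t^{a-b_2}$, $t^{c_1+c_2}=t^{a-b_3}$ have pairwise incongruent exponents; that is, $0,x,y,x+y$ are pairwise distinct in $\mathbb{Z}/e\mathbb{Z}$, so in particular $x,y\neq 0$, $x\neq y$ and $x+y\neq 0$.

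It then remains to assemble these facts. The six classes $0,\,x,\,y,\,x+y,\,2x+y,\,x+2y$ are pairwise distinct (each required inequality reduces to one of $x\neq0$, $y\neq0$, $x\neq y$, $x+y\neq0$, $2x+y\neq0$, $x+2y\neq0$), and $2x$ is distinct from every one of them (here $2x\neq 0$, $2x\neq y$, and $2x\neq x+2y\Leftrightarrow x\neq 2y$ are used, the remaining comparisons reducing to $x\neq0$, $y\neq0$, $x\neq y$). Thus $\{0,x,y,x+y,2x+y,x+2y,2x\}$ has seven elements, so $e\ge 7$. By the symmetric bookkeeping $2y$ is likewise distinct from all of $0,x,y,x+y,2x+y,x+2y$; hence if $2x\neq 2y$ we obtain eight distinct classes and $e\ge 8$, while if $2x=2y$ with $x\neq y$ then $e$ must be even, which combined with $e\ge 7$ again yields $e\ge 8$. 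In either case $e=a_1\ge 8$, as desired.

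The only conceptual input is the Frobenius principle; everything afterward is a finite, routine check that the eight classes $0,x,y,x+y,2x,2y,2x+y,x+2y$ are distinct, the single possible coincidence $2x=2y$ being absorbed by a parity remark. I expect that bookkeeping — matching each forbidden coincidence to the congruence constraint that rules it out — to be the only step requiring any care.
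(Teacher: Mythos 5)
Your proof is correct and follows essentially the same route as the paper: both arguments reduce the problem to counting residue classes modulo $e$ among the eight quantities $a,b_1,b_2,b_3,2b_1-a,2b_2-a,b_1+b_3-a,b_2+b_3-a$ (your $0,x,y,x+y,2x,2y,2x+y,x+2y$ after translating by $-a$ and negating), first obtaining seven distinct classes and then ruling out $e=7$; your parity dichotomy ($2x\neq 2y$, or else $e$ even) is just a slightly more explicit way of handling the final coincidence that the paper dispatches by noting $7$ is odd. One small bookkeeping slip: in the parenthetical listing the ``building-block'' inequalities used to see $0,x,y,x+y,2x+y,x+2y$ are pairwise distinct, you must also include $2x\neq 0$ (for the pair $\{y,2x+y\}$) and $2y\neq 0$ (for $\{x,x+2y\}$); both are facts you established from Lemma~\ref{5.1} via the Frobenius principle, so nothing is missing from the argument, only from that parenthetical list.
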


\begin{proof}
Since $4 = \mu_R(\rmK_R) \le \e(R) - 1$, we have $e =\e(R) \ge 5$. We consider the numbers
$$
a, ~b_1, ~b_2, ~b_3, ~2b_2 -a, ~b_2+b_3-a, ~b_1 +b_3 -a, ~2b_1 -a.
$$
Lemmata \ref{5.1} and \ref{5.2} show that  
$
2b_2 - a = b_2 + b_3 - b_1, ~b_2 + b_3 -a, ~b_1+b_3 -a \in H.
$
Therefore these numbers are distinct modulo $e$. Because these three numbers are less than $b_3$, the numbers 
$$
a, ~b_1, ~b_2, ~b_3, ~2b_2 -a, ~b_2+b_3-a, ~b_1 +b_3 -a
$$
are distinct modulo $e$. Thus $e \ge 7$.

Suppose that $e = 7$. Then $2b_2 - a \not\equiv 2b_1 - a \mod7$. Lemma \ref{5.2} (1) shows that $2b_1 - a = b_1 + b_3 - b_2 \not\equiv b_2\mod 7$. We have by Lemma \ref{5.2} (2) that 
$
b_2 + b_3 -a ~\not\equiv~2b_1 -a \mod 7,
$ which guarantees the following eight numbers
$$
a, ~b_1, ~b_2, ~b_3, ~2b_2 -a, ~b_2+b_3-a, ~b_1 +b_3 -a, ~2b_1 -a
$$
are distinct modulo $7$. This is absurd. Hence $e=a_1 \ge 8$.
\end{proof}

The goal of this section is Theorem \ref{1.8}. Let us restate it in our context.

\begin{thm}\label{5.4}
Let $R=k[[t^{a_1}, t^{a_2}, \cdots, t^{a_{\ell}}]]$ be a numerical semigroup ring over a field $k$ and suppose that $e = a_1 \le 7$. Let $I$ be a monomial ideal of $R$. If $I \otimes_RI^{\vee}$ is torsionfree, then $I \cong R$ or $I \cong \rmK_R$ as an $R$-module.
\end{thm}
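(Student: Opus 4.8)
The plan is to reduce Theorem \ref{5.4} to the structural analysis already assembled in this section, namely the multiplicity estimate of Proposition \ref{5.3}. First I would perform the standard harmless reductions: since $I\otimes_RI^{\vee}$ torsionfree is preserved under the normalizations used throughout, I may assume $R\subseteq I\subseteq V$ after multiplying $I$ by a suitable power of $t$, and by Lemma \ref{2.1} the hypothesis says exactly that $t\colon I\otimes_RI^{\vee}\to\rmK_R$ is injective. Passing to $B=I:I$ via Lemma \ref{2.1a}, the map $t_B\colon I\otimes_B\Hom_B(I,\rmK_B)\to\rmK_B$ is an isomorphism, and by Proposition \ref{2.2} it is enough to show $I\cong B$ or $I\cong\rmK_B$ as $B$-modules. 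Here one wants to keep everything monomial: $B$ is again a (monomial) subring of $V$ of the form $k[[t^H{}']]$ for a numerical semigroup $H'$, with $\e(B)\le\e(R)\le 7$, so it suffices to prove the statement when $t\colon I\otimes_RI^{\vee}\to\rmK_R$ is itself an isomorphism and $I$ is not isomorphic to $R$ or $\rmK_R$, and derive a contradiction.

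So suppose $I\not\cong R$ and $I\not\cong\rmK_R$; equivalently, in the notation established after Fact \ref{4.4}, $r=\mu_R(I)-1\ge 1$ and $s=\mu_R(J)-1\ge 1$ with $J=\rmK_R:I$. The next step is to rule out the case $r=s=1$ directly: if $\mu_R(I)=\mu_R(J)=2$, then $\mu_R(\rmK_R)=(r+1)(s+1)=4$ and $IJ=\rmK_R$, so Condition \ref{5.0} holds, and Proposition \ref{5.3} forces $e=a_1\ge 8$, contradicting $e\le 7$. This disposes of the two-by-two case entirely. It remains to exclude the situation where $r\ge 2$ or $s\ge 2$.

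For that I would invoke assertion (1) of Theorem \ref{1.4}. Since $t\colon I\otimes_RI^{\vee}\to\rmK_R$ is an isomorphism and $\mu_R(I)=r+1$, $\mu_R(I^{\vee})=s+1$, if both $r+1\ge 2$ and $s+1\ge 2$ — which is our standing assumption — and moreover $\max(r+1,s+1)\ge 3$, then Theorem \ref{1.4}(1) gives $\e(R)>(\,\mu_R(I)+1)\mu_R(I^{\vee})\ge 3\cdot 2=6$, hence $\e(R)\ge 7$; and if $\e(R)=7$ exactly one must still check this is incompatible. In fact the inequality $\e(R)>(\mu_R(I)+1)\mu_R(I^{\vee})$ is strict, so $\mu_R(I)=2,\mu_R(I^{\vee})=2$ would already need $\e(R)\ge 7$, and $\{\mu_R(I),\mu_R(I^{\vee})\}=\{2,3\}$ or larger would need $\e(R)\ge 9$. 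So the only survivor with $\e(R)\le 7$ is $\mu_R(I)=\mu_R(I^{\vee})=2$ with $\e(R)=7$, and that was eliminated in the previous paragraph by Proposition \ref{5.3}. Therefore no such $I$ exists, and $I\cong R$ or $I\cong\rmK_R$.

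The main obstacle I anticipate is bookkeeping at the change-of-rings step: one must be sure that replacing $R$ by $B=I:I$ keeps $I$ a monomial ideal over a numerical semigroup ring with multiplicity still at most $7$, so that Proposition \ref{5.3} (which is stated for monomial ideals in numerical semigroup rings under Setting \ref{setting}) genuinely applies to the intermediate ring; since $B\subseteq V=k[[t]]$ is generated by monomials and $\e(B)=\mu_B(V)\le\mu_R(V)=\e(R)$, this is routine but needs to be said. A secondary point is to confirm that the hypotheses of Theorem \ref{1.4}(1) — namely that the canonical map is an isomorphism — hold after the reduction, which is exactly the content of Lemma \ref{2.1a} combined with the localization argument used in the proof of Theorem \ref{1.4}(2); so the real work is just assembling Proposition \ref{5.3} and Theorem \ref{1.4}(1) correctly, not any new computation.
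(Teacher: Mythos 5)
Your proposal is correct, and it follows the paper's overall scheme (pass to $B=I:I$ via Lemma \ref{2.1a}, verify $B$ is again a numerical semigroup ring with $\e(B)\le\e(R)\le 7$, invoke Proposition \ref{2.2} to descend, and in the end kill the two-by-two case with Proposition \ref{5.3}), but the middle step is genuinely different from the paper's. To exclude $\{\mu_B(I),\mu_B(I^\vee)\}\neq\{2,2\}$ you appeal to Theorem \ref{1.4}(1): since that inequality is strict, $(\mu_B(I)+1)\mu_B(I^\vee)\ge 8$ whenever both are $\ge 2$ and not both equal $2$, so $\e(B)\ge 9>7$, a contradiction, while in the $\{2,2\}$ case the same strict inequality forces $\e(B)>6$, hence $\e(B)=7$, which is exactly what Proposition \ref{5.3} rules out. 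The paper instead uses the type bound $\mu_B(I)\mu_B(I^\vee)=\rmr(B)\le\e(B)-1\le 6$ (\cite[Bemerkung 2.21 b)]{HK2}), leaving only $\rmr(B)\in\{4,6\}$, and disposes of $\rmr(B)=6$ by observing it forces $\e(B)=7$ and $\rmv(B)=\e(B)$, whence Theorem \ref{3.3a} (equivalently Corollary \ref{4.6}) applies. Your route bypasses Theorem \ref{3.3a} entirely at the cost of drawing on the full strength of Theorem \ref{1.4}(1); both are legitimate, and you were right to flag the bookkeeping at the change-of-rings step (that $B$ stays a numerical semigroup ring, $I$ stays monomial over $B$, and $\e(B)\le\e(R)$) as the point that must be spelled out for Proposition \ref{5.3} to apply to the intermediate ring.
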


\begin{proof}
Passing to the ring $B = I : I$, we may assume that the canonical map $t : I \otimes_R\Hom_R(I,\rmK_R) \to \rmK_R$ is an isomorphism. Suppose that $I \not\cong R$ and $I \not\cong \rmK_R$. Then 
$$
4 \le \mu_R(I){\cdot}\mu_R(I^\vee)= \mu_R(\rmK_R) = \rmr (R)\le e - 1 \le 6,
$$
(see \cite[Bemerkung 2.21 b)]{HK2}). If $\rmr (R) = 6$, then $\rmr (R) = e - 1$, so that $\fkm^2 = t^{e}\fkm$. In fact, since $t^eR$ is a reduction of $\fkm$, we get $e -1 = \ell_R(R/t^eR)-1 = \ell_R(\fkm/t^eR)$, while  $\rmr (R) = \ell_R((0):_{R/t^eR}\fkm)$. Therefore if $\rmr (R) = e - 1$, then $(0):_{R/t^eR}\fkm = \fkm/t^eR$, whence $\fkm^2 \subseteq t^eR$. Thus $\fkm^2 = t^e\fkm$, because $t^e \notin \fkm^2$. Consequently, if $\rmr (R) =6$, then $\rmv (R) =e = 7$, which violates Theorem \ref{3.3a} (and Corollary \ref{4.6} also), because $\rmr (R) = \mu_R(I){\cdot}\mu_R(I^\vee)$. Hence $\rmr (R) = 4$, so that $\mu_R(I)=\mu_R(I^\vee)=2$ which violates Proposition \ref{5.3}.
\end{proof}

\begin{cor}[{\cite[Main Theorem]{H1}}]\label{5.5}
Let $R$ be a Gorenstein numerical semigroup ring with $\e(R) \le 7$ and let $I$ be a monomial ideal in $R$. If $I \otimes_R\Hom_R(I,R)$ is torsionfree, then $I$ is a principal ideal.\end{cor}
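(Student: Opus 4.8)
The corollary is the Gorenstein specialization of Theorem \ref{5.4}, so the plan is simply to deduce it from that theorem. When $R$ is Gorenstein, its canonical module $\rmK_R$ is isomorphic to $R$ itself, and under such an isomorphism the ideal-theoretic operations match up: $\Hom_R(I,R)\cong\Hom_R(I,\rmK_R)=I^\vee$ canonically. Hence the hypothesis that $I\otimes_R\Hom_R(I,R)$ is torsionfree is literally the hypothesis of Theorem \ref{5.4}, once we observe $\e(R)=e=a_1\le 7$ in the numerical-semigroup setting of Setting \ref{setting}.

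First I would invoke Theorem \ref{5.4} to conclude that $I\cong R$ or $I\cong\rmK_R$ as an $R$-module. In the Gorenstein case both alternatives say the same thing: $\rmK_R\cong R$, so in either case $I$ is isomorphic to $R$, i.e. $I$ is a free $R$-module of rank one. Finally I would translate "free rank-one fractional/monomial ideal" into "principal ideal": if $I\cong R$ via an $R$-linear map $R\to I$ sending $1$ to some $x\in I$, then $I=xR$, so $I$ is principal. (If one prefers to work with $I$ an honest ideal of $R$ rather than a fractional ideal, the same argument applies verbatim.)

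There is essentially no obstacle here beyond bookkeeping: the only point to be careful about is making sure the two canonical identifications — $\rmK_R\cong R$ for Gorenstein $R$, and the resulting $\Hom_R(I,R)\cong\Hom_R(I,\rmK_R)$ — are compatible with the torsionfreeness hypothesis, which they are since these are isomorphisms of $R$-modules and torsionfreeness is preserved under isomorphism. Thus the corollary follows immediately.

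\begin{proof}
Since $R$ is Gorenstein, we have $\rmK_R\cong R$ as an $R$-module, and consequently $\Hom_R(I,R)\cong\Hom_R(I,\rmK_R)=I^\vee$ as $R$-modules; hence $I\otimes_R\Hom_R(I,R)\cong I\otimes_RI^\vee$. In the setting of Theorem \ref{5.4} we have $\e(R)=a_1\le 7$, so the hypothesis that $I\otimes_R\Hom_R(I,R)$ is torsionfree gives that $I\otimes_RI^\vee$ is torsionfree. By Theorem \ref{5.4}, $I\cong R$ or $I\cong\rmK_R$ as an $R$-module; since $\rmK_R\cong R$, in either case $I$ is a free $R$-module of rank one. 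Writing an isomorphism $R\xrightarrow{\ \sim\ }I$ which carries $1$ to $x\in I$, we obtain $I=xR$. Therefore $I$ is a principal ideal.
\end{proof}
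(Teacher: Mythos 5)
Your proof is correct and is essentially the argument the paper intends (the paper gives no explicit proof of Corollary \ref{5.5}, treating it as an immediate consequence of Theorem \ref{5.4}). You correctly identify the two needed observations — that $\rmK_R\cong R$ for Gorenstein $R$, so the two alternatives in Theorem \ref{5.4} collapse to $I\cong R$, and that a rank-one free ideal is principal — and no more is required.
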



\section{How to compute the torsion part $\rmT(I\otimes_RJ)$} 

Let $R$ be a Cohen--Macaulay local ring with $\dim R = 1$  and $F = \rmQ(R)$ the total ring of fractions. Let $I$ be a fractional ideal of $R$ and assume that $\mu_R(I) = 2$. In this section let us note an elementary  method to compute the torsion part $\rmT(I\otimes_RJ)$ of the tensor product $I\otimes_RJ$, where $J$ is a fractional ideal of $R$.  We need the method in Section 7 to explore concrete examples.

Let $f \in F$ and assume that $f \not\in R$. Let   $I= (1, f)~(= R + Rf)$ and choose a non-zerodivisor $\rho \in R$ so that $\rho I \subseteq R$. We set
$$
I' = \rho I,\quad\mathbf{a} =
\textstyle\binom{-f}{1}
\in F^2\quad\text{and}\quad R:I = \{ x \in F \mid xI \subseteq R \}.
$$
Recall that $R:I$ is also a fractional ideal of $R$ and $R:I \cong \Hom_R(I, R)$ as an $R$-module. 

Let 
$\varepsilon : R^2 \to I$ be the $R$-linear map defined by $\varepsilon
(\binom{a}{b})=a + bf$
for each $\binom{a}{b}\in R^2$. We then have the following.

\begin{fact}\label{6.1}
$\Ker~\varepsilon = \{ b \mathbf{a} \mid b \in R:I\} \cong R:I$.
\end{fact}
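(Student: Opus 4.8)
The plan is to verify the two inclusions that together describe $\Ker\varepsilon$, and then read off the claimed isomorphism.

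First I would record two elementary consequences of $I = (1,f) = R + Rf$: since $1 \in I$, every $x \in R:I$ satisfies $x = x\cdot 1 \in R$, so in particular $R:I \subseteq R$; and since $f \in I$, every such $x$ also has $xf \in R$. Consequently, for $b \in R:I$ the vector $b\mathbf{a} = \binom{-bf}{b}$ genuinely lies in $R^2$ (not merely in $F^2$), and $\varepsilon(b\mathbf{a}) = -bf + bf = 0$. This gives the inclusion $\{\, b\mathbf{a} \mid b \in R:I \,\} \subseteq \Ker\varepsilon$.

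For the reverse inclusion I would start from an arbitrary $\binom{a}{b} \in \Ker\varepsilon$, so that $a + bf = 0$ in $F$, i.e. $a = -bf$; hence $\binom{a}{b} = b\binom{-f}{1} = b\mathbf{a}$. It then remains to check $b \in R:I$: the second coordinate shows $b \in R$, the first shows $bf = -a \in R$, and therefore $bI = bR + bfR \subseteq R$. This proves $\Ker\varepsilon \subseteq \{\, b\mathbf{a} \mid b \in R:I \,\}$, so the two sets coincide.

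Finally, the assignment $b \mapsto b\mathbf{a}$ defines an $R$-linear map $R:I \to \Ker\varepsilon$ which is surjective by the computation above and injective because the second coordinate of $\mathbf{a}$ equals $1$ (so $b\mathbf{a} = 0$ forces $b = 0$); hence $\Ker\varepsilon \cong R:I$ as $R$-modules. There is no real obstacle here; the only point deserving a line of care is confirming that $b\mathbf{a} \in R^2$ when $b \in R:I$, which is exactly the membership $b \in R:I$ tested against the generators $1$ and $f$ of $I$.
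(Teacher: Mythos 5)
Your proof is correct and complete. The paper records this as a \emph{Fact} with no written proof, and your verification is the natural one; in particular you correctly isolate the one point that needs a moment's care, namely that $b\mathbf{a}$ lies in $R^2$ (not just $F^2$) when $b\in R:I$, which follows from testing $b$ against the generators $1,f\in I$.
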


\noindent
Let $s = \mu_R(R:I)$ and write $R:I = (b_1,b_2, \ldots, b_s)$. We consider the exact sequence 
$$
R^{s} \stackrel{\Bbb M} \longrightarrow R^2 \xrightarrow{\tau = \rho [1, f]}R \longrightarrow R/{I'} \longrightarrow 0,
$$
where ${\Bbb M} = \left(
\begin{smallmatrix}
-b_1f & -b_2f & \cdots & -b_{s}f \\
b_1 & b_2 & \cdots & b_s
\end{smallmatrix}\right)$.
We now take an arbitrary fractional ideal $J$ of $R$. Then the homology $\rm{H}({\calC}) = {\rm Z}({\calC})/ {\rm B}({\calC})$ 
of the complex
$$
{\calC}: \ \ J^{\oplus s} \overset{\Bbb M}{\longrightarrow} J^{\oplus 2} \overset{\rho[1, f]}{\longrightarrow} J$$ gives $\Tor^R_1(R/{I'}, J)$ and since
$$
{\rm Z}({\calC}) = \{c \mathbf{a} \mid c \in J:I \} \cong J:I\ \ \text{and}\ \ 
{\rm B}({\calC}) = \{ c \mathbf{a} \mid c \in (R:I)J \} \cong (R:I)J,
$$
we have the isomorphism of $R$-modules

\begin{fact}\label{6.2}
$\Tor^R_1(R/{I'}, J) \cong (J:I)/{(R:I)J}$.
\end{fact}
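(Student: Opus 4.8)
The strategy is to compute $\Tor^R_1(R/I', J)$ directly from a free resolution of $R/I'$ by interpreting the cycles and boundaries of the tensored complex as concrete fractional ideals inside $F$. First I would record the presentation of $R/I'$: since $I = (1,f)$ with $\rho I = I' \subseteq R$ and $\tau = \rho[1,f] \colon R^2 \to R$ has image $I'$, the kernel of $\tau$ is $\Ker\varepsilon = \{b\mathbf{a} : b \in R:I\}$ by Fact~\ref{6.1} (note $\Ker\tau = \Ker\varepsilon$ since $\rho$ is a non-zerodivisor). Writing $R:I = (b_1,\dots,b_s)$, the matrix $\mathbb{M}$ whose columns are $b_1\mathbf{a},\dots,b_s\mathbf{a}$ surjects $R^s$ onto $\Ker\tau$, so $R^s \xrightarrow{\mathbb{M}} R^2 \xrightarrow{\tau} R \to R/I' \to 0$ is the truncation of a free resolution at homological degree $1$.

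Next I would apply $-\otimes_R J$ and identify $\Tor^R_1(R/I',J)$ with the homology at the middle spot of the complex $\calC\colon J^{\oplus s}\xrightarrow{\mathbb{M}} J^{\oplus 2}\xrightarrow{\rho[1,f]} J$. The key computation is to describe $\mathrm{Z}(\calC)$ and $\mathrm{B}(\calC)$ as fractional ideals. For the cycles: an element $\binom{c_1}{c_2}\in J^{\oplus 2}$ lies in $\mathrm{Z}(\calC)$ iff $\rho(c_1+c_2 f)=0$, i.e.\ $c_1 = -c_2 f$, so $\binom{c_1}{c_2}=c_2\mathbf{a}$ with $c_2\in J$ and $c_2 f \in J$ (so that $c_1\in J$); that is, $c_2 \in J$ and $c_2 f\in J$, which—together with the fact that $J$ is an $R$-module so $c_2\cdot 1\in J$ automatically—says precisely $c_2 I\subseteq J$, i.e.\ $c_2\in J:I$. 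Hence $\mathrm{Z}(\calC)=\{c\mathbf{a}: c\in J:I\}\cong J:I$. For the boundaries: the image of $\mathbb{M}$ on $J^{\oplus s}$ is $\{\sum_i x_i (b_i\mathbf{a}) : x_i\in J\} = \{c\mathbf{a} : c\in (R:I)J\}\cong (R:I)J$, using that $(R:I)J$ is exactly the $R$-submodule of $F$ generated by the products $b_i x$. Both identifications use that the map $c\mapsto c\mathbf{a}$ is injective (as $\mathbf{a}$ has a nonzero coordinate), so $\mathrm{Z}(\calC)/\mathrm{B}(\calC)\cong (J:I)/(R:I)J$ as $R$-modules.

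I expect the main (mild) obstacle to be bookkeeping about what "fractional ideal" membership means when coordinates are multiplied by $f\notin R$: one must check carefully that $c\mathbf{a}\in J^{\oplus 2}$ forces both $c\in J$ and $cf\in J$, and then that this pair of conditions is equivalent to $c\in J:I=\{x\in F : xI\subseteq R\}\cdot$(rescaled)\,—more precisely $c\in J:I := \{x\in F: xI\subseteq J\}$, consistent with the notation $\rmK_R:I$ used elsewhere. There is also the routine point that $(R:I)\cdot J$, as computed inside $F$, agrees with the image of the multiplication map $R:I \otimes_R J \to F$, so that the boundary submodule is literally this product ideal. Once these identifications are in place, the isomorphism of Fact~\ref{6.2} is immediate, and combined with Fact~\ref{6.1} and Lemma~\ref{2.1} it yields a practical recipe for $\rmT(I\otimes_R J)$: since $\rmT(I\otimes_R J)$ is the kernel of the natural map $I\otimes_R J\to IJ$, and this kernel is detected by $\Tor^R_1(R/I',J)\cong (J:I)/(R:I)J$ after clearing denominators, one simply computes the two monomial ideals $J:I$ and $(R:I)J$ and compares them.
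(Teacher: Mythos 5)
Your proposal takes exactly the same route as the paper: form the presentation $R^{s} \xrightarrow{{\Bbb M}} R^{2} \xrightarrow{\tau} R \to R/I' \to 0$ from Fact~\ref{6.1}, tensor with $J$, and identify ${\rm Z}(\calC) \cong J:I$ and ${\rm B}(\calC) \cong (R:I)J$ via the injection $c \mapsto c\mathbf{a}$. The paper asserts these identifications without elaboration, and your careful unpacking of the condition $c\mathbf{a} \in J^{\oplus 2} \Leftrightarrow cI \subseteq J$ (using that $\rho$ is a non-zerodivisor to pass from $\rho(c_1+c_2f)=0$ to $c_1=-c_2f$, and that $I=R+Rf$) is precisely the justification the paper leaves implicit.
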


We now consider the canonical isomorphisms
$$
\xi : J \to R\otimes_RJ, ~j \mapsto 1 \otimes j, \ \ \  
\eta : J^{\oplus 2} \to R^2 \otimes_R J,~\textstyle\binom{x}{y}\mapsto \textstyle\binom{1}{0}\otimes x + \binom{0}{1}\otimes y
$$
and consider the following diagram
\[
\xymatrix{
 & & 0& \\
 0 \ar[r] & \Tor^R_1(R/{I'}, J) \ar[r] & I' \otimes_R J \ar[r]^{\iota \otimes 1_J} \ar[u]& R \otimes_R J \ar[r] & R/{I'}\otimes_R J \ar[r] & 0 \\
 & &  & J \ar[u]_{\xi} & & \\
 & & R^2\otimes_R J \ar[uu]^{\rho \varepsilon \otimes_R 1_J} \ar[ruu]^{\tau \otimes_R 1_J} & J^{\oplus 2} \ar[l]_{\eta} \ar[u]_{\rho [1, f]}& \\
 & & R^{s}\otimes_R J \ar[u]^{{\Bbb M}\otimes_R 1_J}  & \\}
\] where the first row is exact and induced from  the short exact sequence $0 \to {I'} \overset{\iota}{\to} R \to R/{I} \to 0$ ($\iota$ denotes the embedding). We then have $\Tor^R_1(R/I', J) \cong \rmT(I' \otimes_R J)$ and
$\eta:
J^{\oplus 2} \to R^2\otimes_R J, \ \ c \mathbf{a} \longmapsto \textstyle\binom{1}{0}
\otimes (-c f) + \textstyle\binom{0}{1}
\otimes c
$
for each $c \in J:I$. Consequently, since $I \cong I' = \rho I$, we get the following isomorphism

\begin{prop}\label{6.3}
$(J:I)/(R:I)J \cong \rmT(I \otimes_R J),\ \ \overline{c} \longmapsto f\otimes c - 1 \otimes c f$ 
\end{prop}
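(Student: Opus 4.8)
The plan is to assemble the statement from the facts already established---Facts~\ref{6.1} and~\ref{6.2} and the commutative diagram above---by a short diagram chase. Since $I \cong I' = \rho I$ as $R$-modules, it suffices to produce an isomorphism $(J:I)/(R:I)J \cong \rmT(I' \otimes_R J)$ and then transport it along the isomorphism $I' \otimes_R J \cong I \otimes_R J$ induced by the map $I' \to I$, $\rho i \mapsto i$. The first point is that the connecting homomorphism identifies $\Tor^R_1(R/I', J)$ with $\rmT(I' \otimes_R J)$: tensoring $0 \to I' \xrightarrow{\iota} R \to R/I' \to 0$ with $J$ and using $\Tor^R_1(R, J) = 0$ shows $\Tor^R_1(R/I', J) = \Ker(\iota \otimes 1_J \colon I' \otimes_R J \to R \otimes_R J)$, which is exactly the top row of the diagram. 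Because $I'$ contains the non-zerodivisor $\rho$, one has $F \otimes_R I' = F$, so $\Ker(\iota \otimes 1_J)$ vanishes after tensoring with $F$ over $R$ and hence is a torsion submodule of $I'\otimes_R J$; conversely $R \otimes_R J \cong J$ is torsionfree, so $\rmT(I' \otimes_R J)$ is annihilated by $\iota \otimes 1_J$. Thus the two submodules coincide.

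Next I would compute $\Tor^R_1(R/I', J)$ from the free presentation $R^s \xrightarrow{\Bbb M} R^2 \xrightarrow{\tau} R \to R/I' \to 0$, whose exactness rests on Fact~\ref{6.1} (the columns of $\Bbb M$ are $b_i\mathbf{a}$ and $\Ker\varepsilon = (R:I)\mathbf{a}$). Tensoring this presentation with $J$ produces the complex $\calC$, and the descriptions $\mathrm{Z}(\calC) = \{c\mathbf{a} \mid c \in J:I\}$ and $\mathrm{B}(\calC) = \{c\mathbf{a} \mid c \in (R:I)J\}$ give $\Tor^R_1(R/I', J) = \mathrm{Z}(\calC)/\mathrm{B}(\calC) \cong (J:I)/(R:I)J$, which is the content of Fact~\ref{6.2}; note that the resulting correspondence is automatically well defined, since two lifts of a class $\overline c$ differ by an element of $(R:I)J$, i.e.\ by a boundary.

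It then remains to trace the explicit formula. A class $\overline c \in (J:I)/(R:I)J$ is represented by the cycle $c\mathbf{a} = \binom{-cf}{c} \in \mathrm{Z}(\calC) \subseteq J^{\oplus 2}$; by commutativity of the lower part of the diagram (and injectivity of $\xi$), $\eta$ carries it into $\Ker(\tau \otimes 1_J)$, namely to $\binom{1}{0}\otimes(-cf) + \binom{0}{1}\otimes c \in R^2 \otimes_R J$. Applying $\rho\varepsilon \otimes 1_J$, which represents the connecting homomorphism $\Tor^R_1(R/I',J) \hookrightarrow I'\otimes_R J$, sends this to $\rho \otimes (-cf) + \rho f \otimes c \in I'\otimes_R J$, and the isomorphism $I'\otimes_R J \cong I\otimes_R J$ above transforms it into $-1\otimes cf + f\otimes c$. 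This exhibits $\overline c \mapsto f\otimes c - 1\otimes cf$ as the composite of the isomorphisms $(J:I)/(R:I)J \cong \Tor^R_1(R/I', J) \cong \rmT(I'\otimes_R J) \cong \rmT(I\otimes_R J)$, as claimed.

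The step that carries the real content is the identification $\Ker(\iota\otimes 1_J) = \rmT(I'\otimes_R J)$: one inclusion uses that $J$ is a torsionfree $R$-module (it is a submodule of $F$), and the other uses that $I'$ is faithful, so that extension of scalars to $F$ detects the torsion of $I'\otimes_R J$ precisely. Everything else is bookkeeping through the two identification isomorphisms $\eta$, $\xi$ and the rescaling $I' = \rho I$, and the diagram has been arranged exactly so that these steps commute.
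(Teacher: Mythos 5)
Your proof is correct and follows the same route the paper intends: you use the long exact sequence from $0 \to I' \to R \to R/I' \to 0$ tensored with $J$ to identify $\Tor_1^R(R/I',J)$ with $\Ker(\iota\otimes 1_J)$, identify that kernel with $\rmT(I'\otimes_R J)$ (one inclusion from torsionfreeness of $J$, the other from faithfulness of $I'$ after base change to $F$), invoke Fact~\ref{6.2} to compute the Tor, trace the connecting map through $\eta$ and $\rho\varepsilon\otimes 1_J$, and finally rescale by $\rho^{-1}$ to pass from $I'$ to $I$. The only difference is that you spell out the argument for $\Tor_1^R(R/I',J)\cong\rmT(I'\otimes_R J)$, which the paper merely asserts; this is a welcome clarification, not a divergence.
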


\noindent
of $R$-modules, where  $\overline{c}$ denotes  the image of $c \in J:I$ in $(J:I)/(R:I)J$. In particular, setting $J = R:I$, we get the following.

\begin{cor}\label{b}
$\rmT(I\otimes_R(R:I)) \cong (R:I)^2/(R:I^2)$ as an $R$-module. 
\end{cor}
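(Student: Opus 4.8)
The plan is to obtain Corollary \ref{b} as a direct specialization of Proposition \ref{6.3}, taking $J = R:I$. Recall that $R:I$ is itself a fractional ideal of $R$, hence an admissible choice for the arbitrary fractional ideal $J$ appearing in that proposition. Substituting $J = R:I$ into the isomorphism of Proposition \ref{6.3} yields an isomorphism of $R$-modules
$$
\rmT(I\otimes_R(R:I)) \;\cong\; \bigl((R:I):I\bigr)\big/\bigl((R:I)(R:I)\bigr),
$$
so it remains only to rewrite the numerator and denominator on the right-hand side in terms of $I$ alone.

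The denominator is, by the definition of the product of fractional ideals, $(R:I)\cdot(R:I) = (R:I)^2$. For the numerator I would invoke the elementary colon-ideal identity $(R:I):I = R:I^2$. To justify it: for $x \in F$ we have $x \in (R:I):I$ if and only if $xI \subseteq R:I$, i.e. if and only if $xyz \in R$ for all $y, z \in I$; since $I^2$ is by definition the $R$-submodule of $F$ generated by the products $yz$ with $y,z\in I$, this last condition is precisely $xI^2 \subseteq R$, that is $x \in R:I^2$. Combining the two simplifications with the displayed isomorphism gives $\rmT(I\otimes_R(R:I)) \cong (R:I)^2/(R:I^2)$, as asserted.

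There is essentially no obstacle here: the statement is a formal consequence of Proposition \ref{6.3}, and the only point needing a line of argument is the colon-ideal identity, which is routine. The one thing worth keeping in view is the standing hypothesis $\mu_R(I) = 2$ in force throughout this section, under which Proposition \ref{6.3} was proved via the explicit complex $\calC$; this is exactly what makes the substitution $J = R:I$ legitimate, and no further restriction on $R$ or on $I$ is required.
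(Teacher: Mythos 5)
Your approach is exactly the paper's: specialize Proposition \ref{6.3} with $J = R:I$, identify $(R:I):I$ with $R:I^2$, and identify $(R:I)J$ with $(R:I)^2$. One remark, though: your derivation correctly produces
\[
\rmT(I\otimes_R(R:I)) \;\cong\; (R:I^2)\big/(R:I)^2,
\]
yet your closing sentence (following the corollary as printed) transposes numerator and denominator to $(R:I)^2/(R:I^2)$. That transposition is a typo in the paper: the containment runs $(R:I)^2 \subseteq R:I^2$ (if $x,y \in R:I$ then $xy\,I^2 = (xI)(yI) \subseteq R$), not the other way, so only $(R:I^2)/(R:I)^2$ is a meaningful quotient module; and indeed the proof of Example \ref{6.4} invokes $(R:I^2)/(R:I)^2 \neq (0)$. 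Your final line should match your own display rather than the misprinted statement.
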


Let us examine a concrete example to test Corollary \ref{b}.

\begin{ex}\label{6.4}
We consider $H=\left<8,11,14,15\right>$ and $R=k[[t^8, t^{11}, t^{14}, t^{15}]]$. We take $I=(1,t)$. Then $R:I=(t^{14},t^{15}, t^{24}, t^{27})$ and $R:I^2=(t^{14}, t^{23}, t^{24}, t^{26}, t^{27})$. Since $(R:I)^2 = (t^{28}, t^{29}, t^{30}, t^{38})$, we have $t^{14} \notin (R:I)^2$, so that $I\otimes_R (R:I)$ has a non-trivial torsion $t\otimes t^{14} - 1 \otimes t^{15}$.
\end{ex}

\begin{proof}
The figure of $H$ is the following (the gray part).
\begin{center}
\begin{tabular}{cccccccc}
 &  & \\ 
\multicolumn{1}{>{\columncolor[gray]{0.8}}c}{0} & 1 & 2 & 3 & 4 & 5 & 6 & 7  \\ 
\multicolumn{1}{>{\columncolor[gray]{0.8}}c}{8} & 9 & 10 & \multicolumn{1}{>{\columncolor[gray]{0.8}}c}{11} & 12 & 13 & \multicolumn{1}{>{\columncolor[gray]{0.8}}c}{14} & \multicolumn{1}{>{\columncolor[gray]{0.8}}c}{15}  \\ 
\multicolumn{1}{>{\columncolor[gray]{0.8}}c}{16} & 17 & 18 & \multicolumn{1}{>{\columncolor[gray]{0.8}}c}{19} & 20 & 21 & \multicolumn{1}{>{\columncolor[gray]{0.8}}c}{22} & \multicolumn{1}{>{\columncolor[gray]{0.8}}c}{23}  \\ 
\multicolumn{1}{>{\columncolor[gray]{0.8}}c}{24} & \multicolumn{1}{>{\columncolor[gray]{0.8}}c}{25} & \multicolumn{1}{>{\columncolor[gray]{0.8}}c}{26} & \multicolumn{1}{>{\columncolor[gray]{0.8}}c}{27} & \multicolumn{1}{>{\columncolor[gray]{0.8}}c}{28} & \multicolumn{1}{>{\columncolor[gray]{0.8}}c}{29} & \multicolumn{1}{>{\columncolor[gray]{0.8}}c}{30} & \multicolumn{1}{>{\columncolor[gray]{0.8}}c}{31}  \\
\multicolumn{1}{>{\columncolor[gray]{0.8}}c}{32} & \multicolumn{7}{>{\columncolor[gray]{0.8}}c}{$\cdots$} \\
\end{tabular}
\end{center}
We have $c = 22$ and $a=21$. Since
$
R:I =(t^n \mid n \in H ~\text{such that}~n+1 \in H),
$
we get 
$R:I = (t^{14}, t^{15}) + {\c}$, where $\c = (t^n \mid n \ge 22)$. On the other hand, because $R:I^2 = (t^n \mid n \in H~\text{such that}~n+1, n+2 \in H)$, we have $R:I^2 = (t^{14}) + {\c}$. Hence $R:I = (t^{14}, t^{15}, t^{24}, t^{27})$ and $R:I^2 = (t^{14}, t^{23}, t^{24}, t^{26}, t^{27})$. Because $t^{14} \notin t^{28}V$ and $(R:I)^2 \subseteq t^{28}V$, we have $(R:I^2)/(R:I)^2 \neq (0)$ and Proposition  \ref{6.3} shows $I \otimes_R (R:I)$ has torsion $t\otimes t^{14} -1\otimes t^{15} \ne 0$.
\end{proof}


\section{Examples}

When $e=a_1 = 8$, there exists monomial ideals $I$ for which Condition \ref{5.0} is satisfied. However, as far as we know, for these ideals $I$ the $R$-modules $I\otimes_R I^{\vee}$ have non-zero torsions. Let us explore one example.

\begin{ex}\label{7.1}
We consider $H=\left<8,11,14,15\right>$ and  $R = k[[t^8,t^{11}, t^{14}, t^{15}]]$. Then $\rmK_R=(1, t, t^3,t^4)$. We take $I = (1,t)$ and set $J = \rmK_R : I$. Then $J= (1, t^3)$ and 
$$
\rmT(I\otimes_R J) = R(t\otimes t^{16}-1\otimes t^{17}) \cong R/\m.
$$
\end{ex}

\begin{proof}
Since ${\calS}=\{21, 20, 18,17,7,6,3\}$, we have $\rmK_R=\sum_{s \in S}Rt^{21-s} = (1, t, t^3, t^4)$. Let $I=(1, t)$. Then
$$
J = \rmK_R:I = (t^n \mid n \in {\Bbb Z}~\text{such~that}~21-n, 20-n \notin H),
$$
so that $J= (1, t^3)$. Hence $IJ= \rmK_R$, $\mu_R(I) = \mu_R(J) = 2$ and $\mu_R(\rmK_R)=4$, so that Condition \ref{5.0} is satisfied. 
Because
$$
R:I=(t^n\mid n\in {H}\ \text{and}\  n+1 \in H)\ \ \text{and}
$$
$$
J:I=(t^n\mid n\in {\Bbb Z}~\text{such that}~21-n,20-n, 19-n \notin H),
$$
we have
$$
R:I=(t^{14}, t^{15}, t^{24}, t^{27}),\ \ J:I=(t^{14}, t^{15}, t^{16}, t^{17}, t^{18}), \ \ (R:I)J=(t^{14}, t^{15}, t^{17}, t^{24}, t^{27}).
$$
Therefore $t^{16} \notin (R:I)J$ and $\m{\cdot}t^{16} \subseteq (R:I)J$. Thus Fact \ref{6.2} shows 
$$
\rmT(I\otimes_R J) \cong (J:I)/(R:I)J = R{\overline{t^{16}}} \cong R/\m, 
$$
where $\overline{t^{16}}$ is the image of $t^{16}$ in $(J:I)/(R:I)J$. Hence $0 \neq t \otimes t^{16}- 1\otimes t^{17} \in \rmT(I\otimes_R J)$ and 
$
\rmT(I\otimes_R J) \cong R/\m
$
as an $R$-module.
\end{proof}

\begin{rem}\label{7.2}
The ring $R$ of Example \ref{7.1} contains no monomial ideals $I$ such that $I \ncong R, I \ncong \rmK_R$, and $I\otimes_R I^{\vee}$ is torsionfree. 
\end{rem}

The following ideals also satisfy  Condition \ref{5.0} but $I\otimes_R I^{\vee}$ is not torsionfree. In fact, the semigroup rings of these numerical semigroups  contain no monomial ideals $I$ such that $I \ncong R, I \not\cong \rmK_R$, and $I\otimes_R I^{\vee}$ is torsionfree. 
\begin{enumerate}[$(1)$]
\item $H=\left<8,9,10,13\right>, \rmK_R=(1, t, t^3, t^4), I = (1, t)$.
\item $H=\left<8,11,12,13\right>, \rmK_R=(1, t, t^3, t^4), I = (1, t)$.
\item $H=\left<8,11,14,23\right>, \rmK_R=(1, t^3, t^9, t^{12}), I = (1, t^3)$.
\item $H=\left<8,13,17,18\right>, \rmK_R=(1, t, t^5, t^6), I = (1, t)$.
\item $H=\left<8,13,18,25\right>, \rmK_R=(1, t^5,t^7, t^{12}), I = (1, t^5)$.
\end{enumerate}

If $a_1 \ge 9$, then Theorem \ref{5.4} is no
longer true in general. Let us note one example. 

\begin{ex}\label{7.3}
Let $H=\left<9,10,11,12,15\right>$. Then $R=k[[t^9, t^{10}, t^{11}, t^{12}, t^{15}]]$. We have $\rmK_R =(1, t, t^3, t^4)$. Let $I=(1, t)$ and put $J=\rmK_R:I$. Then $J=(1, t^3)$, $\mu_R(I)= \mu_R(J)=2$, and $\mu_R(\rmK_R)=4$. We have $R:I=(t^9, t^{10},t^{11})$, $J:I=(t^9, t^{10}, t^{11}, t^{12}, t^{13}, t^{14})$, and $(R:I)J = J:I$, so that Proposition \ref{6.3} guarantees that $I\otimes_R I^\vee$ is torsionfree.
\end{ex}

\begin{ac}
The authors thank Olgur Celikbas for valuable information.
\end{ac}


\end{document}